\newcommand{\id}{\mathds{1}}
 \newcommand{\DD}{\mathbb{D}}
 \newcommand{\EE}{\mathbb{E}}
 \newcommand{\YY}{\mathbb{Y}}
 \newcommand{\XX}{\mathbb{X}}
 \newcommand{\coh}[1]{\mathrm{coh}(#1)}
 \newcommand{\tauX}{\tau_\XX}
 \newcommand{\vc}{\vec{c}}
 \newcommand{\vect}[1]{\mathrm{vect}(#1)}
 \newcommand{\svect}[1]{\mathrm{\underline{vect}}(#1)}
 \newcommand{\MFgr}[3]{\mathrm{MF}^{#1}(#2,#3)}
 \newcommand{\sMFgr}[3]{\mathrm{\underline{MF}}^{#1}(#2,#3)}
 \newcommand{\vom}{{\vec\omega}}
 \newcommand{\vdom}{\vec{\delta}}
 \newcommand{\vx}{{\vec{x}}}
 \newcommand{\vy}{{\vec{y}}}
 \newcommand{\vu}{{\vec{u}}}
 \newcommand{\vz}{{\vec{z}}}
  \newcommand{\vw}{{\vec{\omega}}}
  \newcommand{\vnull}{\vec{0}}
 \renewcommand{\deg}{\mathrm{deg}\,}
 \newcommand{\rk}{\mathrm{rk}}
 \newcommand{\LL}{\mathbb{L}}
 \newcommand{\Ll}{\mathcal{L}}
 \newcommand{\Hom}{\mathrm{Hom}}
 \newcommand{\Ext}{\mathrm{Ext}}
 \newcommand{\HomX}{\mathrm{Hom_{\XX}}}
 \newcommand{\projgr}[2]{\mathrm{proj}^{#1}(#2)}
 \newcommand{\cok}[1]{\mathrm{cok}(#1)}
 \newcommand{\up}[1]{\stackrel{#1}\longrightarrow}
 \renewcommand{\phi}{\varphi}
 \newcommand{\bP}{\bar{P}}
 \newcommand{\bphi}{\bar{\phi}}
 \newcommand{\bpsi}{\bar{\psi}}
 \newcommand{\wtilde}{\widetilde}
 \newcommand{\La}{\Lambda}
 \newcommand{\rla}{\rightleftarrows}
   \newcommand{\mf}[2]{{\underset{#2}{\overset{#1}{\rla}}}}
 \newcommand{\extb}[2]{E_{#1}\langle#2\rangle}
 \newcommand{\extbo}[1]{E\langle#1\rangle}
 \newcommand{\la}{\lambda}
 \newcommand{\ra}{\rightarrow}
 \newcommand{\lra}{\longrightarrow}
 \newcommand{\ZZ}{\mathbb{Z}}
 \newcommand{\QQ}{\mathbb{Q}}
 \newcommand{\End}{\mathrm{End}}
 \newcommand{\Oo}{\mathcal{O}}
 \renewcommand{\mod}[1]{\mathrm{mod}(#1)}
 \newcommand{\modgr}[2]{\mathrm{mod}^{#1}(#2)}
 \newcommand{\modgrnull}[2]{\mathrm{mod}_0^{#1}(#2)}
 \newcommand{\CMgr}[2]{{\mathrm{CM}}^{#1}(#2)}
 \newcommand{\sCMgr}[2]{{\mathrm{\underline{CM}}}^{#1}(#2)}
 \newcommand{\Singgr}[2]{\mathrm{Sing}^{#1}(#2)}
 \newcommand{\Knull}{\mathrm{K}_0}
 \newcommand{\Uu}{{\mathcal U}}
 \newcommand{\ExtX}{\mathrm{Ext}^1_{\XX}}
 \newcommand{\ih}[1]{\mathfrak{I}(#1)}
 \newcommand{\pc}[1]{\mathfrak{P}(#1)}
 \def\c{\circ}
\def\b{\bullet}
\newcommand{\bc}[1]{\underset{#1}{\c}}
\newcommand{\tc}[1]{\overset{#1}{\c}}
\newcommand{\tb}[1]{\overset{#1}{\b}}
\newcommand{\bb}[1]{\underset{#1}{\bullet}}
\newcommand{\bUu}{\overline{\Uu}}
\newcommand{\bcv}[1]{\underset{(\vec{#1})}{\c}}
\newcommand{\tcv}[1]{\overset{(\vec{#1})}{\c}}
\definecolor{ashy}{gray}{0.6}
 \theoremstyle{plain}
 \newtheorem{Thm}{Theorem}[section]
 \newtheorem{Prop}[Thm]{Proposition}
 \newtheorem{Lem}[Thm]{Lemma}
 \newtheorem{Cor}[Thm]{Corollary}
 \theoremstyle{definition}
 \newtheorem{Defi}[Thm]{Definition}
 \newtheorem{Ex}[Thm]{Example}
 \newtheorem{Rem}[Thm]{Remark}
 \newtheorem{Ob}[Thm]{Observation}
 \numberwithin{equation}{section}
\newenvironment{pf}{\begin{proof}}{\end{proof}}
\begin{document}

\title[Domestic triangle singularities]{Matrix factorizations for domestic triangle singularities}

\author[D. E. K\k{e}dzierski, H. Lenzing and H. Meltzer]{Dawid Edmund K\k{e}dzierski, Helmut Lenzing and Hagen Meltzer}

\address{Instytut Matematyki, Uniwersytet Szczeci\'nski, 70451
  Szczecin, Poland }

\email{d.e.kedzierski@wp.pl}

\address{Universit\"{a}t Paderborn; Institut f\"{u}r Mathematik; 33095 Paderborn, Germany}

\email{helmut@math.uni-paderborn.de}

\address{Instytut Matematyki, Uniwersytet Szczeci\'nski, 70451 Szczecin, Poland}

\email{meltzer@wmf.univ.szczecin.pl}

\subjclass[2000]{Primary 14J17,13H10,16G60; Secondary 16G70}

\keywords{triangle singularity, matrix factorization, weighted projective line, vector bundle, singularity category, Cohen-Macaulay module, projective cover, injective hull}

\thanks{}

\maketitle

\begin{abstract}
Working over an algebraically closed field $k$ of any characteristic, we determine the matrix factorizations for the --- suitably graded --- triangle singularities $f=x^a+y^b+z^c$ of domestic type, that is, we assume that  $(a,b,c)$ are integers at least two, satisfying $1/a+1/b+1/c>1$. Using work by Kussin-Lenzing-Meltzer, this is achieved by determining projective covers in the Frobenius category of vector bundles on the weighted projective line of weight type $(a,b,c)$. Equivalently, in a representation-theoretic context, we can work in the mesh category of $\ZZ\tilde\Delta$ over $k$, where $\tilde\Delta$ is the extended Dynkin diagram, corresponding to the Dynkin diagram $\Delta=[a,b,c]$.  Our work is related to, but in methods and results different from, the determination of matrix factorizations for the $\ZZ$-graded simple singularities by Kajiura-Saito-Takahashi. In particular,  we obtain symmetric matrix factorizations whose entries are scalar multiples of monomials, with scalars taken from $\{0,\pm1\}$.
\end{abstract}

\section{Introduction} \label{sect:introduction}
Assuming that $(a,b,c)$ is a triple of integers greater or equal $2$,
we investigate the $\LL$-graded  hypersurface $S=k[x_1,x_2,x_3]/(f)$ determined by the triangle singularity  $f=x_1^a+x_2^b+x_3^c$. Here, $\LL=\LL(a,b,c)$ is the rank one abelian group on generators $\vx_1$, $\vx_2$, $\vx_3$ with relations $a\vx_1=b\vx_2=c\vx_3=:\vc$, and the generator $x_i$ from $S$  is given degree $\vx_i$ ($i=1,2,3$). Note that the polynomial $f$ is homogeneous of degree $\vc$, the \emph{canonical element} of $\LL$.
Let $\XX=\XX(a,b,c)$ be the associated weighted projective line, whose category of coherent sheaves $\coh\XX$ is obtained from $S$ by Serre's construction as the quotient category $\modgr\LL{S}/\modgrnull\LL{S}$, see~\cite[Section 1.8]{Geigle:Lenzing:1987}. Sheafification, given by the natural quotient functor $q:\modgr\LL{S}\to \coh\XX$, then induces an equivalence between the full subcategory $\CMgr\LL{S}$ of $\LL$-graded (maximal) Cohen-Macaulay modules over $S$ and the category $\vect\XX$ of vector bundles on $\XX$~\cite[Theorem 5.1]{Geigle:Lenzing:1987}. Since $S$ is graded Gorenstein, the category $\CMgr\LL{S}$ is a Frobenius category with respect to the exact structure inherited from the abelian category $\modgr\LL{S}$ of finitely generated $\LL$-graded $S$-modules. With respect to this structure, the graded maximal Cohen-Macaulay modules of rank one form the indecomposable projective-injectives of $\CMgr\LL{S}$. The corresponding stable category $\sCMgr\LL{S}$ is triangulated. It is equivalent to the \emph{singularity category} $\Singgr\LL{S}$ introduced and studied by Buchweitz~\cite{Buchweitz:1986} in the ungraded and by Orlov~\cite{Orlov:2009} in the graded case.

Important for the present paper is an alternative description of a singularity category as the \emph{stable category of vector bundles} $\svect\XX$ on the weighted projective line $\XX$, see~\cite{Kussin:Lenzing:Meltzer:2013adv}. To define this category,  we call a sequence $\eta:0\to E'\to E\to E''\to 0$ of vector bundles \emph{distinguished-exact} if $\Hom(L,\eta)$ is exact for each line bundle $L$ on $\XX$. With the exact structure defined by these sequences, the category $\vect\XX$ of vector bundles on $\XX$ is a Frobenius category, equivalent to $\CMgr\LL{S}$, such that the indecomposable projective-injectives are just the line bundles on $\XX$. A fortiori, the stable category $\sCMgr\LL{S}$ of Cohen-Macaulay modules, is equivalent to the factor category $\svect\XX=\vect\XX/[\Ll]$, where $[\Ll]$ is the ideal consisting of all morphisms factoring through a finite direct sum of line bundles.

By results of Buchweitz \cite{Buchweitz:1986} and Orlov \cite{Orlov:2009}, it is known that the singularity category $\mathrm{Sing}^\LL(S)$, in the  $\LL$-graded sense,  and the category of $\LL$-graded  maximal  Cohen-Macaulay modules $\underline{\mathrm{CM}}^\LL(S)$ are equivalent.
Thus the stable category of vector bundles $\svect\XX$ is another incarnation of the singularity category.
In addition, all these categories are triangle equivalent to $\sMFgr\LL{T}{f}$, the \emph{stable category of $\LL$-graded matrix factorizations} of $f$ over the polynomial algebra $T=k[x_1,x_2,x_3]$.

For a base field of characteristic zero, a related category of graded matrix factorizations of a $\ZZ$-graded simple singularity was investigated by H. Kajiura, K. Saito and  A. Takahashi~\cite{KST-1}. While these authors work directly inside the category of matrix factorizations, we work inside the category of vector bundles on the associated weighted projective line, and exploit well-known results on the Auslander-Reiten theory of $\vect\XX$. By contrast, our paper takes as a starting point the study of triangle singularities, and the associated stable category of vector bundles \cite{Kussin:Lenzing:Meltzer:2013adv}. Accordingly, we work over an  algebraically closed field $k$  of arbitrary characteristic. We recall that $\chi_\XX=1-(1/a+1/b+1/c)$ is the \emph{Euler characteristic} of $\XX$ such that domestic type for $\XX$ relates to positive Euler characteristic.

For a weighted projective line $\XX$ of domestic weight type $(a,b,c)$, the main achievement of our paper is two-fold: (A) a complete description of the projective covers (resp.\ the injective hulls) of indecomposable vector bundles, and (B) a complete  description  of all  $\LL$-graded matrix factorizations for singularities $f=x_1^a+x_2^b+x_3^c$ for indecomposable $\LL$-graded (maximal) Cohen-Macaulay modules.

To achieve (A), we start with a fundamental result from \cite{Kussin:Lenzing:Meltzer:2013adv} on the projective covers, and likewise the injective hulls, of indecomposable vector bundles of rank two. For this first step there is no restriction on the Euler characteristic. Then in the second step, assuming domestic type,  we use the knowledge of the Auslander-Reiten quiver for the category $\vect\XX$, and use properly chosen distinguished-exact sequences to ``extend'' the projective covers to indecomposable bundles of higher rank. To achieve (B), we then lift minimal projective resolutions in $\CMgr\LL{S}=\vect\XX$ to matrix factorizations. As a key ingredient of the proof, we use that the indecomposable vector bundles involved are uniquely determined by their projective covers, see Proposition \ref{prop:determined_by_cover}.

We remark that step (A) has a direct interpretation in the representation theory of path algebras of extended Dynkin quivers: Assuming domestic type, it follows from a combination of \cite{Geigle:Lenzing:1987} and \cite{Happel:1988} that the category of indecomposable vector bundles on $\XX$ is equivalent to the mesh category $k(\ZZ\Delta)$ for the extended Dynkin star $\Delta=[a,b,c]$. Our results on projective covers and matrix factorizations thus offer new insight in the nature of the representation theory for path algebras of extended Dynkin type.

\section{Basic concepts} \label{sect:basic.concepts}
We briefly recall the concept of a weighted projective line, where we restrict to the case of \emph{triple weight type}, given by weight triples $(a,b,c)$ of integers greater or equal $2$. For a more general setting and further details we refer to \cite{Geigle:Lenzing:1987}. Throughout this paper, $k$ denotes an algebraically closed field.

Let  $\LL=\LL(a,b,c)$ be the rank one abelian group on
generators $\vx_1$, $\vx_2$, $\vx_3$ with relations
$a\vx_1=b\vx_2=c\vx_3 =: \vc$, where $\vc$ is called the \emph{canonical element}. We note that $\LL$ is naturally isomorphic to the \emph{Picard group} of $\XX$.
The polynomial algebra $T=k[x_1,x_2,x_3]$, $T=T(a,b,c)$ is equipped with an $\LL$-grading by giving $x_i$  degree $\vx_i$ for each $i=1,2,3$.
Further, let $S=S(a,b,c)$ denote the factor algebra $k[x_1,x_2,x_3]/(f)$, where $f=x_1^{a}+x_2^{b}+ x_3^{c}$.
Because $f$ is a homogeneous polynomial, the algebra $S$ is also $\LL$-graded; by $S_\vx$ we denote the finite dimensional vector space of elements of degree $\vx$. The weighted  projective line $\XX=\XX(a,b,c)$ is by definition the $\LL$-graded projective spectrum of the $\LL$-graded algebra $S$. By \cite{Geigle:Lenzing:1987} its category of coherent sheaves  $\coh \XX$ is obtained by Serre's construction as the quotient category of $\modgr\LL{S}$, the category of finitely generated $\LL$-graded $S$-modules, by the Serre subcategory $\modgrnull\LL{S}$ of all finite dimensional (=finite length) modules. By $q:\modgr\LL{S}\to \coh\XX$, $M\mapsto\wtilde{M}$, we note the corresponding quotient functor (sheafification). For the present paper, the following result is of importance. For its proof, we refer to \cite[Theorem 5.1]{Geigle:Lenzing:1987}, and for the last claim to \cite{Kussin:Lenzing:Meltzer:2013adv}. From the first paper we take the following description of $\LL$-graded (maximal) Cohen-Macaulay modules: A finitely generated $\LL$-graded $S$-module $M$ is (maximal) Cohen-Macaulay if and only if it satisfies the condition $\Hom(k(\vx),M)=0=\Ext^1(k(\vx),M)$ for each $\vx\in\LL$. Furtheron, Cohen-Macaulay will always mean \emph{maximal} $\LL$-graded Cohen-Macaulay.

\begin{Prop}
The sheafification functor $q:\modgr\LL{S}\ra \coh\XX$, $M\mapsto\tilde{M}$ induces an equivalence, $q:\CMgr\LL{S}\up{\approx}\vect\XX$ between the category $\CMgr\LL{S}$ of finitely generated $\LL$-graded $S$-modules  and the category $\vect\XX$ of vector bundles on $\XX$. This functor, induces an equivalence between the full subcategories $\projgr\LL{S}$ of finitely generated $\LL$-graded projective modules and the category $\Ll$ of line bundles on $\XX$. Accordingly, sheafification $q$ induces a triangle equivalence between the corresponding stable categories $\sCMgr\LL{S}$ and $\svect\XX$.\hfill\qed
\end{Prop}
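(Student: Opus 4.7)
My plan is to break the statement into three assertions and prove each in turn: (i) $q$ restricts to an equivalence $\CMgr\LL{S}\to\vect\XX$; (ii) under this equivalence, graded projectives correspond to line bundles; (iii) the induced functor on stable categories is a triangle equivalence. The general strategy for (i) is the standard Serre setup: $q$ is a quotient functor with kernel the Serre subcategory $\modgrnull\LL{S}$ of finite length modules, so one obtains an equivalence by restricting to a section of this quotient and showing the section consists exactly of the Cohen--Macaulay modules.

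Concretely, for (i) I would construct a right adjoint $\Gamma:\coh\XX\to\modgr\LL{S}$ to $q$ by sending a sheaf $\Ff$ to $\bigoplus_{\vx\in\LL}\Hom_\XX(\wtilde{S},\Ff(\vx))$. The counit $q\Gamma\to\id$ is always an iso on coherent sheaves, so the content lies in identifying the essential image of $\Gamma$ with $\CMgr\LL{S}$. For this one checks that the unit $\eta_M:M\to\Gamma qM$ has kernel and cokernel in $\modgrnull\LL{S}$, and that $\eta_M$ is an isomorphism precisely when $\Hom(k(\vx),M)=\Ext^1(k(\vx),M)=0$ for all $\vx\in\LL$, i.e.\ when $M$ is CM by the characterization recalled before the statement. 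Once $q$ and $\Gamma$ are quasi-inverse on these subcategories, fully faithfulness of $q$ on $\CMgr\LL{S}$ follows; essential surjectivity onto $\vect\XX$ amounts to showing that $\Gamma\Ff$ is CM when $\Ff$ is locally free, which in turn follows because locally freeness of $\Ff$ translates into the vanishing conditions above.

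For (ii), the free module $S(\vx)$ sheafifies to $\Oo(\vx)$, and since $\Pic(\XX)\cong\LL$ every line bundle is of this form. Hence $q$ sends finite direct sums of shifted copies of $S$ bijectively (up to iso) to finite direct sums of line bundles; combined with the fact that $S$ is graded local, so that indecomposable $\LL$-graded projectives are exactly the $S(\vx)$, this gives the second equivalence.

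For (iii), the Frobenius exact structure on $\CMgr\LL{S}$ is the one inherited from short exact sequences of graded $S$-modules, while the Frobenius structure on $\vect\XX$ is given by the distinguished-exact sequences. One has to verify that $q$ sends one to the other: if $0\to M'\to M\to M''\to 0$ is exact in $\CMgr\LL{S}$, then applying $q$ and then $\Hom_\XX(\Oo(\vx),-)$ yields $\Ext^1(k(\vx),M')=0$ on the right by the CM hypothesis, hence the resulting sequence of sheaves is $\Hom(L,-)$-exact for every line bundle $L$, i.e.\ distinguished-exact; conversely, a distinguished-exact sequence of vector bundles lifts via $\Gamma$ to a short exact sequence of CM modules. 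The projective-injective objects match by (ii), so $q$ descends to an exact functor $\sCMgr\LL{S}\to\svect\XX$, which is a triangle equivalence since it is the stabilization of an equivalence of Frobenius categories. The main obstacle in this whole scheme is the compatibility of exact structures in (iii), since the ``distinguished-exact'' structure on $\vect\XX$ is non-standard; fortunately this is precisely the point settled in~\cite{Kussin:Lenzing:Meltzer:2013adv}, while (i) and (ii) are essentially~\cite[Thm.~5.1]{Geigle:Lenzing:1987}.
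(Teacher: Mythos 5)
Your proposal is correct and follows essentially the same route as the paper, which offers no independent proof but refers to \cite[Theorem 5.1]{Geigle:Lenzing:1987} for the equivalence $\CMgr\LL{S}\simeq\vect\XX$ and the matching of projectives with line bundles, and to \cite{Kussin:Lenzing:Meltzer:2013adv} for the compatibility of the two Frobenius exact structures and the resulting triangle equivalence of stable categories. Your sketch of the section functor, the identification of the Cohen--Macaulay modules via the vanishing of $\Hom(k(\vx),M)$ and $\Ext^1(k(\vx),M)$, and the transfer of the exact structure to the distinguished-exact sequences is exactly the standard argument underlying those citations.
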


We now collect some facts on the category $\coh\XX$. This category is hereditary, that is, all extensions of degree $\geq2$ vanish and it admits Serre duality in the form ${\rm D} \ExtX(F,G)  \simeq  \HomX(G,\tauX  F)$, where
${\rm D}$ denotes the usual duality $\Hom_k(-,k)$ and $\tauX F=F(\vw)$, where $\vom=\vc -\sum_{i=1}^3 \vx_i$ is the \emph{dualizing element} of $\LL$. Consequently, $\coh\XX$ has almost-split sequences, and the Auslander-Reiten translation  $\tauX $ is a self-equivalence of $\coh\XX$ given by the degree shift $X\mapsto X(\vom)$.

The complexity of the classification problem of vector bundles on $\XX$ is largely determined by the \emph{Euler characteristic} of $\XX$, given by the expression
$\chi_\XX=1/a+1/b+1/c-1$. A weighted projective line $\XX$ is said to be of \emph{domestic type}  if  $\chi_\XX>0$.
Consequently, in our setup, $\XX(a,b,c)$ is of domestic type if and only if the weight type is, up to permutation, one of the following $(2,2 ,n)$ ($n\geq2$), $(2,3 ,3)$, $(2,3 ,4)$, or $(2,3 ,5)$.

The concept of matrix factorizations was introduced by D.~Eisenbud \cite{Eisenbud:1980}. For a textbook treatment we refer to \cite{Yoshino:1990}. We recall the definition and some basic facts, adapted to the present $\LL$-graded setting. Let $T=k[x_1,x_2,x_3]$ be the polynomial algebra, viewed as $\LL$-graded algebra, and fix the polynomial $f=x_1^{a}+x_2^{b}+x_3^{c}$. An \emph{$\LL-$graded matrix factorization of $f$} is a pair of homogeneous $T$-linear maps $\phi:P_1\to P_0$ and $\psi:P_0\to P_1(\vc)$ for $\LL$-graded projective $T$-modules $P_0$ and $P_1$, notation
\begin{equation} \label{eqn:matrix.factorization}
P_1\mf{\phi}{\psi}P_0, 
\end{equation}
such that the compositions $\phi\,\psi(-\vc):P_0(-\vc)\to P_0$ and $\psi\,\phi:P_1\to P_1(\vc)$ are both the multiplication maps with $f$. Since $P_0$ and $P_1$ are $\LL$-graded free $T$-modules, we may think of $\phi$ and $\psi$ and $f\,\id$  as matrices whose entries are homogeneous members from $T$, such that the two factorization conditions translate to the matrix equation $\phi\,\psi=f\,\id=\psi\phi$. We note that the  involved degree shifts will mostly be clear from the context. For the matrix description of a matrix factorization \eqref{eqn:matrix.factorization}, we always assume that the decompositions of $P_0$ and $P_0(-\vc)$ (respectively $P_1$ and $P_1(\vc)$) into projectives of rank one are \emph{compatible}, that is, correspond to each other by degree shift $X\mapsto X(-\vc)$. We note further, that when describing a matrix factorization by matrices, we need to keep track of the direct sum decompositions of $P_0$ and $P_1$ into line bundles.

For two matrix factorizations $P_1\mf{\phi}{\psi}P_0$ and $P'_1\mf{\phi'}{\psi'}P'_0$, a pair $(F_1,F_0)$ of (homogeneous) $T$-linear maps is called a morphism of matrix factorizations, if the diagram
$$\xymatrix {P_0(-\vc)\ar[r]^{\psi(-\vc)}\ar[d]^{F_0(-\vc)}&P_1\ar[d]^{F_1}\ar[r]^{\phi}& P_0\ar[d]^{F_0}\\
 P'_0(-\vc)\ar[r]^{\psi'(-\vc)}&P'_1\ar[r]^{\phi'}& P'_0}$$
is commutative. Thinking of $F_1$ and $F_0$ (and also $\phi$ and $\psi$) as matrices whose entries are \emph{homogenous elements from} $T$,  the two commutativity conditions \eqref{eqn:matrix.factorization} translate to matrix equations $F_0\phi=\phi' F_1$ and $F_1\psi=\psi' F_0$. We remark, that a matrix factorization $P_1\mf{\phi}{\psi}P_0$ is indecomposable if and only if its endomorphism ring is local.

For any $\LL$-graded matrix factorization \eqref{eqn:matrix.factorization}, the cokernel $M=\cok{P_1\up{\phi}P_0}$ is annihilated by $f$, hence belongs to $\modgr\LL{S}$. Actually, $M$ belongs to $\CMgr\LL{S}$, and is called the (maximal) graded \emph{Cohen-Macaulay $S$-module determined by $(\phi,\psi)$}, also denoted as $\cok{\phi,\psi}$. Let $\MFgr\LL{T}{f}$ denote the category of all $\LL$-graded matrix factorizations of $f$ over $T$. Let $\Uu$ denotes the full subcategory of trivial matrix factorizations $(1,f)$, then the assignment $(\phi,\psi)\mapsto \cok\phi$ establishes an equivalence between the factor category $\MFgr\LL{T}{f}/[\Uu]$ and the category $\CMgr\LL{S}$ of $\LL$-graded Cohen-Macaulay modules over $S$. By means of the equivalence $q:\CMgr\LL{S}\ra \vect\XX$, we may as well speak of the \emph{vector bundle $E=q(\cok{\phi})$ determined by the matrix factorization $(\phi,\psi)$}.
For any projective $T$-module, the functor $\mathrm{cok}:\MFgr\LL{T}{f}\to \CMgr\LL{S}$ sends $P\mf{1}{f}P$ to zero and $P(-\vc)\mf{f}{1} P$ to the projective $S$-module  $\bar{P}=P/f.P$, and all projective $S$-modules are obtained in this way.

We are now in a position to formulate Eisenbud's matrix factorization theorem~\cite{Eisenbud:1980}, adapted to our $\LL$-graded context. We are thereby following Yoshino's presentation~\cite{Yoshino:1990}.

\begin{Thm}
Let $\Uu$, respectively $\bUu$ be the full subcategory of $\MFgr\LL{T}{f}$ consisting of all $P\mf{1}{f}P$, respectively of all matrix factorizations $P\mf{1}{f}P$ and $P(-\vc)\mf{f}{1}S$.

Then the functor $\mathrm{cok}:\MFgr\LL{T}{f}\to \CMgr\LL{S}$ induces equivalences
$$
\MFgr\LL{T}{f}/[\Uu]\lra \CMgr\LL{S} \quad \textrm{ and }\quad  \MFgr\LL{T}{f}/[\bUu]\lra \sCMgr\LL{S}.
$$
Moreover, the suspension functor of the triangulated category $\sCMgr\LL{S}$ is induced by the (functorial) expression $(P_1\mf{\phi}{\psi}P_0)[1]=P_0\mf{\psi}{\phi}P_1(\vc)$. \hfill\qed
\end{Thm}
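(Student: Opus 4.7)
The plan is to adapt the classical arguments of Eisenbud~\cite{Eisenbud:1980} and Yoshino~\cite{Yoshino:1990} to the $\LL$-graded setting, keeping careful track of the degree shifts by $\vc$ (which replace the single Eisenbud-Yoshino shift by the degree of $f$). Throughout, one uses that $T=k[x_1,x_2,x_3]$ is an $\LL$-graded regular ring in which $f$ is a non-zero-divisor of degree $\vc$, so that every finitely generated $\LL$-graded $T$-module admits a finite $\LL$-graded projective resolution; and that a finitely generated $\LL$-graded $S$-module $M$ is Cohen--Macaulay precisely when it admits, as a $T$-module, a finite \emph{$f$-regular} resolution by $\LL$-graded projectives, equivalently, when $\Ext^{\geq 1}_T(M,T)$ vanishes (Auslander--Buchsbaum in the $\LL$-graded form).

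Essential surjectivity proceeds as follows. Given $M\in\CMgr\LL{S}$, I would choose a minimal $\LL$-graded $T$-projective resolution $\cdots\to P_2\to P_1\up{\phi}P_0\to M\to 0$. Since $fM=0$, multiplication by $f$ on this complex is nullhomotopic, so for each $i$ there exists an $\LL$-graded $T$-linear $s_i:P_{i-1}\to P_i(\vc)$ with $\phi_i\,s_i+s_{i+1}\phi_{i+1}=f\,\id$. A diagonal argument, identical to the one in \cite{Yoshino:1990}*{Ch.~7} but respecting the $\LL$-grading, then shows that from the syzygy $\Omega M$ onward the resolution is $2$-periodic, and collapsing two consecutive terms $P_1\to P_0$ yields an $\LL$-graded matrix factorization $P_1\mf{\phi}{\psi}P_0$ with cokernel $M$. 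The required compatibility of direct-sum decompositions under the shift $X\mapsto X(-\vc)$ is automatic from the construction.

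Next, I would verify fullness and faithfulness modulo $[\Uu]$. Fullness: any $T$-linear $g:M\to M'$ lifts to a pair $(F_1,F_0)$ by projectivity of the $P_i$ and commutativity of the defining squares; the relation $F_1\psi=\psi' F_0$ follows from multiplying the relation $F_0\phi=\phi' F_1$ by $\psi'$ on the left, using $\psi'\phi'=f\,\id$ and injectivity of multiplication by $f$ on the free modules $P'_0,P'_1$. Faithfulness: if $(F_1,F_0)$ induces the zero map on cokernels, then $F_0$ factors as $F_0=\phi'\alpha$ for some $\alpha:P_0\to P'_1$. Setting $\beta=F_1-\alpha\phi:P_1\to P'_1$, the identity $\phi'\beta=0$ together with $\psi'\phi'=f\id$ forces $\beta$ to factor through a trivial factorization in $[\Uu]$; packaging $\alpha$ and $\beta$ gives the required nullhomotopy through an object of $\Uu$. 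This yields the first equivalence. The second follows by noting that the additional class of factorizations $P(-\vc)\mf{f}{1}P$ in $\bUu$ maps exactly to the indecomposable projective-injective objects $\bar{P}=P/fP$ of the Frobenius category $\CMgr\LL{S}$ (by the remark in the paragraph preceding the theorem), so factoring out $[\bUu]$ on the MF side matches factoring out $[\Ll]$, equivalently the projective-injectives, on the CM side.

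Finally, for the suspension formula I would compute the cosyzygy. Splicing the $2$-periodic resolution gives a distinguished short exact sequence
\[
0\to M\to \bP_0\to N\to 0
\]
in $\CMgr\LL{S}$, where $\bP_0=P_0/fP_0$ is projective-injective and $N$ is the cokernel of $\psi:P_0\to P_1(\vc)$ reduced modulo $f$. By definition of the suspension in the stable category of a Frobenius category, $M[1]\simeq N$, which is exactly the cokernel of the matrix factorization $P_0\mf{\psi}{\phi}P_1(\vc)$; functoriality is checked by lifting morphisms as in the previous paragraph. The main delicate point of the whole proof is the bookkeeping of the $\vc$-shifts — in particular, ensuring that after passing to the stable category the switch $(\phi,\psi)\mapsto (\psi,\phi)$ acquires precisely the shift $P_1(\vc)$ and not some other twist — but once the degree conventions in~\eqref{eqn:matrix.factorization} are respected, this follows mechanically from the $2$-periodicity established in the first step.
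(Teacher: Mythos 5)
The paper gives no proof of this theorem: it is quoted from Eisenbud, following Yoshino's presentation, and your argument is exactly that classical proof transported to the $\LL$-graded setting, so the approach coincides with the (cited) source and is correct in substance. Three cosmetic slips are worth flagging, none of which affects the conclusion: (1) since $M$ is maximal Cohen--Macaulay, its minimal $T$-projective resolution already has length one by Auslander--Buchsbaum, so the matrix factorization arises directly from the nullhomotopy $\psi$ with $\phi\psi=f\,\id$ --- two-periodicity is a feature of the induced $S$-resolution \eqref{eqn:projective_resolution}, not of the $T$-resolution, and no ``diagonal argument'' is needed; (2) in the faithfulness step, $\phi'\beta=0$ forces $\beta=0$ outright (as $\phi'$ is injective, $f$ being a non-zero-divisor), whence $(F_1,F_0)=(\alpha\phi,\phi'\alpha)$ visibly factors through the trivial factorization $P'_1\mf{1}{f}P'_1$; (3) in the suspension computation the middle term of the short exact sequence is $\bP_1(\vc)$, not $\bP_0$, since exactness of the two-periodic complex at $\bP_0$ embeds $M=\cok{\bar\phi}$ into $\bP_1(\vc)$ via $\bar\psi$ with cokernel $N=\cok{\psi,\phi}$, as you in fact use.
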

We say that a matrix factorization $P_0\mf{\phi}{\psi}P_1$ is \emph{reduced} if $\phi$ and $\psi$ belong to the radical of $\modgr\LL{T}$, that is, if viewed as matrices, $\phi$ and $\psi$ have entries in the graded maximal ideal $(x_1,x_2,x_3)$ of $T$. The cokernel $M$ of a reduced matrix factorization is an $\LL$-graded Cohen-Macaulay module over $S=T/(f)$ without projective summands, moreover --- iterating the formation of matrix factorizations of $f$ over $T$ --- we obtain a sequence
\begin{equation} \label{eqn:sequence:matrix_factorization}
 \cdots \up{\psi} P_1(-\vc)\up{\phi} P_0(-\vc) \up{\psi}P_1\up{\phi}P_0\to M \to 0
\end{equation}
of matrix-factorizations which is 2-periodic up to degree shift with $\vc$.  Reduction modulo $(f)$ then yields the sequence
\begin{equation} \label{eqn:projective_resolution}
 \cdots\up{\bpsi} \bP_1(-\vc)\up{\bphi} \bP_0(-\vc) \up{\bpsi}\bP_1\up{\bphi}\bP_0\to M \to 0,
\end{equation}
which is a minimal $\LL$-graded projective and $2$-periodic resolution of $M$ over $S$. Here, the bar always stands for the reduction modulo $f$.

In order to determine a matrix factorization $P_1\mf{\phi}{\psi}P_0$ for a Cohen-Macaulay module $M$ without projective summands, we will first determine the  minimal projective resolution \eqref{eqn:projective_resolution} of $M$ over $S$, and then lift the $S$-matrix-pair $(\bphi,\bpsi)$ to a matrix pair $(\phi,\psi)$ over $T$, such that \emph{additionally the condition} $\phi\,\psi=f\,\id=\psi\,\phi$ holds.

For weight triples $(2,a,b)$, the suspension functor $[1]$ for $\svect\XX$ is induced by the degree shift $X\mapsto X(\vx_1)$ by $\vx_1$, see \cite[Proposition 6.8]{Kussin:Lenzing:Meltzer:2013adv}. This allows to introduce the concept of a \emph{symmetric matrix factorization} of $f$ for a graded Cohen-Macaulay module $M$ without projective summands (correspondingly for a vector bundle $E$ without line bundle summands). Namely, we call a matrix factorization $P_1\mf\phi\psi P_0$ for $M$ \emph{symmetric} --- recall that then $\phi:P_1\to P_0$ and $\psi:P_0\to P_1(\vc)$ are homogeneous $T$-linear maps --- provided we have $P_0=P_1(\vx_1)$ and further $\psi=\phi(\vx_1)$. Note that this request makes sense since $2\vx_1=\vc$.  In this case, by abuse of notation, we will --- as for ungraded symmetric matrix factorisations --- also write $\phi=\psi$. We will show in Section \ref{sect:matrix.factorizations} that for weight type $(2,a,b)$, each indecomposable vector bundle of rank two is determined by a symmetric matrix factorization of $f$. Moreover, if we deal with domestic type, necessarily given by a weight triple $(2,a,b)$, then each indecomposable vector bundle of rank at least two will admit a symmetric matrix factorization by Theorem \ref{thm:main:domestic}.

Assuming weight triple $(p_1,p_2,p_3)$ we put $\bar{p}=l.c.m.(p_1,p_2,p_3)$.
There is a unique group homomorphism $\delta: \LL \lra \ZZ$ called the \emph{ degree map} which sends $\vx_i$ to $\frac{\bar{p}}{p_i}$.
The kernel of $\delta$ is the torsion group $t\LL$ of $\LL$.
Further, there is a unique group homomorphism $\deg : \Knull (\coh \XX) \lra \ZZ$, called the  \emph{ degree}, such that $\deg([\Oo(\vx)])=\delta(\vx)$ holds
for each $\vx \in \LL$.
For each non-zero $X \in \coh \XX$ at least one of $\rk (X)$ or $\deg(X)$ is
non-zero, yielding a well defined \emph{ slope} $\mu (X)=\frac{\deg(X)}{\rk (X)}$
in the extended rationals $\QQ \cup \{\infty\}$.
The slope of an indecomposable object $X$ is a useful indicator of the position of $X$ in the category $\coh \XX$. In the domestic situation, moreover, each indecomposable vector bundle $X$ is \emph{stable}, that is, satisfies $\mu (X')<\mu(X)$ for each proper subobject $0 \neq X' \subsetneq X$. Still assuming domestic type, stability of a non-zero vector bundle  $X$  implies $\End (X)=k$
and $\ExtX (X,X)=0$, that is the exceptionality of $X$.
For all foregoing facts see \cite{Geigle:Lenzing:1987}.

For certain investigations a refinement of the degree, called \emph{determinant},
is necessary. This is a group homomorphism $\det : \Knull (\coh \XX) \lra \LL$
such that $\det (\Oo(\vx))= \vx$ holds for each $\vx \in \LL$.
In particular, we have $\deg= \delta \circ \det $, see
 \cite[2.7]{LM}.

By means of a line bundle filtration for a vector bundle $E$ one further obtains the formula

\begin{equation}\label{detformula}
\det(E(\vx))=\det(E) +\rk(E) \cdot \vx \quad \text{for all} \quad \vx \in \LL
\end{equation}
We finally recall from \cite{Geigle:Lenzing:1987} that the category $\coh\XX$ has almost-split sequences with the Auslander-Reiten translation given by degree shift $X\mapsto X(\vom)$ with the dualizing element $\vom=\vc-\sum_{i=1}^3\vx_i$.

\subsection*{The category of vector bundles for domestic weight triples}

A weight triple $(a,b,c)$ with entries $\geq2$ has domestic type, if and only if it is one of $(2,2,n)$, $n\geq2$, $(2,3,3)$, $(2,3,4)$ or $(2,3,5)$. The shape of the Auslander-Reiten quiver of $\vect\XX$ then is $\ZZ\Delta$, where $\Delta$ is the extended Dynkin diagram, attached to the Dynkin star $[a,b,c]$. The category of indecomposable vector bundles then is equivalent to the mesh category $k(\ZZ\Delta)$.
In this case, the stable category $\svect\XX$ is equivalent to the bounded derived category $D^b(\mod{\La})$ for the path algebra $\La=kQ$ of Dynkin type $\Delta'$ obtained from $\Delta$ by removing all vertices where the standard additive function on $\Delta$ takes value $1$ \cite[Section 5.1]{Kussin:Lenzing:Meltzer:2013adv}. The table below summarizes the situation.

\small
\begin{center}
\renewcommand{\arraystretch}{1.4}
\begin{tabular}{|c|c|c|c|c|} \hline
weight triple & $(2,2,n)$ & $(2,3,3)$ & $(2,3,4)$& $(2,3,5)$ \\ \hline
$\Delta$  & $\widetilde\DD_{n+2}$ & $\widetilde\EE_6=[3,3,3]$ & $\widetilde\EE_7=[2,4,4]$& $\widetilde\EE_8=[2,3,6]$\\ \hline
$\Delta'$ & $\mathbb{A}_{n-1}=[n-1]$   & $\DD_4=[2,2,2]$ & $\EE_6=[2,3,3]$& $\EE_8=[2,3,5]$\\ \hline
\end{tabular}
\end{center}
\normalsize
For the rest of the paper, it is important to understand how the Picard group $\LL$ acts on the mesh category $k(\ZZ\Delta)$, or on the underlying translation quiver $\ZZ\Delta$, by degree shift. We illustrate this for the weight triple $(2,3,4)$, where a piece of the Auslander-Reiten quiver is depicted below. The considerations are similar for the other domestic weight triples. We first remark that the rank of vector bundles is constant on $\tau$-orbits; the values of the rank are displayed at the right end.

$$
\tiny{
\def\c{\circ}
\def\b{\circ}
\xymatrix @R22pt @C22pt @!0{
& \c\ar[rd]       &                       &\c\ar[rd]       &                       &\c\ar[rd]       &                       &\c\ar[rd]       &               & (\vx_3)\ar[rd]       &                       &\c\ar[rd]       &                       &\c\ar[rd]       &                       &(\vx_1)\ar[rd]       &               &1&\\
\b\ar[ru]\ar[rd]       &                &\b\ar[ru]\ar[rd]       &                &\b\ar[ru]\ar[rd]       &                &\b\ar[ru]\ar[rd]       &                &\b\ar[ru]\ar[rd]&      &\b\ar[ru]\ar[rd]       &                &\b\ar[ru]\ar[rd]       &                &\b\ar[ru]\ar[rd]       &                &2\ar[ru]\ar[rd]&&& &\\
&\b\ar[ru]\ar[rd]&                       &\b\ar[ru]\ar[rd]&                       &\b\ar[ru]\ar[rd]&                       &\b\ar[ru]\ar[rd]&                       &\b\ar[ru]\ar[rd]&                       &\b\ar[ru]\ar[rd]&                       &\b\ar[ru]\ar[rd]&                       &\b\ar[ru]\ar[rd]&                       &3\\
\b\ar[ru]\ar[rd]\ar[r]&\b\ar[r]        &\b\ar[ru]\ar[rd]\ar[r]&\b\ar[r]         &\b\ar[ru]\ar[rd]\ar[r]&\b\ar[r]        &\b\ar[ru]\ar[rd]\ar[r]&\b\ar[r] &\b\ar[ru]\ar[rd]\ar[r]&\b\ar[r]        &\b\ar[ru]\ar[rd]\ar[r]&\b\ar[r]         &\b\ar[ru]\ar[rd]\ar[r]&\b\ar[r]        &\b\ar[ru]\ar[rd]\ar[r]&\b\ar[r] &4\ar[ru]\ar[rd]\ar[r]&2&\\
& \b\ar[ru]\ar[rd]&                       &\b\ar[ru]\ar[rd]&                       &\b\ar[ru]\ar[rd]&                       &\b\ar[ru]\ar[rd]&                       & \b\ar[ru]\ar[rd]&                       &\b\ar[ru]\ar[rd]&                       &\b\ar[ru]\ar[rd]&                       &\b\ar[ru]\ar[rd]&                       &3&\\
\b\ar[ru]\ar[rd]       &                &\b\ar[ru]\ar[rd]       &                &\b\ar[ru]\ar[rd]       &                &\b\ar[ru]\ar[rd]       &                &\b\ar[ru]\ar[rd]&                &\b\ar[ru]\ar[rd]       &                &\b\ar[ru]\ar[rd]       &                &\b\ar[ru]\ar[rd]       &                &2\ar[ru]\ar[rd]       &            &\\
&({\vom})\ar[ru]       &                       &(\vec{0})\ar[ru]       &                       &\c\ar[ru]       &                       &\c\ar[ru]       &                       &\c\ar[ru]       &                       &(\vx_2)\ar[ru]       &                       &\c\ar[ru]       &                       &\c\ar[ru]       &                       &1& &\\}
}
$$
We thus have two $\tau$-orbits of line bundles, the lower and the upper border, three $\tau$-orbits of indecomposable rank-two bundles,  two $\tau$-orbits of indecomposable bundles of rank 3 and a single $\tau$-orbit of rank 4. Since the Picard group acts transitively on the iso-classes of line bundles, we may freely choose the position of the structure sheaf from one of the two line bundle orbits. Once this is done, the position of the other line bundles is fixed, up to a symmetry of $\ZZ \Delta$. To indicate the position of a line bundle $\Oo(\vx)$, we use the bracket notation $(\vx)$ such that the structure sheaf is given by the symbol $(\vec{0})$, and its Auslander-Reiten translate $\tau\Oo$ is given by the symbol $(\vom)$, where $\vom=\vc-(\vx_1+\vx_2+\vx_3)$ and hence $\delta(\vom)=-1$. This now allows to determine easily the values of the degree function for each indecomposable vector bundle.  Since $\Oo(\vx_3)$ has degree $3$, and $\HomX(\Oo,\Oo(\vx_3))=k$, there is only one choice for position $(\vx_3)$, once the position $(\vec{0})$ has been fixed. All further line bundles then are given by one of the symbols $(\vec{0}+n\vom)$, respectively $(\vx_3+n\vom)$, with $n\in\ZZ$.

Corresponding to the positions  $(\vx_1)$, $(\vx_2)$ and $(\vx_3)$ in the mesh-category, the shift actions by $\vx_1$, $\vx_2$ and $\vx_3$ are given as follows: The shift by $\vx_1$ (resp.\ $\vx_3$) is a glide reflection, composed by the reflection with respect to the central horizontal axis with the sixth respectively third power of $\tau^-$. Further, the shift action by $\vx_2$ equals $\tau^{-4}$.

Finally, let us remark that, obviously, the factor category $\vect \XX /[\Ll]$, obtained from $\vect\XX$, for $\XX=\XX(2,3,4)$, by factoring out the two line bundle orbits yields the mesh category $k(\ZZ\EE_6)$, equivalent to $D^b(\mod {kQ})$ for a quiver $Q$ having type $\EE_6$, thus illustrating the facts mentioned at the beginning of this section.

\begin{Rem}
	\label{Remark_shift_by_x_1}
In view of Theorem \ref{Thm:2ab}, it is useful to interpret the degree shift by $\vx_1$ in terms of the Auslander-Reiten quiver of $\vect\XX$ resp.\ $\svect\XX$. For this, we assume domestic type $(2,a,b)$.
\begin{itemize}
\item[$\bullet$] For type $(2,3,5)$ we have $\vx_1=-15\vw$. Thus the degree shift by $\vx_1$ is the translation to the right by $15$ mesh-units.
\item[$\bullet$] For type $(2,3,4)$ we have $\vx_1=-6\vom + (\vx_1-2\vx_3)$. We note that the element $\vx_1-2\vx_3$ has order two. Thus the degree shift by $\vx_1$ is the glide reflection given by composing the reflection on the central axis with the translation to the right by $6$ mesh-units.
\item[$\bullet$] For type $(2,3,3)$, we have $\vx_1=-3\vw$, and the degree shift by $\vx_1$ is the translation to the right by $3$ mesh-units.
\item[$\bullet$] For type $(2,2,n)$, we use that the degree shifts by $\vx_1$, $\vx_2$ and $-\vom$ agree on objects of $\svect\XX$ \cite[page 235]{Kussin:Lenzing:Meltzer:2013adv}, and only deal with the shift action of $\vx_1$ on objects of $\svect\XX$. Further, we need to distinguish whether $n$ is even or odd: For $n=2k$ (resp.\ $n=2k+1$) the degree shifts by $-k\vom$ and $\vx_1+(k\vx_3-\vx_1)$ (resp.\ by $-k\vom$ and $\vx_1+(k\vx_3-\vx_1)$) agree on objects of $\svect\XX$. In the first case the element $k\vx_3-\vx_1$ has order two, while in the second case we obtain $2(k\vx_3-\vx_1)=-\vx_3$.  Hence  the degree shift by $\vx_1$ on the Auslander-Reiten quiver of $\svect\XX$ is the glide reflection given by composing the reflection on the central axis with the translation to the right by $k$ mesh-units (resp.\ by $k+1/2$ mesh-units).
\end{itemize}
\end{Rem}

\section{Projective covers}
	\label{sec_proj_covers}

\emph{When speaking of weight triples, we always assume that the weights are at least two. In the domestic case this just excludes the weight types $(\;)$, $(a)$, and $(a,b)$ where each indecomposable vector bundle is a line bundle, and the matrix factorization problem thus becomes trivial.}

\subsection*{General results}

Assuming an arbitrary weight triple $(p_1,p_2,p_3)$, this section starts by quoting two general results \cite[Theorems 4.2 and 4.6]{Kussin:Lenzing:Meltzer:2013adv} on indecomposable vector bundles of rank two and their projective covers in $\vect\XX$. We recall that the double suspension functor for $\svect\XX$ is induced by degree shift with the canonical element $\vc$. Moreover, for weight triples $(2,a,b)$, the suspension functor itself is induced by the degree shift with $\vx_1$ \cite[Proposition 6.8]{Kussin:Lenzing:Meltzer:2013adv}.  Switching now to weight triples of domestic type, necessarily of type $(2,a,b)$, the aim of this section is to determine the projective cover (likewise the injective hull) for each indecomposable vector bundle of rank $\geq2$.

We assume triple weight type $(p_1,p_2,p_3)$. Let $\delta=\vc+2\vom$ be the \emph{dominant element} of $\LL$. The elements $\vnull\leq\vx\leq\vdom$ then have the form $\vx=\sum_{i=1}^3l_i\vx_i$ with $0\leq l_i\leq p_i-2$.
Following \cite[section~4]{Kussin:Lenzing:Meltzer:2013adv}, a vector bundle $E$ of rank $2$ is called an \emph{extension bundle} if $E$ is the middle term of a non-split exact sequence
\begin{equation} \label{eqn:ext_bundle}
\eta_\vx:\quad0\to L(\vw)\to E \to L(\vx)\to 0,
\end{equation}
where $L$ is a line bundle and $\vnull\leq \vx\leq \vec{\delta}$.
Because $\ExtX(L(\vx),L(\vw) )=k$, the bundle  $E$ is uniquely determined, up to isomorphism; we then denote $E$ by $\extb{L}{\vx}$. For $L=\Oo$ we just write $\extbo{\vx}$. If $\vx=0$, then the sequence $\eta_\vx$ is almost-split, and $E=\extb{L}{0}$ is called an \emph{Auslander bundle}, more precisely the Auslander bundle attached to $L$. Applying degree shift by $\vy$ from $\LL$ to the exact sequence \eqref{eqn:ext_bundle}, we obtain the useful identity
\begin{equation} \label{eqn:ext_bundle:shift}
(\extb{L}{\vx})(\vy)\cong \extb{L(\vy)}{\vx} \quad \text{ for all }\quad 0\leq \vx\leq \delta,\, \vy\in\LL.
\end{equation}

We recall, that an object $E$ in an abelian (resp.\ a triangulated category) is \emph{exceptional} if $\End(E)=k$ and further  $\Ext^d(E,E)=0$ (resp. $\Hom(E,E[d])=0$) for each integer $d\neq0$. For objects of a hereditary category, like $\coh\XX$, the Ext-condition only needs to be checked for $d=1$.

The following three theorems from \cite{Kussin:Lenzing:Meltzer:2013adv} mark the starting point of our investigation.
For the first one we refer to Theorem 4.2 and Corollary 4.11, for the second one to Theorem 4.6 from the quoted paper. We recall that $\vdom=\sum_{i=1}^3 (p_i-2)\vx_i$ denotes the dominant element of $\LL$.

\begin{Thm}[Vector bundles of rank two]
Assume $\XX$ is given by a weight triple $(p_1, p_2, p_3)$. Then the following holds:

\begin{enumerate}
\item[(i)] Each indecomposable vector bundle of rank two is isomorphic to an extension bundle $\extb{L}{\vx}$ for a suitable choice of a line bundle $L$ and an element $\vx$ from $\LL$ satisfying $0\leq \vx \leq \vdom$.
\item[(ii)] Each indecomposable vector bundle of rank two is exceptional in the category $\coh\XX$ of coherent sheaves on $\XX$. It is also exceptional in the stable category of vector bundles $\svect\XX$. \hfill\qed
\end{enumerate}
\end{Thm}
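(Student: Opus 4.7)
The plan is to handle the two parts separately: for (i) I realise $E$ as a non-split extension of line bundles and then control the twist $\vx$; for (ii) I compute $\End(E)$ and $\Ext^1(E,E)$ directly from the defining sequence.

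For (i), I first choose a line-bundle subobject $M\hookrightarrow E$ of maximal slope. Such an $M$ exists since slopes of line subbundles of a fixed vector bundle are bounded above. By maximality the cokernel $C$ must be torsion-free, hence a line bundle: otherwise the preimage in $E$ of the torsion of $C$ would yield a line subbundle strictly containing $M$, contradicting maximality of slope. Setting $L:=M(-\vw)$ normalises the resulting short exact sequence to $0\to L(\vw)\to E\to L(\vx)\to 0$ for a unique $\vx\in\LL$, and non-splitness is automatic from indecomposability of $E$. Non-splitness combined with Serre duality $\Ext^1(L(\vx),L(\vw))\cong\D S_\vx$ forces $S_\vx\neq 0$, hence $\vx\geq \vnull$. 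For the upper bound $\vx\leq\vdom$ I would argue by contradiction: if some coefficient $l_i$ of $\vx=\sum l_i\vx_i$ reached $p_i-1$, I would use a suitable monomial in $S_\vx$ to exhibit a line subbundle of $E$ of slope larger than $\mu(M)$, contradicting maximality. Since $\dim\Ext^1(L(\vx),L(\vw))=1$ throughout the range $\vnull\leq\vx\leq\vdom$, the non-split extension is unique up to isomorphism and therefore $E\cong\extb{L}{\vx}$.

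For (ii) I apply $\Hom(-,L(\vw))$, $\Hom(-,L(\vx))$ and finally $\Hom(E,-)$ to $\eta_\vx$. All the line-bundle $\Hom$ and $\Ext^1$ groups reduce via Serre duality to dimensions of graded components $\dim S_\vz$, which are computed directly from the normal form of $\vz$. The two decisive inputs are: (a) the connecting map $\End(L(\vw))\to\Ext^1(L(\vx),L(\vw))$ sends $\id$ to $[\eta_\vx]\neq 0$, and (b) in the range $\vnull\leq\vx\leq\vdom$ the graded pieces $S_{\vx-\vw}$, $S_\vw$ and $S_{2\vw-\vx}$ all vanish. Together these give $\Hom(E,L(\vw))=0=\Ext^1(E,L(\vw))$ and $\Hom(E,L(\vx))=k$, $\Ext^1(E,L(\vx))=0$, whence the long exact sequence coming from $\Hom(E,-)$ applied to $\eta_\vx$ yields $\End(E)=k$ and $\Ext^1(E,E)=0$, i.e.\ exceptionality in $\coh\XX$. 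To pass to $\svect\XX$, I would observe that $\id_E$ cannot factor through a line bundle, since such a factorisation would split off a line-bundle summand of the indecomposable $E$; hence $\End_{\svect\XX}(E)=k$. The higher shift spaces $\Hom_{\svect\XX}(E,E[d])$ are then handled by combining the double-suspension formula $[2]=(\,\cdot\,)(\vc)$ with the analogous extension-sequence computations applied to $E(n\vc)$, checking that any residual morphisms factor through line bundles.

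The main obstacle I foresee is the upper bound $\vx\leq\vdom$ in (i): converting slope-maximality of $M$ into the coordinate-wise inequality $l_i\leq p_i-2$ requires, for every $\vx$ violating this bound, producing an explicit element of $S_\vx$ whose associated embedding of a line bundle into $E$ has slope larger than $\mu(M)$. A secondary difficulty is the $\svect\XX$ part of (ii) for weight types outside $(2,a,b)$, where the odd suspensions are not implemented by a single degree shift; there I would lean on the double-suspension formula together with the exceptionality already established in $\coh\XX$ to reduce the required vanishings to the computations performed in (ii) for $\coh\XX$.
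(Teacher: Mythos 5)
This theorem is not proved in the paper at all: it is imported from \cite{Kussin:Lenzing:Meltzer:2013adv} (Theorem 4.2 and Corollary 4.11 there), so your argument is really being compared with the cited source. For part (i) and for exceptionality in $\coh\XX$ your route is the standard one and is essentially correct. The one step you rightly single out, the bound $\vx\leq\vdom$, does go through, but the mechanism is slightly different from ``a suitable monomial in $S_\vx$'': if $\vx=\sum_i l_i\vx_i+l\vc$ (normal form) violates the bound, i.e.\ $l\geq1$ or some $l_i=p_i-1$, one can choose $k$ so that $\vz:=\vx-(\vw+\vx_k)\geq\vnull$; since $\vw+\vx_k=\vc-\vx_i-\vx_j$ has normal form with $\vc$-coefficient $-1$, one gets $\Ext^1(L(\vw+\vx_k),L(\vw))\cong \D S_{\vw+\vx_k}=0$, so the subsheaf $L(\vw+\vx_k)\subseteq L(\vx)$ lifts to a line subbundle of $E$ of degree $\deg M+\delta(\vx_k)>\deg M$. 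The lift exists because an $\Ext$-group vanishes, not because of a kernel element of $[\eta_\vx]$ in $S_\vx$ (when $l=0$ and some $l_i=p_i-1$ the space $S_\vx$ is one-dimensional and $[\eta_\vx]$ has no kernel there); also note that your normal form $\vx=\sum l_i\vx_i$ silently omits the case $\vx\geq\vc$, which the same choice of $\vz$ handles. Your long exact sequence computation of $\End(E)=k$ and $\ExtX(E,E)=0$ is correct, and the three vanishings $S_{\vx-\vw}=S_{\vw}=S_{2\vw-\vx}=0$ do hold for exactly the stated reason.

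The genuine gap is the exceptionality in $\svect\XX$. You only establish $\End_{\svect\XX}(E)=k$; the definition also demands $\Hom_{\svect\XX}(E,E[d])=0$ for every $d\neq0$, and this does not follow formally from exceptionality in $\coh\XX$. Only $d=1$ is formal, via $\Hom_{\svect\XX}(E,E[1])=\Ext^1_{\mathrm{dist}}(E,E)\subseteq\ExtX(E,E)=0$. For $|d|\geq2$ and for all negative $d$ one must identify the (co)syzygies $\Omega^{d}E$ --- rank-two bundles computed from the explicit projective covers and injective hulls of Theorem \ref{KLM_proj_cov}, hence again extension bundles --- and then prove that every morphism $\Omega^{d}E\to E$ in $\vect\XX$ factors through a direct sum of line bundles. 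The double-suspension formula $[2]=(\vc)$ only reduces this to the two infinite families $\Hom(E,E(n\vc))$ and $\Hom(\Omega^{-1}E,E(n\vc))$, $n\in\ZZ$; outside the domestic range, where stability is unavailable, these Hom-spaces need not vanish on slope grounds and the factorization through $\Ll$ is a genuine computation (it is precisely the content of Corollary 4.11 in the cited paper). As written, your proposal asserts this part rather than proving it.
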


\begin{Thm}[Projective and injective covers]
\label{KLM_proj_cov}
Assume $\XX$ is given by the weight
triple $(p_1, p_2, p_3)$. Let  $\extb{L}{\vx}$, $0 \leq \vx\leq \vdom$, be an extension bundle. Then
its injective hull $\ih{\extb{L}{\vx}}$ and its projective cover $\pc{\extb{L}{\vx}}$ are given by the
following expressions:
\begin{align}
\ih{\extb{L}{\vx}}&= L(\vx)\oplus \bigoplus_{i=1}^3 L((1+l_i)\vx_i+\vw)\\
\pc{\extb{L}{\vx}}&= L(\vw)\oplus \bigoplus_{i=1}^3 L(\vx-(1+l_i)\vx_i),
\end{align}
where $\vx=l_1\vx_1+l_2\vx_2+l_3\vx_3$.

Further, the four line bundle summands $(L_i)_{i=0}^3$
 of $\ih{\extb{L}{\vx}}$ (resp. $\pc{\extb{L}{\vx}}$) are
mutually Hom-orthogonal, that is, they are satisfying
\[\Hom(L_i,L_j)=\left\{\begin{array}{lr}
k& if\ i=j\\
0& if\ i\neq j
\end{array}\right.\]\hfill\qed
\end{Thm}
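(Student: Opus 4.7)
My plan is to exploit the Frobenius self-duality of $\vect\XX$ (equivalently the Auslander-Reiten translation $\tauX = (\vom)$) to reduce the projective cover statement to the injective hull statement: the degree shift by $\vom$ sends $\extb{L}{\vx}$ to $\extb{L(\vom)}{\vx}$, and under this duality projective covers correspond to injective hulls, matching the stated formulas. It therefore suffices to construct the injective hull.

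To construct $\ih{E}$ for $E = \extb{L}{\vx}$, I would write down four distinguished line bundle quotients of $E$. First, the defining sequence $\eta_\vx: 0 \to L(\vom) \to E \to L(\vx) \to 0$ gives the tautological $\pi: E \to L(\vx) =: L_0$. Next, for $i = 1, 2, 3$, start with the multiplication-by-$x_i^{1+l_i}$ map $\mu_i: L(\vom) \to L((1+l_i)\vx_i + \vom) =: L_i$, and ask whether it extends along $L(\vom) \hookrightarrow E$. The obstruction is in $\ExtX(L(\vx), L_i)$, which by Serre duality equals $\D \HomX(L_i, L(\vx+\vom))$; reducing the difference $\vx - (1+l_i)\vx_i = l_j\vx_j + l_k\vx_k - \vx_i$ to normal form $\sum l'_s \vx_s + m \vc$ with $0 \leq l'_s \leq p_s - 1$ yields $m = -1$, so the Hom vanishes. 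Hence each $\mu_i$ extends to some $\phi_i: E \to L_i$, and I set $\Phi = (\pi, \phi_1, \phi_2, \phi_3): E \to I := L_0 \oplus L_1 \oplus L_2 \oplus L_3$.

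Now I would verify that $\Phi$ is a distinguished-exact monomorphism with the required minimality. It is a sheaf-monomorphism because the restriction of $\Phi$ to the line sub-bundle $L(\vom) \subset E$ already has each $\phi_i|_{L(\vom)} = \mu_i \neq 0$, while $\pi$ is nonzero on $E/L(\vom)$. Hom-orthogonality of the four summands follows from the same type of normal-form calculation: for any pair $L_i, L_j$ with $i \neq j$, one checks that both $L_j - L_i$ and $L_i - L_j$ have $\vc$-coefficient $-1$ in normal form, so $\HomX(L_i, L_j) = 0$. Consequently any endomorphism of $I$ is diagonal, and $f \circ \Phi = \Phi$ together with the non-vanishing of every component of $\Phi$ forces $f = \id$, yielding left-minimality. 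The distinguished-exactness itself (equivalently, that the cokernel $C = I/\Phi(E)$, which has rank $2$, is again a vector bundle) can be verified by identifying $C$ via symmetry: the dual/twisted construction for the rank-two bundle $\tauX^{-1} E$ combined with the explicit form of the $\phi_i$ exhibits $C$ as the corresponding extension bundle, in particular as locally free.

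The step I expect to be the main obstacle is the cokernel analysis in paragraph three: showing cleanly that $C$ is a vector bundle (as opposed to a sheaf with torsion) and thereby establishing distinguished-exactness. Everything else --- the existence of the lifts $\phi_i$, the Hom-orthogonality, and the deduction of minimality --- is essentially a normal-form bookkeeping exercise in $\LL$. Once the injective hull is pinned down, switching $L \mapsto L(\vom)$ and reversing arrows (equivalently, applying Serre duality) delivers the projective cover formula, and the Hom-orthogonality of its four summands follows by the analogous calculation.
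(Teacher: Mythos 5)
The paper itself gives no proof of this statement: it is imported verbatim from \cite{Kussin:Lenzing:Meltzer:2013adv} (Theorem 4.6 together with Corollary 4.11) and closed with a \qed, so your argument can only be measured against that reference, not against anything internal to this paper. The computational core of your construction is sound: the vanishing of the obstruction $\ExtX(L(\vx),L_i)\cong \DHomX(L_i,L(\vx+\vom))$, the Hom-orthogonality of the four summands, and the deduction of left-minimality from Hom-orthogonality together with the non-vanishing of the components of $\Phi$ are all correct normal-form computations in $\LL$.

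There are, however, two genuine problems. First, the opening reduction is wrong as stated: the degree shift by $\vom$ is a \emph{covariant} autoequivalence of $\vect\XX$, so it commutes with the formation of projective covers and of injective hulls separately and cannot interchange the two notions. What does interchange them is a contravariant duality such as $E\mapsto E^\vee$, under which $\extb{L}{\vx}^\vee\cong \extb{L^\vee(-\vx-\vom)}{\vx}$ and the two displayed formulas do correspond; the reduction is salvageable, but not by the mechanism you give. Second, and more seriously, the step you flag as the main obstacle is mischaracterized and then left open. Distinguished-exactness of $0\to E\stackrel{\Phi}{\lra} I\to C\to 0$ is \emph{not} equivalent to $C$ being a vector bundle: by definition one needs $\HomX(L',I)\to\HomX(L',C)$ surjective for \emph{every} line bundle $L'$, and the defining sequence $\eta_\vx$ is itself a short exact sequence of vector bundles which fails this (being non-split, the identity of $L(\vx)$ does not lift to $E$). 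So even after showing $C$ is torsion-free you would not know that $\Phi$ is an inflation in the Frobenius structure, hence not that $I$ is an injective hull at all; and the proposed identification of $C$ ``via symmetry'' with an extension bundle attached to $\tauX^{-}E$ is an assertion, not an argument. This lifting property for all line bundles is exactly the hard content of the cited theorem, and it is the part your proposal does not supply.
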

The next result is a straightforward consequence of \cite[Proposition 6.8]{Kussin:Lenzing:Meltzer:2013adv}.
\begin{Thm}[Weight type $(2,a,b)$] \label{Thm:2ab}
Let $\XX$ be the weighted projective line of type $(2,a,b)$ and $E$ be an indecomposable vector bundle of rank at least $2$. There is a distinguished short exact sequence
$$0\lra E(-\vx_1)\lra \pc{E}\stackrel{\pi_E}{\lra}E\lra 0,$$
where $\pc{E}$ is the projective cover of $E$, and likewise the injective hull $\ih{E(-\vx_1)}$ of $E(-\vx_1)$.
In particular, $\ih{E}=\pc{E}(\vx_1)$ and $\rk \pc{E}=2\rk E$.\hfill\qed
\end{Thm}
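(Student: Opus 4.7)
\textbf{Proof plan for Theorem \ref{Thm:2ab}.}

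The strategy is to combine two ingredients: (i) the standard description of short exact sequences arising from projective covers in a Frobenius category, and (ii) the identification of the suspension functor on $\svect\XX$ with degree shift by $\vx_1$ (namely \cite[Proposition 6.8]{Kussin:Lenzing:Meltzer:2013adv}), which is the crucial input for weight type $(2,a,b)$.

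First, I recall that by the Proposition at the start of Section \ref{sect:basic.concepts}, $\vect\XX$ is a Frobenius category whose indecomposable projective-injective objects are precisely the line bundles. Since $E$ is indecomposable of rank at least two, $E$ has no line bundle summand, so its projective cover $\pc{E}$ exists (as a finite direct sum of line bundles) and fits into a distinguished-exact sequence
$$
0 \to K \to \pc{E} \stackrel{\pi_E}{\to} E \to 0,
$$
with $\pi_E$ a right-minimal epimorphism. General Frobenius category theory then identifies $K$, in the associated stable category $\svect\XX$, with the syzygy $\Omega E = E[-1]$.

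Now invoking \cite[Proposition 6.8]{Kussin:Lenzing:Meltzer:2013adv}, which for weight triples $(2,a,b)$ asserts that the suspension on $\svect\XX$ is induced by degree shift by $\vx_1$, I obtain $E[-1] \cong E(-\vx_1)$ in $\svect\XX$. Hence $K$ and $E(-\vx_1)$ become isomorphic after passing to the stable category. The main (small) hurdle is to lift this stable isomorphism to an honest isomorphism in $\vect\XX$: this works because neither $K$ nor $E(-\vx_1)$ admits a line bundle summand. For $E(-\vx_1)$ this is clear since $E$ is indecomposable of rank $\geq 2$, and shifting degrees permutes line bundles; for $K$ it follows from the minimality of $\pi_E$, since any line bundle summand of $K$ would split off a line bundle summand of $\pc{E}$ not needed to map onto $E$, contradicting minimality of the projective cover. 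Two objects of $\vect\XX$ with no line bundle summands that are isomorphic in the stable category are already isomorphic in $\vect\XX$, which gives $K \cong E(-\vx_1)$ as desired.

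Finally, the rank formula $\rk \pc{E} = 2 \rk E$ is immediate from additivity of rank along the exact sequence, since $\rk E(-\vx_1) = \rk E$. The identity $\ih{E} = \pc{E}(\vx_1)$ is obtained by shifting the displayed sequence by $\vx_1$:
$$
0 \to E \to \pc{E}(\vx_1) \to E(\vx_1) \to 0,
$$
whose middle term is again a direct sum of line bundles (hence projective-injective) and whose left map is a minimal monomorphism by symmetry, so that $\pc{E}(\vx_1)$ serves as the injective hull of $E$. I expect the only delicate point to be the lifting of the stable isomorphism $K \cong E(-\vx_1)$ to $\vect\XX$, i.e., ensuring that both modules are free of line bundle summands; once this is established the rest is bookkeeping.
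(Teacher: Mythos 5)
Your proposal is correct and follows exactly the route the paper intends: the paper states this theorem without proof as a ``straightforward consequence'' of \cite[Proposition 6.8]{Kussin:Lenzing:Meltzer:2013adv}, and your argument --- identifying the kernel of the minimal projective cover with the syzygy $E[-1]=E(-\vx_1)$ in $\svect\XX$ and then lifting the stable isomorphism using that neither $K$ nor $E(-\vx_1)$ has line bundle summands --- is precisely the intended derivation. The details you supply (minimality forcing $K$ to be free of projective-injective summands, and the rank/shift bookkeeping for $\ih{E}=\pc{E}(\vx_1)$) are all sound.
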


The following variant of the `horse-shoe lemma' from homological algebra will be used to determine projective covers for vector bundles of larger rank. A dual result is valid for injective hulls.

\begin{Lem}
	\label{proj_cover_from_sequence}
We assume weight type $(2,a,b)$. Let $X$ and $Y$ be vector bundles with projective covers $\pc{X} \stackrel{\pi_X}{\lra} X$ and $\pc{Y} \stackrel{\pi_Y}{\lra} Y$.
Let
$$(\star) \quad 0\lra X \stackrel{f}{\lra} E \stackrel{g}{\lra} Y\lra 0,$$
be a distinguished exact sequence in $\vect\XX$. (This condition is satisfied if ($\star$) is exact and $\ExtX(\pc{Y},X)$ is zero). Then $\pi_Y$ lifts to a map $\pi^*_Y:\pc{Y}\to E$, yielding a
\def\hh{\left[\begin{array}{cc}
f\circ\pi_X,\alpha
\end{array}\right]}
commutative diagram
$$\xymatrix{
0\ar[r]&\pc{X}\ar[r]\ar[d]_{\pi_X}     &\pc{X}\oplus\pc{Y}\ar[r]\ar[d]_{[f\pi_X,\pi^*_Y]}& \pc{Y}\ar[r]\ar[d]_{\pi_Y}&0\\
0\ar[r]&      X\ar[r]^{f}&E\ar[r]^{g}             & Y\ar[r]& 0
}$$ which establishes $\pc{X}\oplus\pc{Y}$ as a projective cover of $E$. Moreover, the rows of the diagram are distinguished exact and the vertical maps are distinguished epimorphisms.
\end{Lem}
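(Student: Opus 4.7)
My plan has four steps: lift $\pi_Y$ to a map $\pc{Y}\to E$; assemble and check commutativity of the diagram; verify that the middle vertical is a distinguished epimorphism; and argue minimality of the resulting cover.

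First, to produce $\pi_Y^*$, note that $\pc{Y}$ is a direct sum of line bundles, so distinguished-exactness of $(\star)$ yields surjectivity of $\Hom(\pc{Y},g)$; any preimage of $\pi_Y$ under this map serves as $\pi_Y^*$. To verify the parenthetical sufficient condition, assume $(\star)$ is merely exact in $\coh\XX$ and $\ExtX(\pc{Y},X)=0$. For any line bundle $L$ and $\alpha:L\to Y$, first lift $\alpha$ through $\pi_Y$ to $\beta:L\to\pc{Y}$, possible since $\pi_Y$ is a distinguished epimorphism as a projective cover; then the Ext-vanishing lifts $\pi_Y$ itself to some $\tilde\pi:\pc{Y}\to E$ via the long exact $\Hom$-sequence, and $\tilde\pi\beta$ is the required lift of $\alpha$ through $g$. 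Hence $(\star)$ is distinguished-exact.

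Next, set $\Phi:=[f\pi_X,\,\pi_Y^*]$. The left square commutes by inspection; for the right square, $g\Phi=[gf\pi_X,\,g\pi_Y^*]=[0,\pi_Y]$, using $gf=0$ and $g\pi_Y^*=\pi_Y$. The top row is the canonical split short exact sequence, hence distinguished-exact. To show $\Phi$ is a distinguished epimorphism, let $L$ be any line bundle and $\alpha:L\to E$. Lift $g\alpha$ through $\pi_Y$ to $\beta:L\to\pc{Y}$; then $g(\alpha-\pi_Y^*\beta)=0$, so $\alpha-\pi_Y^*\beta=f\gamma$ for a unique $\gamma:L\to X$. Lift $\gamma$ through $\pi_X$ to $\delta:L\to\pc{X}$; then $\Phi(\delta,\beta)=f\pi_X\delta+\pi_Y^*\beta=\alpha$, so $\Hom(L,\Phi)$ is surjective.

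The main obstacle I anticipate is minimality, i.e., that $\pc{X}\oplus\pc{Y}$ is actually the projective cover of $E$ rather than merely a distinguished epimorphism from a projective object. I would apply the snake lemma to the commutative diagram to obtain a distinguished exact sequence $0\to\ker\pi_X\to\ker\Phi\to\ker\pi_Y\to 0$ and argue that the absence of line bundle summands in the outer terms (by minimality of $\pi_X,\pi_Y$) is inherited by $\ker\Phi$. The subtle point is that a line bundle summand of $\pc{X}\oplus\pc{Y}$ need not be a summand of either individual factor, so ``diagonal'' line bundle summands must be ruled out by combining $\pi_Y\iota_Y=0$ (obtained by applying $g$ to the relation $\Phi\iota=0$) with the factorization $\pi_Y^*j=fh$ arising from $g\pi_Y^*j=\pi_Y j=0$, reducing to the minimality of the outer covers.
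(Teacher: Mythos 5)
Your first three steps are sound and coincide with what the paper treats as routine: the verification that $\ExtX(\pc{Y},X)=0$ forces $(\star)$ to be distinguished exact is exactly the paper's argument (lift $L\to Y$ through $\pi_Y$, then lift $\pi_Y$ through $g$ using the vanishing of $\ExtX(\pc{Y},X)$), and your explicit check that $\Phi=[f\pi_X,\pi_Y^*]$ is a distinguished epimorphism is the content the paper dismisses as ``standard arguments of (relative) homological algebra.''

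The genuine gap is minimality, precisely where you anticipate trouble. Your snake-lemma plan is only a sketch and, as sketched, does not close: from a line bundle summand $\iota=(i_1,i_2):L\to\pc{X}\oplus\pc{Y}$ with $\Phi\iota=0$ you obtain $\pi_Y i_2=0$, but minimality of $\pi_Y$ does not give $i_2=0$; it only says $i_2$ factors through $\ker\pi_Y$, and ruling out such ``diagonal'' summands from that information alone requires a further argument you do not supply. The paper avoids this entirely by a rank count specific to weight type $(2,a,b)$: by Theorem \ref{Thm:2ab} every indecomposable bundle $F$ of rank at least two sits in a distinguished exact sequence $0\to F(-\vx_1)\to\pc{F}\to F\to 0$, so $\rk\pc{F}=2\rk F$; hence $\rk(\pc{X}\oplus\pc{Y})=2\rk X+2\rk Y=2\rk E=\rk\pc{E}$, and since the actual projective cover $\pc{E}$ is a direct summand of any projective admitting a distinguished epimorphism onto $E$, equality of ranks forces $\pc{X}\oplus\pc{Y}\cong\pc{E}$. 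You should replace your summand-chasing sketch by this rank argument (note it is the only place where the hypothesis of weight type $(2,a,b)$ is actually used).
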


\begin{pf}
We show that the condition $\ExtX(\pc{Y},X)=0$ implies that ($\star$) is distinguished exact. Indeed, applying the functor $\HomX(\pc{Y},-)$ to the exact sequence $(\star)$, we obtain a short exact sequence
$$0\lra \HomX(\pc{Y},X)\stackrel{f\circ-}{\lra} \HomX(\pc{Y},E) \stackrel{g\circ-}{\lra} \HomX(\pc{Y},Y)\lra \ExtX(\pc{Y},X)=0$$ showing that each morphism from $\pc{Y}$ to $X$ lifts to $E$. By definition of the projective cover, further each morphism from a line bundle $L$ to $Y$ lifts to $\pc{Y}$. Putting things together, any morphism from a line bundle $L$ to $Y$ lifts to a map $L\to E$, showing that ($\star$) is distinguished exact.
Any of the two assumptions thus ensures that $\pi_Y$ lifts to a map $\alpha:\pc{Y}\to E$, yielding the above diagram, and its claimed properties, by standard arguments of (relative) homological algebra.

It remains to point out why the minimality condition for the distinguished epimorphism $\pi_E:\pc{X}\oplus\pc{Y}\to E$ holds. Because of weight type $(2,a,b)$, we know from Theorem \ref{Thm:2ab} that $\pc{E}$ and $\pc{X}\oplus \pc{Y}$ have the same rank $2\rk E$, which ensures the claim.
\end{pf}

We keep to assume weight type $(2,a,b)$. To obtain minimal projective resolutions, we have to determine those morphisms that are compositions of a projective cover $\pc{E}\stackrel{\pi_E}{\lra}E$ with the corresponding injective hull $E\stackrel{j_E}{\lra}\ih{E}$. The resulting morphism $u_E:\pc{E}\to \ih{E}$, $u_E=j_E\,\pi_E$, will be called a \emph{cover morphism} for $E$. (Note that such cover morphisms depend on the chosen projective cover and injective hull). Again, we reduce the determination of cover morphisms to the case of smaller rank.

\begin{Lem}
	\label{matrix_frame_from_sequence}
Let $\XX$ be of weight type $(2,a,b)$.
Let $ (\star)\quad 0\lra X \stackrel{f}{\lra} E \stackrel{g}{\lra} Y\lra 0,$ be a distinguished exact sequence in $\vect\XX$. (This condition is satisfied if $(\star)$ is exact and if $\ExtX(\pc{Y},X)=0$). Let $\overline{u}_X$ (respectively $\overline{u}_Y$) be a cover morphisms for $X$ (respectively $Y$).

Then we obtain a cover morphism for $E$ having shape
$$\overline{u}_E=\left[\begin{array}{c|c}
\overline{u}_X&\beta\circ \alpha\\
\hline
0&\overline{u}_Y
\end{array}\right]$$
 where
$g\circ\alpha=\pi_Y$, and $\beta\circ f=j_X$.
\end{Lem}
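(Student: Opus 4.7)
The plan is to compose Lemma~\ref{proj_cover_from_sequence} with its dual for injective hulls, and then to read off $\overline{u}_E = j_E \circ \pi_E$ as a $2 \times 2$ block matrix.

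Applying Lemma~\ref{proj_cover_from_sequence} to $(\star)$ furnishes a lift $\alpha : \pc{Y} \to E$ with $g \circ \alpha = \pi_Y$ and realises the projective cover of $E$ as
$$\pi_E = [\,f \circ \pi_X,\ \alpha\,] : \pc{X} \oplus \pc{Y} \lra E.$$
The injective-hull analogue is proved by exactly the same argument in the opposite category, which applies because distinguished exactness in $\vect\XX$ is self-dual: line bundles are projective-injective in the Frobenius structure, so $(\star)$ being distinguished-exact is equivalent to $\HomX(-,L)$ being exact on $(\star)$ for every line bundle $L$. It produces an extension $\beta : E \to \ih{X}$ with $\beta \circ f = j_X$, together with
$$j_E = \begin{bmatrix} \beta \\ j_Y \circ g \end{bmatrix} : E \lra \ih{X} \oplus \ih{Y}.$$
Minimality of $j_E$ follows from the dual of the rank comparison $\rk \ih{E} = 2\,\rk E$ from Theorem~\ref{Thm:2ab}, just as minimality of $\pi_E$ was obtained in Lemma~\ref{proj_cover_from_sequence}.

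Multiplying the two block matrices, $\overline{u}_E = j_E \circ \pi_E$ acquires upper-left entry $\beta \circ f \circ \pi_X = j_X \circ \pi_X = \overline{u}_X$; lower-left $j_Y \circ g \circ f \circ \pi_X = 0$ (since $g \circ f = 0$); lower-right $j_Y \circ g \circ \alpha = j_Y \circ \pi_Y = \overline{u}_Y$; and upper-right precisely $\beta \circ \alpha$. This yields the asserted block-triangular form together with the side conditions $g \circ \alpha = \pi_Y$ and $\beta \circ f = j_X$.

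The only real item to verify, which I regard as the main obstacle, is the dual of Lemma~\ref{proj_cover_from_sequence}. Once self-duality of distinguished exactness is invoked, the argument transposes verbatim, and the remainder is straightforward block-matrix bookkeeping together with the two identities $j^*_X \circ f = j_X$ and $g \circ \pi^*_Y = \pi_Y$ coming from the defining properties of the extension $\beta$ and the lift $\alpha$.
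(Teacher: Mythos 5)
Your proposal is correct and follows essentially the same route as the paper: the paper likewise obtains the three-row commutative diagram from Lemma~\ref{proj_cover_from_sequence} together with its (asserted) dual for injective hulls, and then reads off $\overline{u}_E=j_E\circ\pi_E$ as the block product $\left[\begin{smallmatrix}\beta\\ j_Y\circ g\end{smallmatrix}\right]\circ[f\circ\pi_X,\ \alpha]$, using $g\circ f=0$ for the zero block. The only difference is that you spell out why the dual of Lemma~\ref{proj_cover_from_sequence} holds (self-duality of distinguished exactness, line bundles being projective-injective), a point the paper dispatches with the remark preceding that lemma.
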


\begin{pf}
From Lemma \ref{proj_cover_from_sequence}
we obtain the following commutative diagram with exact rows
\newcommand{\pczero}[1]{\mathfrak{P}_0(#1)}
\newcommand{\pcone}[1]{\mathfrak{P}_1(#1)}
\newcommand\mati{\left[\begin{array}{l}1\\0\end{array}\right]}
\newcommand\matpi{\left[\begin{array}{ll}0&1\end{array}\right]}
\newcommand\matproj{\left[\begin{array}{ll}f\circ\pi_{X},&\alpha\end{array}\right]}
\newcommand\matinj{\left[\begin{array}{c}\beta\\ j_{ Y}\circ g\end{array}\right]}
$$\xymatrix{
0\ar[r]&{\pc{X}}\ar[rr]^-{\mati}\ar[dd]_-{\pi_{X}}&&{\pc{X}\oplus \pc{Y}} \ar[rr]^-{\matpi}\ar[dd]^-{\matproj}&&{\pc{Y}}\ar[r]\ar[dd]^-{\pi_{Y}}&0\\ \\
0\ar[r]&X\ar[rr]^-{f}\ar[dd]_-{j_{X}}&&E\ar[dd]^-{\matinj}\ar[rr]^{g}&&Y\ar[r]\ar[dd]^-{j_{ Y}}&0\\ \\
0\ar[r]&{\ih{X}}\ar[rr]^-{\mati}&&{\ih{X}\oplus \ih{Y}}\ar[rr]^-\matpi&&{\ih{Y}}\ar[r]&0.}$$

Therefore $\overline{u}_E=\overline{u}_{E(\vx_1)}=
\left[\begin{array}{c}
\beta\\
j_Y\circ g
\end{array}\right]\circ
\left[\begin{array}{cc}
f\circ \pi_X&\alpha
\end{array}\right]=
\left[\begin{array}{c|c}
\beta\circ f\circ \pi_X&\beta\circ \alpha\\
\hline
j_Y\circ g\circ f\circ \pi_X& j_Y\circ g\circ \alpha
\end{array}\right]=
\left[\begin{array}{c|c}
j_X\circ \pi_X&\beta\circ \alpha\\
\hline
0& j_Y\circ \pi_Y
\end{array}\right]=
\left[\begin{array}{c|c}
\overline{u}_{X(\vx_1)}&\beta\circ \alpha\\
\hline
0&\overline{u}_{Y(\vx_1)}
\end{array}\right]=
\left[\begin{array}{c|c}
\overline{u}_{X}&\beta\circ \alpha\\
\hline
0&\overline{u}_{Y}
\end{array}\right].$
\end{pf}

From now on, we restrict to weighted projective lines $\XX$ of domestic type. For each of the weight types $(2,2,n)$, $(2,3,3)$, $(2,3,4)$ and $(2,3,5)$ we determine in Propositions \ref{proj_cover_2_2_n} to \ref{proj_cover_2_3_5} the projective covers of indecomposable vector bundles, by Theorem \ref{Thm:2ab} thus also their injective hulls. The following propositions list from each Auslander-Reiten orbit of indecomposables of rank $r\geq2$ a particular member, say $E$, and represent its projective cover $\pc{E}=\bigoplus_{i=1}^{2r}\Oo(\vy_i)$ by the sequence $\vy_1,\vy_2,\ldots,\vy_{2r}$ (including multiplicities). For the switch $\ih{E}=\pc{E}(\vx_1)$ from projective covers to injective hulls we further refer to the interpretation of the degree shift by $\vx_1$ given in Remark \ref{Remark_shift_by_x_1}.

\subsection*{Case $(2,2,n)$}

In this case the Auslander-Reiten quiver of $\vect\XX$ has shape $\ZZ\widetilde\DD_{n+2}$. Note that we need to distinguish the cases $n$ even (resp. $n$ odd): For a given integral slope, there are exactly $4$ (resp. $2$) line bundles if  $n$ is even (resp. if $n$ is odd).

\newcommand{\evencase}{\xymatrix@C18pt@R18pt  @!0{
\c\ar@{..>}[rd]&&\c\ar@{..>}[rd]&&\c\ar@{..>}[rd]&&\c\ar@{..>}[rd]&&\tc{(\vx_1+\vom)}\\
\c\ar@{..>}[r]&\b\ar@{..>}[ru]\ar@{..>}[r]\ar[rd]&\c\ar@{..>}[r]&\b\ar[rd]\ar@{..>}[r]\ar@{..>}[ru]&\c\ar@{..>}[r]&
\b\ar[rd]\ar@{..>}[r]\ar@{..>}[ru]&\c\ar@{..>}[r]&\tb{E_{n-2}}\ar[rd]\ar@{..>}[r]\ar@{..>}[ru]&\bc{(\vx_2+\vom)}\\
\b\ar[rd]\ar[ru]&&\b\ar[rd]\ar[ru]&&\b\ar[rd]\ar[ru]&&\tb{E_{n-3}}\ar[rd]\ar[ru]&&\b\\
&\b\ar[ru]&&\b\ar[ru]&&\tb{E_{n-4}}\ar[ru]&&\b\ar[ru]&\\
\b\ar[rd]\ar@{.}[ru]&&\b\ar[rd]\ar@{.}[ru]&&\tb{E_3}\ar[rd]\ar@{.}[ru]&&\b\ar[rd]\ar@{.}[ru]&&\b\\
&\b\ar[rd]\ar[ru]&&\tb{E_2}\ar[rd]\ar[ru]&&\b\ar[rd]\ar[ru]&&\b\ar[rd]\ar[ru]&\\
\b\ar[rd]\ar[ru]&&\tb{E_1}\ar[rd]\ar[ru]&&\b\ar[rd]\ar[ru]&&\b\ar[rd]\ar[ru]&&\b\\
\bc{(-\vx_3)}\ar@{..>}[r]&\tb{E_0}\ar@{..>}[rd]\ar[ru]\ar@{..>}[r]&\c\ar@{..>}[r]&\b\ar@{..>}[rd]
\ar[ru]\ar@{..>}[r]&\bc{(\vx_3)}\ar@{..>}[r]&\b\ar@{..>}[rd]\ar[ru]\ar@{..>}[r]&
\c\ar@{..>}[r]&\b\ar@{..>}[rd]\ar[ru]\ar@{..>}[r]&\bc{(3\vx_3)}\\
\bc{(\vom)}\ar@{..>}[ru]&&\bc{(\vnull)}\ar@{..>}[ru]&&\c\ar@{..>}[ru]&&\bc{(2\vx_3)}\ar@{..>}[ru]&&\c\\}}

\newcommand{\oddcase}{\xymatrix @C18pt@R18pt @!0{
&\c\ar@{..>}[rd]&&\c\ar@{..>}[rd]&&\c\ar@{..>}[rd]&&\tc{(\vx_1+\vom)}\ar@{..>}[rd]&\\
\b\ar@{..>}[ru]\ar[rd]\ar@{..>}[r]&\c\ar@{..>}[r]&\b\ar@{..>}[ru]
\ar[rd]\ar@{..>}[r]&\c\ar@{..>}[r]&\b\ar@{..>}[ru]\ar[rd]\ar@{..>}[r]&\c\ar@{..>}[r]&
\tb{E_{n-2}}\ar@{..>}[ru]\ar[rd]\ar@{..>}[r] &\bc{(\vx_2+\vom)}\ar@{..>}[r]&\b\\
&\b\ar[ru]&&\b\ar[ru]&&\tb{E_{n-3}}\ar[ru]&&\b\ar[ru]&\\
\b\ar[rd]\ar@{.}[ru]&&\b\ar[rd]\ar@{.}[ru]&&\tb{E_3}\ar[rd]\ar@{.}[ru]&&\b\ar[rd]\ar@{.}[ru]&&\b\\
&\b\ar[rd]\ar[ru]&&\tb{E_2}\ar[rd]\ar[ru]&&\b\ar[rd]\ar[ru]&&\b\ar[rd]\ar[ru]&\\
\b\ar[rd]\ar[ru]&&\tb{E_1}\ar[rd]\ar[ru]&&\b\ar[rd]\ar[ru]&&\b\ar[rd]\ar[ru]&&\b\\
\bc{(-\vx_3)}\ar@{..>}[r]&\tb{E_0}\ar@{..>}[rd]\ar[ru]\ar@{..>}[r]&\c\ar@{..>}[r]&\b\ar@{..>}[rd]
\ar[ru]\ar@{..>}[r]&\bc{(\vx_3)}\ar@{..>}[r]&\b\ar@{..>}[rd]\ar[ru]\ar@{..>}[r]&
\c\ar@{..>}[r]&\b\ar@{..>}[rd]\ar[ru]\ar@{..>}[r]&\bc{(3\vx_3)}\\
\bc{(\vom)}\ar@{..>}[ru]&&\bc{(\vnull)}\ar@{..>}[ru]&&\c\ar@{..>}[ru]&&\bc{(2\vx_3)}\ar@{..>}[ru]&&\c\\}}
\tiny
$$
\begin{array}{cc}
\evencase & \oddcase \\
\textit{n} \textrm{ even} & \textit{n} \textrm{ odd }
\end{array}
$$
\normalsize
Here, the symbol $\bullet$ ( resp. $\circ$) marks the position of a vector bundle of rank $2$ (resp.\  of a line bundle). Specifically, for key values of $\vx$ in $\LL$, the position of the line bundle $\Oo(\vx)$ is indicated by the bracket-symbol $(\vx)$. Moreover, we have marked the position of $n-1$ vector bundles $E_0,\ldots,E_{n-2}$ of rank two, one for each $\tau$-orbit.

\begin{Prop}
	\label{proj_cover_2_2_n}
We assume weight type $(2,2,n)$ and refer to the notations from the above figure. Each indecomposable vector bundle, that is not a line bundle, has rank two. It further lies in the $\tau$-orbit of exactly one of the extension bundles $E_0,\ldots,E_{n-2}$,
where  $E_i$ is determined by the pair $(\Oo, i\vx_3 )$. Moreover, we have
$$\pc{E_i}=\Oo(\vom)\oplus \Oo(i\vx_3-\vx_1)\oplus \Oo(i\vx_3-\vx_2)\oplus \Oo(-\vx_3).
$$
\end{Prop}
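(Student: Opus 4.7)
The claim has two independent components: (i) the structural assertion that every indecomposable non-line-bundle has rank $2$ and that there are exactly $n-1$ such $\tau$-orbits, represented by $E_0,\dots,E_{n-2}$; and (ii) the explicit projective-cover formula. My plan is to handle (ii) as an immediate consequence of Theorem~\ref{KLM_proj_cov}, and to deduce (i) from the structure of the AR-quiver $\ZZ\widetilde\DD_{n+2}$ combined with the parametrization of rank-$2$ bundles from the preceding theorem on rank-$2$ vector bundles.

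For (i), I would first recall that the AR-quiver of $\vect\XX$ is $\ZZ\widetilde\DD_{n+2}$, as stated in the table at the end of Section~\ref{sect:basic.concepts}. The standard additive function on $\widetilde\DD_{n+2}$ takes value $1$ on the four pendant vertices and value $2$ on the remaining $n-1$ spine vertices. Since the rank function is constant on $\tau$-orbits and additive on meshes, it agrees with this additive function. Consequently every indecomposable non-line-bundle has rank exactly $2$, and there are precisely $n-1$ $\tau$-orbits of rank-$2$ bundles. Invoking now the rank-two classification, each such bundle is of the form $\extb{L}{\vx}$ with $\vnull\leq\vx\leq\vdom$. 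For weight triple $(2,2,n)$ we have $\vdom=(n-2)\vx_3$, so $\vx$ must lie in $\{0,\vx_3,2\vx_3,\dots,(n-2)\vx_3\}$, giving exactly $n-1$ possible values. Using the shift identity $\extb{L}{\vx}(\vy)\cong\extb{L(\vy)}{\vx}$, every rank-$2$ indecomposable is a line-bundle twist of some $\extbo{i\vx_3}$. A brief check using the uniqueness of the extension-bundle description rules out $E_i\cong E_j(m\vom)$ for $i\neq j$, so the $E_i$ represent the $n-1$ distinct rank-$2$ orbits.

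For (ii), I apply Theorem~\ref{KLM_proj_cov} directly with $L=\Oo$ and $\vx=i\vx_3$, so that $l_1=l_2=0$ and $l_3=i$. The four Hom-orthogonal summands of $\pc{E_i}$ become
$L(\vw)=\Oo(\vom)$, $L(\vx-(1+l_1)\vx_1)=\Oo(i\vx_3-\vx_1)$, $L(\vx-(1+l_2)\vx_2)=\Oo(i\vx_3-\vx_2)$, and $L(\vx-(1+l_3)\vx_3)=\Oo(i\vx_3-(i+1)\vx_3)=\Oo(-\vx_3)$, matching the claimed expression.

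The only genuinely delicate point is the separation of $\tau$-orbits, which I would pin down via the observation that $\vom$ generates a rank-one free subgroup of $\LL$, so two shifts $\Oo(m\vom)$ and $\Oo(m'\vom)$ coincide only when $m=m'$; this, together with the uniqueness of the pair $(L,\vx)$ in the extension-bundle presentation (with $\vx$ constrained to the fundamental domain $[\vnull,\vdom]$), gives the needed injectivity of the map $i\mapsto[E_i]_\tau$. Every other step is a straightforward combination of results already available in Sections~\ref{sect:introduction}–\ref{sect:basic.concepts}.
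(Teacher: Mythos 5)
Your derivation of the projective-cover formula is exactly the paper's: identify $E_i$ with the extension bundle $\extbo{i\vx_3}$ and substitute $l_1=l_2=0$, $l_3=i$ into Theorem~\ref{KLM_proj_cov}; that part is correct and complete. Where you diverge is in the structural part. The paper simply exhibits, for the object $E_i$ sitting at the marked spine position of the displayed $\ZZ\widetilde\DD_{n+2}$, a non-split sequence $0\to\Oo(\vom)\to E_i\to\Oo(i\vx_3)\to 0$, and concludes $E_i\cong\extbo{i\vx_3}$; the rank and orbit statements are read off from the quiver. You instead argue abstractly: rank is a positive additive function constant on $\tauX$-orbits, hence equals the null root of $\widetilde\DD_{n+2}$ (giving rank $2$ on the $n-1$ spine orbits and $1$ on the four pendant ones), and then you match the $n-1$ orbits against the $n-1$ admissible values $\vx=i\vx_3$ with $\vnull\leq\vx\leq\vdom=(n-2)\vx_3$. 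This is a legitimate and somewhat more self-contained route; what it buys is independence from the particular labelling in the figure, at the cost of having to prove an injectivity statement that the paper's positional definition of the $E_i$ makes automatic.

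That injectivity is the one place where your argument is thinner than it should be. You invoke ``uniqueness of the pair $(L,\vx)$ in the extension-bundle presentation,'' but the theorem as quoted in this paper only asserts existence of \emph{a suitable} pair, and the cheap invariants genuinely fail here: in $\LL(2,2,n)$ one has $2\vom=-2\vx_3$, so $\det(\tauX^{-1}E_{i+2})=\det(E_{i+2})-2\vom=\vom+i\vx_3=\det(E_i)$, i.e.\ rank, degree and determinant do not separate the $\tauX$-orbits of $E_i$ and $E_{i+2}$. So the ``brief check'' really does need an argument. A clean one inside the paper's toolkit: by Theorem~\ref{KLM_proj_cov} the four line-bundle summands of $\pc{E_j}(m\vom)$ are computed explicitly, and a short normal-form computation in $\LL(2,2,n)$ (e.g.\ $\Oo(\vom)$ is never isomorphic to any summand of $\pc{E_{i+2m}}(m\vom)$ for $m\neq 0$, since $\vom$ has normal form $\vx_1+\vx_2+(n-1)\vx_3-2\vc$ with nonzero $\vx_1,\vx_2$-coordinates) shows the covers differ; Proposition~\ref{prop:determined_by_cover:abc} then gives $E_i\not\cong\tauX^{m}E_j$ for $i\neq j$. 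With that point repaired, your proof is correct.
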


\begin{pf}
There are non-split exact sequences $0\lra \Oo(\vw)\lra E_i\lra \Oo(i\vx_3)\lra 0$. Therefore $E_i$ is isomorphic to the extension bundle $\extbo{i\vx_3}$, and the claim concerning projective covers follows from Theorem \ref{KLM_proj_cov}.
\end{pf}

\subsection*{Case $(2,3,3)$}

In this case, we have three $\tau$-orbits of line bundles, and the Auslander-Reiten quiver of $\vect\XX$ has shape $\ZZ\widetilde\EE_6$:
{\footnotesize
$$\xymatrix @=5.5pt @!0 {&&&&&&&&&&&&&&&&&&&&&&&&&&&&&&&&&&&&&&&&&&&&&&&&&&&&&&&&&&&&\\
&&&&&&&&&&&&&&&&&&&&&&&&&&&&&&&&&&&&&&&&&&&&&&&&&&&&&&&&&&&&\\
&&&&&&&&&&&&&&&&&&&&&&&&&&&&&&&&&&&&&&&&&&&&&&&&*+<1pt>{\c}&&&&&&&&&&&\\
&&&&&&&&&&&&&&&&&&&&&&&&&&&&&&&&&&&&&&&&&&&&&&&&&&&&&&&&&&&&\\
&&&&&&&&&&&&&&&&&&&&&&&&&&&&&&&&&&&&&&*+<1pt>{\c}\ar @{-}@[ashy][-2,10]\ar @{..>}[4,5] &&&&&&&&&&&&&&&&&&&\\
&&&&&&&&&&&&&&&&&&&&&&&&&&&&&&&&&&&&&&&&&&&&&&&&&&&&&&&&&&&&\\
&&&&&&&&&&&&&&&&&&&&&&&&&&&&*+<1pt>{\tcv{x_3}}\ar @{-}@[ashy][-2,10]\ar @{..>}[4,5] &&&&&&&&&&&&&&&&&&&&&&&&&&&&&&&&\\
&&&&&&&&&&&&&&&&&&&&&&&&&&&&&&&&&&&&&&&&&&&&&&&&&&&&&&&&&&&&\\
&&&&&&&&&&&&&&&&&&*+<1pt>{\c}\ar @{-}@[ashy][-2,10]\ar @{..>}[4,5] &&&&&&&&&&&&&&&&&&&&&&&&&\b\ar @{..>}[-6,5]\ar [4,5] &&&&&&&&&&&&&&&&&\\
&&&&&&&&&&&&&&&&&&&&&&&&&&&&&&&&&&&&&&&&&&&&&&&&&&&&&&&&&&\\
&&&&&&&&*+<1pt>{\c}\ar @{-}@[ashy][-2,10]\ar @{..>}[4,5] &&&&&&&&&&&&&&&&&&&&&&&&&\b\ar @{..>}[-6,5]\ar [4,5] &&&&&&&&&&&&&&&&&&&&&&&&&&&\\
&&&&&&&&&&&&&&&&&&&&&&&&&&&&&&&&&&&&&&&&&&&&&&&&&&&&&&&&&&\\
&&&&&&&&&&&&&&&&&&&&&&&\b\ar @{..>}[-6,5]\ar [4,5] &&&&&&&&&&&&&&&&&&&&&&&&&*+<1pt>{\b}\ar @{-}@[ashy][-10,0]\ar @{--}@[ashy][6,-8] &&&&&&&&&&&&\\
&&&&&&&&&&&&&&&&&&&&&&&&&&&&&&&&&&&&&&&&&&&&&&&&&&&&&&&&&&&&\\
&&&&&&&&&&&&&\tb{F_2}\ar @{..>}[-6,5]\ar [4,5] &&&&&&&&&&&&&&&&&&&&&&&&&*+<1pt>{\b}\ar @{-}@[ashy][-2,10]\ar [-6,5]\ar[2,9] &&&&&&&&&&&&&&&&&&&&&&\\
&&&&&&&&&&&&&&&&&&&&&&&&&&&&&&&&&&&&&&&&&&&&&&&&&&&&&&&&&&&&\\
&&&&&&&&&&&&&&&&&&&&&&&&&&&&*+<1pt>{\b}\ar @{-}@[ashy][-2,10]\ar [-6,5]\ar[2,9] &&&&&&&&&&&&&&&&&&&\b\ar @{..>}[2,9]\ar [-4,1]&&&&&&&&&&&&&\\
&&&&&&&&&&&&&&&&&&&&&&&&&&&&&&&&&&&&&&&&&&&&&&&&&&&&&&&&&&&&\\
&&&&&&&&&&&&&&&&&&*+<1pt>{\b}\ar @{-}@[ashy][-2,10]\ar [-6,5]\ar[2,9] &&&&&&&&&&&&&&&&&&&\b\ar @{..>}[2,9]\ar [-4,1]&&&&&&&&&&&&&&&&&&&*+<1pt>{\c}\ar @{-}@[ashy][-6,-8]&&&&\\
&&&&&&&&&&&&&&&&&&&&&&&&&&&&&&&&&&&&&&&&&&&&&&&&&&&&&&&&&&&&\\
&&&&&&&&*+<1pt>{\tb{E_3}}\ar @{-}@[ashy][-10,0]\ar @{-}@[ashy][-2,10]\ar @{-}@[ashy][6,8] \ar [-6,5]\ar[2,9]\ar[2,1] &&&&&&&&&&&&&&&&&&&\b\ar @{..>}[2,9]\ar [-4,1] &&&&&&&&&&&&&&&&&&&*+<1pt>{\c}\ar @{-}@[ashy][-2,10]\ar @{..>}[-4,1]&&&&&&&&&&&&&&\\
&&&&&&&&&&&&&&&&&&&&&&&&&&&&&&&&&&&&&&&&&&&&&&&&&&&&&&&&&&&&\\
&&&&&&&&&{\bb{E_2}}\ar @{..>}[2,1]\ar @{-->}[-4,9] &&&&&&&&\bb{G_2}\ar @{..>}[2,9]\ar [-4,1] &&&&&&&&&&&&&&&&&&&*+<1pt>{\bcv{x_2}}\ar @{-}@[ashy][-2,10]\ar @{..>}[-4,1]&&&&&&&&&&&&&&&&&&&&&&&&\\
&&&&&&&&&&&&&&&&&&&&&&&&&&&&&&&&&&&&&&&&&&&&&&&&&&&&&&&&&&&&\\
&&&&&&&&&&*+<1pt>{\c}\ar @{--}@[ashy][-6,30]&&&&&&&&&&&&&&&&*+<1pt>{\c}\ar @{-}@[ashy][-2,10]\ar @{..>}[-4,1]&&&&&&&&&&&&&&&&&&&&&&&&&&&&&&&&&&\\
&&&&&&&&&&&&&&&&&&&&&&&&&&&&&&&&&&&&&&&&&&&&&&&&&&&&&&&&&&&&\\
*+<1pt>{\bcv{0}}\ar @{..>}[-4,9]\ar @{-}@[ashy][-2,10]\ar @{-}@[ashy][-6,8]  &&&&&&&&&&&&&&&&*+<1pt>{\c}\ar @{..>}[-4,1]\ar @{-}@[ashy][-2,10]&&&&&&&&&&&&&&&&&&&&&&&&&&&&&&&&&&&&&&&&&&&&\\}$$}
For each $\tau$-orbit of line bundles, we select one member, in bracket-symbol notation the line bundles $\Oo$, $\Oo(\vx_2)$ and $\Oo(\vx_3)$. Also, as indicated in the figure, for each of the four remaining $\tau$-orbits, we select one member, resulting in three rank-two bundles $E_2$, $F_2$ and $G_2$ and one bundle $E_3$ of rank three.
\begin{Prop}
	\label{proj_cover_2_3_3}
We assume weight type $(2,3,3)$ and refer to the notations from the above picture. Then each indecomposable bundle of rank at least two, lies in the $\tau$-orbit of exactly one of the vector bundles $E_2$, $F_2$, $G_2$ and $E_3$, where the subindex indicates the rank. The projective covers of these vector bundles are given by the table below.

\begin{tabular}{c|l}
{vector bundle}& {projective cover}\\
\hline
$E_2$& $\vec{0}$, $-\vw-\vx_1$, $-\vw-\vx_2$, $-\vw-\vx_3$\\
$F_2$& $\vx_2-\vx_3$, $-\vx_2$, $\vx_2-\vx_1$, $\vw$\\
$G_2$& $2\vx_2-2\vx_3$, $-\vx_3$, $\vw$, $\vx_3-\vx_1$\\
\hline
$E_3$& $\vw$, $2\vw$, $\vx_3+3\vw$, $\vx_3+4\vw$, $\vx_2+3\vw$, $\vx_2+4\vw$
\end{tabular}
\end{Prop}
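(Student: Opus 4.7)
The plan is to proceed in three steps: first counting the $\tau$-orbits, then identifying the rank-two representatives as extension bundles and applying Theorem \ref{KLM_proj_cov}, and finally handling the rank-three orbit $E_3$ via an extended horseshoe argument.

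For Step 1 I use that the Auslander-Reiten quiver of $\vect\XX$ has shape $\ZZ\widetilde{\EE}_6$, giving exactly seven $\tau$-orbits (one per vertex of $\widetilde{\EE}_6$). Since the rank function is $\tau$-invariant and coincides with the standard additive function on $\widetilde{\EE}_6$, there are three rank-one orbits (the line bundles $\Oo$, $\Oo(\vx_2)$, $\Oo(\vx_3)$), three rank-two orbits (through $E_2, F_2, G_2$), and one rank-three orbit (through $E_3$), which exhausts the list and establishes the first claim.

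For Step 2, the rank-two classification theorem stated above says each rank-two indecomposable is an extension bundle $\extb{L}{\vx}$ with $\vnull \leq \vx \leq \vd = \vx_2+\vx_3$. Matching each of $E_2, F_2, G_2$ against the finitely many admissible pairs by reading its position in the AR-quiver relative to the chosen structure sheaf, I expect the identifications $E_2 \cong \extb{\Oo(-\vw)}{\vnull}$, $F_2 \cong \extbo{\vx_2}$, $G_2 \cong \extbo{\vx_3}$. Plugging each into the projective-cover formula of Theorem \ref{KLM_proj_cov} then reproduces the four-summand rows of the table; the relation $3\vx_2 = 3\vx_3$ in $\LL$ is needed to reconcile one entry in the $G_2$ row.

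Step 3 is more delicate, because $\rk E_3 = 3$ forbids applying Lemma \ref{proj_cover_from_sequence} directly to a short exact sequence with $E_3$ as middle term: the rank match in the lemma's proof requires both endpoints of rank at least two. My plan is to apply the lemma instead to a distinguished exact sequence whose middle term is $E_3 \oplus L$ for a suitably chosen line bundle $L$, with both endpoints $X, Y$ rank-two indecomposables already treated in Step 2. Such a sequence is supplied by the almost-split sequence in the Frobenius category $\vect\XX$ ending at one of the rank-two orbits, whose middle term has the shape $E_3 \oplus L$ and which is distinguished exact by the exact structure on $\vect\XX$. The horseshoe yields a distinguished epi $\pc{X}\oplus\pc{Y}\to E_3 \oplus L$ of rank $8$, whereas $\pc{E_3 \oplus L} = \pc{E_3}\oplus L$ has rank $7$ by Theorem \ref{Thm:2ab}. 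Hence exactly two line-bundle summands of $\pc{X}\oplus\pc{Y}$ must cancel: one matching the $L$ on the right and one accounting for the non-minimality caused by $E_3 \oplus L$ being decomposable. Identifying these two summands via the Hom-orthogonality of Theorem \ref{KLM_proj_cov} and the degree-shift pattern dictated by the AR positions, and then removing them, will yield the six summands $\vw, 2\vw, \vx_3+3\vw, \vx_3+4\vw, \vx_2+3\vw, \vx_2+4\vw$ listed for $\pc{E_3}$.

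The main obstacle is Step 3: pinning down the correct AR-sequence and the specific cancelable summands, and verifying that the remaining six line bundles match the tabled positions on the nose. Steps 1 and 2 amount to bookkeeping once the AR-quiver layout is fixed.
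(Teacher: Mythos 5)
Your Steps 1 and 2 essentially reproduce the paper's route: the paper identifies $E_2$, $F_2$, $G_2$ as the middle terms $H_i$ of the almost-split sequences $0\to\Oo(i\vu)\to H_i\to\Oo(i\vu-\vom)\to 0$ with $\vu=\vx_2-\vx_3$, i.e.\ as Auslander bundles $\extb{\Oo(i\vu-\vom)}{0}$, and then applies Theorem \ref{KLM_proj_cov}; your alternative identifications $F_2\cong\extbo{\vx_2}$, $G_2\cong\extbo{\vx_3}$ produce the same four summands (using $3\vx_2=3\vx_3$) and are justified by Proposition \ref{prop:determined_by_cover:abc}. Where you genuinely diverge is $E_3$. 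The paper notes that every distinguished exact sequence with rank-three middle term splits, abandons the horseshoe entirely, and argues directly: it takes the two (non-distinguished) sequences $0\to E_2(\vom)\to E_3\to\Oo(-\vom)\to 0$ and $0\to\Oo(\vom)\to E_2(\vom)\to\Oo\to 0$, lifts the component maps $x_i$ using $\ExtX$-vanishing, and checks by hand that every map from a line bundle into $E_3$ factors through the resulting six maps, with minimality coming from Hom-orthogonality or Theorem \ref{Thm:2ab}. Your idea of running the horseshoe on the sequence $0\to\tauX E_2\to E_3\oplus\Oo\to E_2\to 0$ --- which is indeed distinguished exact, since no map from a line bundle to a rank-two indecomposable is a split epimorphism --- is a genuinely different and, in principle, cleaner route; it buys you an answer for rank three without any explicit lifting computation.

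The gap is in the cancellation step. The horseshoe only yields a distinguished epimorphism $\pc{\tauX E_2}\oplus\pc{E_2}\to E_3\oplus\Oo$; since the projective cover is a direct summand of any such precover, Krull--Schmidt gives $\pc{\tauX E_2}\oplus\pc{E_2}\cong\pc{E_3}\oplus\Oo\oplus Q$ for a single line bundle $Q$, but ``Hom-orthogonality of Theorem \ref{KLM_proj_cov} and the degree-shift pattern'' does not identify $Q$, and this identification is the whole content of the step. What does pin it down is a degree count: from Theorem \ref{Thm:2ab} and \eqref{detformula} one gets $\det\pc{E_3}=2\det E_3-3\vx_1$, and $\deg E_3=0$ from the AR-sequence, hence $\deg\pc{E_3}=-9$; the eight candidates $\Oo(\vom)$, $\Oo(-\vx_1)$, $\Oo(-\vx_2)$, $\Oo(-\vx_3)$, $\Oo$, $\Oo(-\vom-\vx_1)$, $\Oo(-\vom-\vx_2)$, $\Oo(-\vom-\vx_3)$ have total degree $-12$, so $\deg Q=-3$, and $\Oo(-\vx_1)$ is the unique candidate of that degree. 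Removing $\Oo$ and $\Oo(-\vx_1)$ leaves exactly the six tabled summands, since $-\vom-\vx_1=2\vom$, $-\vom-\vx_2=\vx_3+3\vom$, $-\vom-\vx_3=\vx_2+3\vom$, $-\vx_2=\vx_3+4\vom$ and $-\vx_3=\vx_2+4\vom$ in $\LL(2,3,3)$. With that argument supplied, your proof closes; without it, the decisive step is only asserted.
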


\begin{pf}
With the generator $u:=\vx_2-\vx_3$ of the torsion group $t\LL$ of $\LL$, the extension term $H_i$ of the almost-split sequence $0\ra \Oo(i\vu)\ra H_i \ra \Oo(i\vu -\vom)\ra 0$ equals $E_2$, $F_2$ or $G_2$ for $i=0,1$ or $2$, respectively. By means of Theorem \ref{KLM_proj_cov}, the claim on their projective covers follows. It thus remains to determine the projective cover for $E_3$. We note that each distinguished exact sequence with middle term of rank three, necessarily splits. Hence for vector bundles of rank $3$ it is not possible to use the horse-shoe argument from Lemma \ref{proj_cover_from_sequence} in order to reduce the determination of projective covers to smaller rank. We thus need to give a direct argument:

Putting $F:=E_2(\vom)$, we obtain short exact sequences
$$0\to F\stackrel{i_{E_3}}{\lra} E_3 \stackrel{\pi _{E_3}}{\lra} \Oo(-\vw)\to 0,\quad 0\to \Oo(\vw)\stackrel{i_F}{\lra} F \stackrel{\pi _F}{\to} \Oo\to 0.$$
For $i=1,2,3$ there are non-zero maps $x_i^{(-\vw)}:\Oo(-\vw-\vx_i)\to \Oo(-\vw)$, $x_i:\Oo(-\vx_i)\to \Oo$. Because $\ExtX(\Oo(-\vw-\vx_i),\Oo(\vw))=0=\ExtX(\Oo(-\vw-\vx_i),\Oo)$ there are maps $y_i^{(-\vw)}:\Oo(-\vw-\vx_i)\to {E_3}$, such that $\pi_{E_3}\circ y_i^{(-\vw)}=x_i^{(-\vw)}$ for each $i=1,2,3$.
Since further $\ExtX({\Oo(-\vx_i)},{\Oo})=0$, there are maps $y_i:\Oo(-\vx_i) \to F$, such that $\pi_F \circ y_i=x_i$.
We now show that  each map $t:L\to {E_3}$, with $L$ a line bundle, factors through $\pi=(i_{E_3}\circ i_F,(i_{E_3}\circ y_i), (y_i^{(-\vw)}))_{i=1,2,3}$.

We can assume that $\pi_{E_3}\circ t:L\to \Oo(-\vw)$ is not an isomorphism.
Then $\pi_{E_3}\circ t=\sum\limits_{i=1}^3x_i^{(-\vw)}\circ t_i$, where $t_i:L\to \Oo(-\vx_i\-\vw)$. Hence $\pi_{E_3}\circ t=\sum\limits_{i=1}^3x_i^{(-\vw)}\circ t_i= \sum\limits_{i=1}^3 \pi_{E_3}\circ y_i^{(-\vw)}\circ t_i$, so $\pi_{E_3}\left(t- \sum\limits_{i=1}^3 y_i^{(-\vw)}\circ t_i\right)=0$. Therefore there is map $g:L\to F$ such that $i_{E_3}\circ g=t-\sum\limits_{i=1}^3 y_i^{(-\vw)}\circ t_i$.
Again, $\pi_F\circ g$ is not an isomorphism, so $\pi_F\circ g=\sum\limits_{i=1}^3x_i\circ g_i$ for some $g_i:L\to \Oo(-\vx_i)$. Then
$\pi_F(g-\sum\limits_{i=1}^3y_i\circ g_i)=0$. Hence there is a map $h:L\to \Oo(\vw)$, such that $i_F\circ h=g-\sum\limits_{i=1}^3y_i\circ g_i$.
Applying $i_{E_3}$ to this equality we obtain
$$t=i_{E_3}\circ i_F\circ h+\sum\limits_{i=1}^3 \left[y_i^{(-\vw)}\circ t_i+(i_{E_3}\circ y_i)\circ g_i\right].$$
Thus $t$ factors through $\pi=(i_{E_3}\circ i_F,(i_{E_3}\circ y_i), (y_i^{(-\vw)}))_{i=1,2,3}$.

Moreover, since $\HomX(\Oo(-\vx_1), \Oo(-\vw-\vx_2))$ is non-zero and the spaces $\HomX(\Oo(-\vw-\vx_2),{E_3})$ and  $\HomX(\Oo(-\vx_1),{E_3})$ are one-dimensional, we can factor $y_1:\Oo(-\vx_1)\to {E_3}$ through $y_2^{(-\vw)}:\Oo(-\vw-\vx_2)\to {E_3}$. It is easy to see that  the line bundles $\Oo(\vx_3+\vw)$, $\Oo(\vx_2+\vw)$, $\Oo(\vx_1+2\vw)$, $\Oo(\vx_3)$, $\Oo(\vx_2)$, $\Oo(\vx_1+\vw)$ are Hom-orthogonal, which implies minimality of $\pc{{E_3}}$. Alternatively, minimality can be deduced from Theorem \ref{Thm:2ab}.
\end{pf}

\subsection*{Case $(2,3,4)$}
In this case the Auslander-Reiten quiver of $\vect{\XX}$ has shape $\ZZ\widetilde\EE_7$. It contains two $\tau$-orbits of line bundles, three $\tau$-orbits of rank-two bundles, two $\tau$-orbits of rank-three bundles and one $\tau$-orbit of rank-four bundles. We chose, in addition to the structure sheaf, one member of each $\tau$-orbit, as indicated in the figure, and also mark the position of $\tau G_2$.
{
$$
\xymatrix @!0{
\c\ar@{..>}[rd]&&\c\ar@{..>}[rd]&&\c\ar@{..>}[rd]&&\c\ar@{..>}[rd]&&\tc{(\vx_3)}\ar@{..>}[rd]&&\c\\
&\b\ar@{..>}[ru]\ar[rd]&&\tb{E_2(\vx_1-2\vx_3)}\ar@{..>}[ru]\ar[rd]&&\b\ar@{..>}[ru]\ar[rd]&&\b\ar@{..>}[ru]\ar[rd]&&\b\ar@{..>}[ru]\ar[rd]&\\
\b\ar[ru]\ar[rd]&&\tb{E_3(\vx_1-2\vx_3)}\ar[ru]\ar[rd]&&\b\ar[ru]\ar[rd]&&\b\ar[ru]\ar[rd]&&\b\ar[ru]\ar[rd]&&\b\\
\b\ar[r] &\b\ar[r]\ar[ru]\ar[rd]& \tb{\tauX G_2}\ar[r]& \ar[r]\ar[ru] \ar[rd]\tb{E_4}&\tb{ G_2}\ar[r]& \b\ar[r]\ar[ru]\ar[rd] &\b\ar[r]& \b\ar[r]\ar[ru]\ar[rd] &\b\ar[r]&\b\ar[r]\ar[ru]\ar[rd]&\\
\b\ar[ru]\ar[rd]&&\tb{E_3}\ar[ru]\ar[rd]&&\b\ar[ru]\ar[rd]&&\b\ar[ru]\ar[rd]&&\b\ar[ru]\ar[rd]&&\b\\
&\b\ar[ru]\ar@{..>}[rd]&&\tb{E_2}\ar[ru]\ar@{..>}[rd]&&\b\ar[ru]\ar@{..>}[rd]&&\b\ar[ru]\ar@{..>}[rd]&&\b\ar[ru]\ar@{..>}[rd]&\\
\c\ar@{..>}[ru]&&\bc{(\vec{0})}\ar@{..>}[ru]&&\c\ar@{..>}[ru]&&\c\ar@{..>}[ru]&&\c\ar@{..>}[ru]&&\bc{(\vx_2)}
}.$$}
\begin{Prop}
	\label{proj_cover_2_3_4}
We assume weight type $(2,3,4)$ and refer to the notations from the picture above. Then each indecomposable vector bundle of rank at least two lies in the $\tau$-orbit of exactly one of the vector bundles
$E_i$, $i=2,3,4$, $F_j=E_j(\vx_1-2\vx_3)$ for $j=2,3$, and $\tauX G_2$, where the subindex indicates the rank. The
projective covers of these vector bundles are given by the table below.\medskip

\begin{tabular}{l|l}
{vector bundle} & {projective cover}\\
\hline
$E_2$ & $\vec{0}$, $ -\vw-\vx_1$, $ -\vw-\vx_2$, $-\vw-\vx_3$\\
$F_2$ & $\vw$, $ \vx_2-\vx_1$, $ -\vx_2$, $ \vx_2-\vx_3$\\
$\tauX G_2$ & $\vx_2-2\vx_3$, $ -\vx_2$, $ \vx_1-2\vx_2$, $ \vw-\vx_3$\\
\hline
$E_3$ & $\vw$, $ \vx_3-\vx_1$, $ -\vx_2$, $\vx_2-\vx_1$, $-\vx_3$, $\vx_2-\vx_3$\\
$F_3$ & $2\vx_3-\vx_2$, $ -\vx_3$, $ \vw-\vx_3$, $ \vx_2-2\vx_3$, $ \vx_1-3\vx_3$, $\vx_1-2\vx_2  $\\
\hline
$E_4$ & $\vx_2-2\vx_3$, $ -\vx_2$, $ \vx_2-\vx_1$, $ \vw-\vx_3$, $\vw, \vx_3-\vx_1$, $ \vx_3-\vx_2$, $ -\vx_3$\\
\end{tabular}
\end{Prop}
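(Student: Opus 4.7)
The proof follows the same strategy as Proposition \ref{proj_cover_2_3_3}. The Auslander-Reiten quiver of $\vect\XX$ has shape $\ZZ\widetilde\EE_7$, so it carries exactly eight $\tau$-orbits whose rank values match the coefficients of the null root of $\widetilde\EE_7=[2,4,4]$: two orbits of line bundles, three of rank $2$, two of rank $3$, and one of rank $4$. Since rank, projective cover and injective hull are invariant along each $\tau$-orbit (up to degree shift by $\vom$), it suffices to establish the stated projective covers for the six representatives $E_2, F_2, \tau G_2, E_3, F_3, E_4$.

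For the rank-two orbits, the classification of indecomposable rank-two bundles recalled earlier asserts that every such bundle is an extension bundle $\extb{L}{\vx}$ with $\vnull\le\vx\le\vdom$. Reading off the relative positions in the AR quiver and using \eqref{eqn:ext_bundle:shift}, we identify $E_2$ as the Auslander bundle $\extbo{\vnull}$ attached to $\Oo$, and likewise present $F_2=E_2(\vx_1-2\vx_3)$ and $\tau G_2$ as extension bundles of the form $\extb{L}{\vx}$ for explicit $L$ and $\vx$ dictated by their positions relative to $(\vnull)$, $(\vx_2)$ and $(\vx_3)$. Theorem \ref{KLM_proj_cov} then supplies the four-summand projective covers displayed in the table.

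For $E_3$, $F_3$ and $E_4$ we apply Lemma \ref{proj_cover_from_sequence} inductively. For each such bundle we produce a distinguished exact sequence $0\to X\to E\to Y\to 0$ with $X$ and $Y$ indecomposable of strictly smaller rank: $E_3$ and $F_3$ are realised as extensions with $X$ a line bundle and $Y$ a rank-two extension bundle whose projective cover is already known, while $E_4$ is realised as an extension of two rank-two bundles (alternatively of a line bundle and a rank-three bundle). Concrete candidates for $X$ and $Y$ are read off from the irreducible maps in the AR quiver, and the hypothesis $\ExtX(\pc{Y},X)=0$ is verified via Serre duality $\D\ExtX(\pc{Y},X)\cong\HomX(X,\pc{Y}(\vom))$ combined with slope comparison and the stability of indecomposable vector bundles in the domestic case. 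With these ingredients in place, Lemma \ref{proj_cover_from_sequence} gives $\pc{E}=\pc{X}\oplus\pc{Y}$, and summing the lists of line bundle summands reproduces the entries displayed in the table for $E_3$, $F_3$ and $E_4$.

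The principal obstacle lies in the rank-three case. In Proposition \ref{proj_cover_2_3_3}, every distinguished exact sequence with middle term of rank $3$ splits, which forced a direct filtration argument there; here the presence of the rank-four bundle $E_4$ provides enough room for non-split sequences exhibiting $E_3$ as an extension, but each candidate must still be individually checked for non-splitting and for the Ext-vanishing hypothesis of the horse-shoe lemma. Should no suitable distinguished exact sequence be available for some $E_3$ orbit, we fall back on the line bundle filtration method used for the rank-three bundle in the $(2,3,3)$ setting, assembling $\pc{E_3}$ directly from cover morphisms of its individual line bundle quotients and establishing minimality either by Hom-orthogonality of the resulting summands or, more economically, by the rank identity $\rk\pc{E_3}=2\rk E_3=6$ provided by Theorem \ref{Thm:2ab}.
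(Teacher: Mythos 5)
Your overall skeleton coincides with the paper's: reduce to $E_2$, $E_3$, $E_4$, $\tauX G_2$ via $F_j=E_j(\vx_1-2\vx_3)$ and $\pc{E(\vx)}=\pc{E}(\vx)$; identify the rank-two representatives as extension bundles and read off their covers from Theorem \ref{KLM_proj_cov}; and obtain $\pc{E_4}=\pc{\tauX G_2}\oplus\pc{G_2}$ from the almost split sequence $0\to\tauX G_2\to E_4\to G_2\to 0$ via Lemma \ref{proj_cover_from_sequence}, checking $\ExtX(\pc{G_2},\tauX G_2)=0$ by stability and Serre duality. All of that is exactly what the paper does.

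The genuine problem is your treatment of $E_3$ and $F_3$. You assert that, unlike in the $(2,3,3)$ case, ``the presence of the rank-four bundle $E_4$ provides enough room for non-split sequences exhibiting $E_3$ as an extension'' to which the horse-shoe lemma applies. This is false, and the existence of $E_4$ is irrelevant. In the Frobenius structure on $\vect\XX$ the line bundles are precisely the indecomposable projective-injective objects, and a rank-three bundle can only be written as an extension of smaller-rank bundles in the form $1+2$ or $2+1$; in either case one end term is a line bundle, so a \emph{distinguished} exact sequence of that shape splits (injective subobject, respectively projective quotient), contradicting the indecomposability of $E_3$. Hence Lemma \ref{proj_cover_from_sequence} is never applicable to an indecomposable bundle of rank three, for any weight type --- this is exactly the obstruction the paper records in the proof of Proposition \ref{proj_cover_2_3_3}, and it persists here. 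Your ``fallback'' (the direct line-bundle-filtration argument, lifting component maps through non-distinguished sequences such as $0\to F\to E_3\to\Oo(\vy)\to 0$ and verifying minimality via $\rk\pc{E_3}=2\rk E_3$ or Hom-orthogonality) is therefore not a contingency but the \emph{only} available route, and it is the one the paper takes (``Concerning $E_3$, the proof is similar to the proof of Proposition \ref{proj_cover_2_3_3}''). The same objection rules out your parenthetical alternative of presenting $E_4$ as an extension of a line bundle by a rank-three bundle; only the decomposition into two rank-two bundles works there.
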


\begin{pf}
Since $F_i=E_i(\vx_1-2\vx_3)$ for $i=2,3$ and $\pc{E(\vx)}=\pc{E}(\vx)$, it suffices to determine the projective covers of $E_2,\ldots,E_4$ and of $\tauX G_2$. For the rank two bundles $E_2$ and $\tauX G_2$ this is an application of Theorem \ref{KLM_proj_cov}. Concerning $E_3$, the proof is similar to the proof of Proposition \ref{proj_cover_2_3_3}. It thus remains to determine the projective cover of $E_4$ by using Lemma \ref{proj_cover_from_sequence}. For this we consider the almost split sequence $0\lra \tauX G_2 \lra E_4 \lra G_2\lra 0$.
If $L$ is a line bundle summand of $\pc{G_2}$, then, by stability, $\mu L<\mu G_2$, and $\ExtX(L,\tauX G_2)=D\HomX(G_2,L)=0$. Similarly, if $L'$ is a line bundle summand of $\pc{\tauX G_2}$, then $\ExtX(G_2,L')=0$. Therefore, the above sequence satisfies the  assumptions of Lemma \ref{proj_cover_from_sequence}, such that $\pc{E_4}=\pc{\tauX G_2}\oplus \pc{G_2}$, as claimed.
\end{pf}

\subsection*{Case $(2,3,5)$}
In this case the Auslander-Reiten quiver of $\vect\XX$  has the form $\ZZ\wtilde{\EE}_8$. We have just one $\tau$-orbit of rank $r$ for $r=1,5,6$, and two $\tau$-orbits of rank $r$ for each $r=2,3,4$.
$$
\xymatrix @!0{
&\b\ar[rd]&\ar@<-3ex>@{--}@[red][ddddddd]\ar@<-5ex>@{--}@[red][drr]&\tb{F_2}\ar[rd]&\ar@<-3ex>@{--}@[red][ddddddd]&\b\ar[rd]&&\b\ar[rd]&&\b\\
\b\ar[ru]\ar[rd]&&\ar@<-5ex>@{--}@[red][drr]\tb{F_4}\ar[ru]\ar[rd]&&\b\ar[ru]\ar[rd]&&\b\ar[ru]\ar[rd]&&\b\ar[ru]\ar[rd]&\\
\b\ar[r]&\b\ar[r]\ar[ru]\ar[rd]&\ar@<-5ex>@{--}@[red][drr]\tb{G_3}\ar[r]&\tb{E_6}\ar[r]\ar[ru]\ar[rd]&\b\ar[r]&\b\ar[r]\ar[ru]\ar[rd]&\b\ar[r]&\ar[r]\b\ar[ru]\ar[rd]&\b\ar[r]&\b\\
\b\ar[ru]\ar[rd]&&\ar@<-5ex>@{--}@[red][drr]\tb{E_5}\ar[ru]\ar[rd]&&\b\ar[ru]\ar[rd]&&\b\ar[ru]\ar[rd]&&\b\ar[ru]\ar[rd]&\\
&\b\ar[ru]\ar[rd]&\ar@<-5ex>@{--}@[red][drr]&\tb{E_4}\ar[ru]\ar[rd]&&\b\ar[ru]\ar[rd]&&\b\ar[ru]\ar[rd]&&\b\\
\b\ar[ru]\ar[rd]&&\tb{E_3}\ar@<-5ex>@{--}@[red][drr]\ar[ru]\ar[rd]&&\b\ar[ru]\ar[rd]&&\b\ar[ru]\ar[rd]&&\b\ar[ru]\ar[rd]&\\
&\b\ar@{..>}[rd]\ar[ru]&&\tb{E_2}\ar@{..>}[rd]\ar[ru]&&\b\ar@{..>}[rd]\ar[ru]&&\b\ar@{..>}[rd]\ar[ru]&&\b\\
\c\ar@{..>}[ru]&&\bc{(\vec{0})}\ar@{..>}[ru]&&\c\ar@{..>}[ru]&&\c\ar@{..>}[ru]&&\c\ar@{..>}[ru]&\\
}$$
The marked region is a fundamental domain with respect to the Auslander-Reiten translation.

\begin{Prop}
	\label{proj_cover_2_3_5}
We assume weight type $(2,3,5)$ and use the notations from the picture above.
Then each indecomposable bundle of rank $\geq 2$ lies in the Auslander-Reiten orbit of exactly one of the vector bundles $E_i$, $F_j$ and $G_l$,  having rank $i$ for $i=2,3,4,5,6$ (resp.\ for $j=2,4$, resp.\ for $l=3$). Moreover, the projective covers are given by the table below:

\begin{tabular}{c|l}
{vector bundle}& {projective cover}\\
\hline
$E_2$ & $\vec{0}$, $\vx_3-2\vx_2$, $\vx_3-\vx_1$, $\vx_2-\vx_1$\\
$F_2$ & $\vx_1-3\vx_3$, $\vw-\vx_2$, $-\vx_3$, $\vx_2-3\vx_3 $\\
\hline
$E_3$ & $\vw$, $\vx_2-\vx_1$, $-\vx_3$, $\vx_3-\vx_1$, $-\vx_2$, $\vx_3-2\vx_2 $\\
$G_3$ & $\vx_1-3\vx_3$, $\vx_2-\vx_1$, $\vw-\vx_3,\vx_2-3\vx_3,-\vx_2,-2\vx_3 $\\
\hline
$E_4$ & $\vx_2-\vx_1$, $\vw-2\vx_3$, $\vx_2-3\vx_3$, $-\vx_2$, $\vw$, $\vx_3-\vx_1$, $\vx_3-\vx_2$, $ -\vx_3$\\
$F_4$ & $\vx_3-\vx_2$, $-2\vx_3$, $-\vx_3$, $\vx_3-\vx_1$, $\vx_1-3\vx_3$, $\vw-\vx_2$, $-\vx_3$, $ \vx_2-3\vx_3$\\
\hline
$E_5$ & $\vx_3-\vx_2$, $-\vx_3$, $\vx_2-3\vx_3$, $\vx_3-\vx_1$, $\vw-\vx_2$, $\vw-2\vx_3$, $\vx_2-2\vx_3$,\\
 & $ -\vx_2$, $\vx_2-\vx_1$, $\vw-\vx_3$\\
\hline
$E_6$ & $\vx_1-3\vx_3$, $\vx_2-\vx_1$, $\vw-\vx_3$, $\vx_2-3\vx_3$, $-\vx_2$, $-2\vx_3$, $\vx_2-2\vx_3$,\\
 &$ \vx_3-\vx_2$, $-\vx_3$, $\vw-\vx_3$, $\vx_3-\vx_1$, $\vw-\vx_2$
\end{tabular}
\end{Prop}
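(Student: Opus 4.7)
The plan is to establish the proposition rank by rank, following the same strategy used in the proofs of Propositions \ref{proj_cover_2_2_n}, \ref{proj_cover_2_3_3} and \ref{proj_cover_2_3_4}. For the rank-two bundles $E_2$ and $F_2$, I would identify each as an extension bundle $\extbo{\vx}$ with $\vnull\leq\vx\leq\vdom$ by reading off its position in the $\ZZ\wtilde\EE_8$-quiver relative to the structure-sheaf orbit, and then invoke Theorem \ref{KLM_proj_cov} to read off the four line-bundle summands of the cover. This gives the first two lines of the table directly.

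For the rank-three bundles $E_3$ and $G_3$ the horse-shoe Lemma \ref{proj_cover_from_sequence} is not directly available: any candidate distinguished exact sequence with middle term of rank three in type $\wtilde\EE_8$ either splits or fails the required Ext-vanishing. I therefore mimic the hands-on argument used for $E_3$ in Proposition \ref{proj_cover_2_3_3}. One builds a two-step filtration $0\to\Oo(\vw)\to F\to\Oo\to 0$ sitting inside $0\to F\to E_3\to \Oo(\vy)\to 0$ (with $F$ a suitable rank-two extension bundle), and lifts the canonical maps $x_i:\Oo(-\vx_i)\to\Oo$ and their shifts along the two successive surjections, obtaining six line-bundle morphisms into $E_3$. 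That every map from a line bundle into $E_3$ factors through this family is checked degree by degree as in Proposition \ref{proj_cover_2_3_3}, while minimality is forced by the rank count $\rk\pc{E_3}=2\rk E_3=6$ of Theorem \ref{Thm:2ab}. The case $G_3$ is handled by the identical argument at the second rank-three $\tau$-orbit.

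For the higher-rank bundles $E_4$, $F_4$, $E_5$ and $E_6$ I would apply Lemma \ref{proj_cover_from_sequence} iteratively to an almost-split, or otherwise distinguished exact, sequence $0\to X\to E\to Y\to 0$ whose middle term is the bundle in question and whose end terms have strictly smaller rank and already-known projective cover. Inspection of the fundamental domain drawn above identifies the natural sequences: for $E_4$ and $F_4$ a sequence with outer terms of ranks three and one, for $E_5$ one with outer terms summing to five, and for $E_6$ a distinguished exact sequence around the branch point of $\wtilde\EE_8$ with outer terms of rank three each. In each case the key hypothesis $\ExtX(\pc{Y},X)=0$ is verified via stability: every line-bundle summand $L$ of $\pc{Y}$ satisfies $\mu(L)<\mu(Y)$, and Serre duality $\ExtX(L,X)\cong\D\HomX(X,\tauX L)$ turns the required vanishing into a slope inequality that is immediate from the almost-split data. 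Lemma \ref{proj_cover_from_sequence} then yields $\pc{E}=\pc{X}\oplus\pc{Y}$ with the line-bundle summands as tabulated, minimality being confirmed by $\rk\pc{E}=2\rk E$.

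The principal obstacle is the rank-three step, which demands the ad-hoc two-step-filtration construction in place of the clean inductive argument of Lemma \ref{proj_cover_from_sequence}. A secondary subtlety is the precise choice of distinguished exact sequence for $E_6$: several candidates are visible in the fundamental domain, and one must select one whose outer terms have compatible slopes so that stability delivers the Ext-vanishing; once this choice is made, the remaining verification reduces to bookkeeping of line-bundle degrees and is routine.
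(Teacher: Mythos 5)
Your overall strategy --- rank two via Theorem \ref{KLM_proj_cov}, rank three via the ad-hoc filtration argument of Proposition \ref{proj_cover_2_3_3}, higher ranks via the horse-shoe Lemma \ref{proj_cover_from_sequence} applied to distinguished exact sequences with outer terms of smaller rank --- is exactly the paper's strategy, and your choices for $E_5$ (outer ranks $3$ and $2$, namely $0\lra\tauX G_3\lra E_5\lra\tauX^-F_2\lra 0$) and for $E_6$ (outer ranks $3$ and $3$, namely $0\lra G_3\lra E_6\lra\tauX^-G_3\lra 0$) coincide with the sequences the paper uses.

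However, your proposed decomposition of $E_4$ and $F_4$ into outer terms of ranks three and one cannot work, and this is a genuine gap. In the Frobenius structure on $\vect\XX$ the line bundles are precisely the indecomposable projective-injective objects, so any distinguished exact sequence having a line bundle as an end term splits; since $E_4$ and $F_4$ are indecomposable of rank four, there is no non-split distinguished exact sequence $0\to X\to E_4\to L\to 0$ (or $0\to L\to E_4\to Y\to 0$) with $L$ a line bundle and the complementary term of rank three, so Lemma \ref{proj_cover_from_sequence} has nothing to apply to. Equivalently: the horse-shoe output would have rank $2\cdot 3+1=7$, contradicting $\rk\,\pc{E_4}=2\,\rk E_4=8$ from Theorem \ref{Thm:2ab}, and the hypothesis $\ExtX(\pc{L},X)=\ExtX(L,X)=0$ would itself force the sequence to split. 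The paper instead halves the rank: it uses $0\lra \tauX^2 F_2\lra E_4\lra \tauX^{-2}F_2\lra 0$ and $0\lra \tauX F_2\lra F_4\lra F_2\lra 0$, with both outer terms extension bundles of rank two (determined by the pairs $(\Oo(\vx_3-\vx_2),\vx_3)$ and $(\Oo,\vx_3)$ in the $E_4$ case), whose covers are read off from Theorem \ref{KLM_proj_cov}; the required vanishing $\ExtX(\pc{\tauX^{-2}F_2},\tauX^2F_2)=\D\HomX(\tauX F_2,\pc{F_2})=0$ is then checked by Serre duality and slope/Hom-orthogonality considerations, just as you describe for the other cases. With the outer terms corrected to this rank-$(2,2)$ splitting, the rest of your argument goes through as in the paper.
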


\begin{pf}
In the case of the vector bundles of rank $2$ and $3$ the proof is similar as in Proposition \ref{proj_cover_2_3_3}. For the remaining cases we will use Lemma \ref{proj_cover_from_sequence}.

In the case $E_4$ we consider the exact sequence
$0\lra \tauX^2 F_2\lra E_4\lra \tauX^{-2}F_2\lra 0$. It is easy to see that this sequence satisfies the  assumptions of Lemma \ref{proj_cover_from_sequence}, ie. $\ExtX(\pc{\tauX^{-2}F_2}, \tauX^2 F_2)=D\HomX(\tauX F_2, \pc{F_2})=0$
and $\ExtX(\tauX^{-2}F_2,\ih{\tauX^2 F_2})=$\newline $=D\HomX(\tauX\ih{F_2},F_2)=0$. Hence
$\pc{E_4}=\pc{\tauX^2 F_2}\oplus \pc{\tauX^{-2}F_2}$, so the results follow from Theorem \ref{KLM_proj_cov}, applied to the extension bundle $\tauX^2F_2$ (resp. $\tauX^{-2}F_2$) which are determined by the pairs $(\Oo(\vx_3-\vx_2), \vx_3)$ (resp. $(\Oo,\vx_3)$).

For the vector bundles $F_4$, $E_5$ and $E_6$, we use the exact sequences $0\lra \tauX F_2\lra F_4\lra F_2\lra 0$, $0\lra\tauX G_3\lra E_5\lra \tauX^-F_2\lra 0$ and $0\lra G_3\lra E_6\lra \tauX^-G_3\lra 0$, respectively. It is straightforward to check that these satisfy the conditions of Lemma \ref{proj_cover_from_sequence}. The claim follows.
\end{pf}

\begin{Rem}
Later, when calculating  matrix factorizations for the vector bundle $E_6$, we will use two different exact sequences representing $E_6$, namely $0\lra G_3\lra E_6\lra \tauX^-G_3\lra 0$ and $0\lra \tauX F_4\lra E_6\lra \tauX^-F_2\lra 0$. While both sequences yield the same projective cover, we will obtain different shapes for the corresponding matrix factorizations, because the two procedures yield matrix factorizations with a different number of zero entries.
\end{Rem}

We conclude this section with the observation that indecomposable vector bundles are uniquely determined by their projective covers, provided $\XX$ is domestic. This result turns out to be central for determining a matrix factorization for indecomposable vector bundles.

\begin{Prop} \label{prop:determined_by_cover}
We assume a domestic weight triple $(2,a,b)$. Let $E$ and $F$ be two indecomposable vector bundles. Then $E$ and $F$ are isomorphic if and only if their projective covers $\pc{E}$ and $\pc{F}$ are isomorphic.
\end{Prop}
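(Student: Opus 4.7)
The plan is as follows. The forward direction is immediate since projective covers are unique up to isomorphism. For the converse, suppose $\pc{E}\cong\pc{F}$.

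First I would recover the rank. By Theorem \ref{Thm:2ab}, $\rk \pc{E}=2\rk E$ for any indecomposable $E$ of rank $\geq 2$, while $\pc{E}=E$ if $E$ is a line bundle. Hence $\rk E$ is determined by $\pc{E}$ (namely $\rk \pc{E}$ if this equals $1$, and $\rk \pc{E}/2$ otherwise), so $\rk E=\rk F$. If this common rank equals $1$, then $E\cong\pc{E}\cong\pc{F}\cong F$ and we are done; henceforth assume a common rank $r\geq 2$.

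Next I would invoke the orbit classification in Propositions \ref{proj_cover_2_2_n}--\ref{proj_cover_2_3_5}: each indecomposable vector bundle of rank $\geq 2$ can be written uniquely as $\tauX^{n}E_\alpha$, where $E_\alpha$ ranges over the explicit finite list of $\tauX$-orbit representatives attached to the weight type and $n\in\ZZ$. Since $\tauX$ acts as the degree shift by $\vom$, this yields $\pc{E}=\pc{E_\alpha}(n\vom)$ and, similarly, $\pc{F}=\pc{E_\beta}(m\vom)$. The problem thus reduces to showing that an isomorphism $\pc{E_\alpha}(n\vom)\cong\pc{E_\beta}(m\vom)$ forces $(\alpha,n)=(\beta,m)$.

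The hard part is this final injectivity step. Summing the line bundle summands and using the determinant formula \eqref{detformula} gives an equality in $\LL$:
\[
\det\pc{E_\alpha}+2rn\vom \;=\; \det\pc{E_\beta}+2rm\vom.
\]
For $\alpha=\beta$, applying the degree map $\delta\colon\LL\to\ZZ$ (recall $\delta(\vom)=-1$, so $\vom$ has infinite order in $\LL$) immediately yields $n=m$. For $\alpha\neq\beta$, $\delta$ pins down the candidate integer $m-n$ uniquely, after which one has to test the resulting equation in $\LL$ itself; here the degree alone can be too coarse, and the torsion part of $\LL$ must be used (e.g.\ the order-two element $\vx_1-2\vx_3$ for weight type $(2,3,4)$, cf.\ Remark \ref{Remark_shift_by_x_1}). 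By inspection of the tables of Propositions \ref{proj_cover_2_2_n}--\ref{proj_cover_2_3_5}, the multisets of line bundle summands of distinct orbit representatives are pairwise inequivalent modulo the subgroup $\ZZ\vom\subseteq\LL$; this is a finite and entirely routine check in each of the four domestic weight types, completing the proof.
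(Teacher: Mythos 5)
Your proposal is correct and follows essentially the same route as the paper: recover rank and a degree/slope invariant from $\pc{E}$, reduce to the $\tau$-orbit representatives of Propositions \ref{proj_cover_2_2_n}--\ref{proj_cover_2_3_5}, and distinguish them by comparing their explicitly listed projective covers, using the torsion of $\LL$ where the degree alone is too coarse. The only cosmetic difference is that the paper compares the line bundle summands of maximal slope while you compare the full multisets modulo $\ZZ\vom$; the case-by-case verification you defer as ``routine'' is precisely what the paper's proof carries out.
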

\begin{pf}
We may assume that $\pc{E}=\pc{F}$ where $E$ and $F$ have rank at least two.

The first part of the proof  holds for arbitrary weight triples $(2,a,b)$: From the (distinguished) exact sequence $0\to E(-\vx_1)\to \pc{E} \to E \to 0$
using \ref{detformula} we obtain  that $\det(\pc{E})=2\det(E)+\vc$ and, moreover, $\rk(\pc{E})=2\rk(E)$. Therefore $\pc{E}$ determines determinant, degree, rank and slope of $E$. In particular, $\pc{E}=\pc{F}$ implies that $E$ and $F$ have the same rank and the same slope.

Next, we establish the claim separately for the domestic weight triples $(2,2,n)$, $(2,3,3)$, $(2,3,4)$ and $(2,3,5)$.

\emph{Case $(2,2,n)$:} We refer to the notations of Proposition \ref{proj_cover_2_2_n}. By an appropriate $\tau$-shift, we may assume that the bundles $E_0$, $E_2$, $E_4$, $\ldots$ (resp. the bundles $E_1$, $E_3$, $E_5$, $\ldots$) have the same slope. In each of the two families one checks that they have distinct systems of line bundle summands in the projective cover having maximal slope.

\emph{Case $(2,3,3)$:} With the notations of Proposition \ref{proj_cover_2_3_3}, the rank-two bundles $E_2$, $F_2$ and $G_2$ have the same slope, but different line bundle summands of their projective cover with maximal slope, namely $\Oo$, $\Oo(\vx_3-\vx_2)$ and $\Oo(2(\vx_3-\vx_2))$, respectively. Since there is a unique $\tau$-orbit of indecomposable rank three bundles, each of these bundles is determined by its slope.

\emph{Case $(2,3,4)$:} We refer to the notations from Proposition \ref{proj_cover_2_3_4}.  The rank-two bundles $E_2$ and $E_2(\vx_3-2\vx_1)$ have the same half-integral slope and belong to different $\tau$-orbits. They have different line bundle summands of maximal slope in their respective projective covers, namely $\Oo$ and $\Oo(\vx_3-2\vx_1)$. The members from the third $\tau$-orbit of rank-two bundles, in particular $T$, are distinguished from members from the other two $\tau$-orbits by their slope which is integral. In a similar way, the bundles $E_3$ and $E_3(\vx_3-2\vx_1)$ have the same slope, they represent the $\tau$-orbits of rank-three bundles, and have different line bundle summands of maximal slope in their projective covers, namely $\Oo(\vom)$ and $\Oo(\vom +\vx_3-2\vx_1)$. Finally, their is just one $\tau$-orbit of indecomposable rank-four bundles.

\emph{Case $(2,3,5)$:} We refer to the notations from Proposition \ref{proj_cover_2_3_5}. Here, the claim reduces to show that $E_2$ , $F_2$ ($E_3$ , $F_3$ and $E_4$, $F_4$) can be distinguished in terms of their projective covers. To distinguish $E_2$ and $F_2$ we observe that $\Oo$ and $\Oo$, respectively $\Oo(3\vom)$ are line bundle summands of their projective covers. Concerning $E_3$ and $F_3$ the line bundle summands $\Oo$ and $\Oo(4\vom)$ have maximal slopes in their respective projective covers. Finally, the Auslander-Reiten orbits of $E_4$ and $F_4$ are distinguished by their integral (resp.\ half-integral) slopes.
\end{pf}

Assuming an arbitrary weight triple, a corresponding result holds true for indecomposable bundles of rank two. For the proof we refer to \cite{Lenzing:Ruan:2013}.

\begin{Prop} \label{prop:determined_by_cover:abc}
We assume that $\XX$ has triple weight type $(a,b,c)$. Then each indecomposable vector bundle $E$ of rank two is uniquely determined by its projective cover $\pc{E}$.\hfill\qed
\end{Prop}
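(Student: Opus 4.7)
\emph{Proof proposal.}
The plan is to reduce the question to a combinatorial matching problem in the Picard group $\LL$, using the explicit formula for the projective cover.

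By the classification of rank-two indecomposables, every such bundle $E$ is an extension bundle $\extb{L}{\vx}$ for some line bundle $L$ and some $0\leq \vx\leq \vdom$. It therefore suffices to prove: if $\extb{L}{\vx}$ and $\extb{L'}{\vx'}$ have isomorphic projective covers, then $\extb{L}{\vx}\cong \extb{L'}{\vx'}$. By Theorem~\ref{KLM_proj_cov}, $\pc{\extb{L}{\vx}}$ is the multiset of four mutually Hom-orthogonal line bundles
\[
\bigl\{\,L(\vw),\; L(\vx-(1+l_1)\vx_1),\; L(\vx-(1+l_2)\vx_2),\; L(\vx-(1+l_3)\vx_3)\,\bigr\},
\]
where $\vx=\sum_i l_i\vx_i$ with $0\leq l_i\leq p_i-2$, and the analogous description holds for $\pc{\extb{L'}{\vx'}}$. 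The task is then to recover the pair $(L,\vx)$ from this multiset.

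My approach is to first isolate the distinguished summand $L(\vw)$ among the four. Computing pairwise differences in $\LL$, one finds that $L(\vx-(1+l_i)\vx_i)-L(\vw)=\sum_{j\neq i}(1+l_j)\vx_j-\vc$, whereas a difference between two ``lower'' summands has the form $(1+l_j)\vx_j-(1+l_i)\vx_i$, with no contribution from $\vc$. Exploiting the range $1\leq 1+l_i\leq p_i-1$ together with the group structure of $\LL$, I would separate these two families of elements and thereby single out $L(\vw)$; this identifies $L$, and the three remaining summands then determine each $(1+l_i)\vx_i$, and hence $\vx$, by matching them to the three torsion classes attached to $\vx_1,\vx_2,\vx_3$ inside $\LL$.

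The main obstacle is precisely this matching step, since in highly symmetric weight types numerical coincidences could in principle allow the multiset to be produced by a different pair $(L',\vx')\neq (L,\vx)$. Any such ambiguity, however, must correspond to an intrinsic symmetry of the extension-bundle parameterization (reflected in the Serre-dual description of $\Ext^1(L(\vx),L(\vw))$), under which the underlying bundle $\extb{L}{\vx}$ itself is invariant. Verifying case by case that the admissible weight triples $(a,b,c)$ with $a,b,c\geq 2$ admit no further ambiguities then completes the argument.
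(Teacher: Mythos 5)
First, a point of reference: the paper itself contains no proof of this proposition --- it is quoted from \cite{Lenzing:Ruan:2013} (in preparation) --- so your attempt has to be judged on its own merits rather than against an argument in the text.

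Your reduction to extension bundles and the appeal to Theorem \ref{KLM_proj_cov} are fine, but the core of the argument has a genuine gap in two places. First, the proposed separation of the two families of differences fails: elements of $\LL$ must be compared via their normal form $\sum_i m_i\vx_i+m\vc$ with $0\le m_i\le p_i-1$, and in normal form \emph{both} families carry the coefficient $m=-1$. Indeed $\bigl(\vx-(1+l_i)\vx_i\bigr)-\vw=\sum_{j\ne i}(1+l_j)\vx_j-\vc$, while $(1+l_j)\vx_j-(1+l_i)\vx_i=(1+l_j)\vx_j+(p_i-1-l_i)\vx_i-\vc$; each is a sum of two $\vx$-terms with coefficients in the admissible range minus one copy of $\vc$, so the presence or absence of a ``$\vc$-contribution'' does not distinguish them. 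In fact $L(\vw)$ cannot be singled out from the multiset in general: already for weight type $(2,2,n)$ and $E=\extb{L}{l\vx_3}$, the multiset $\{L(\vw),\,L(l\vx_3-\vx_1),\,L(l\vx_3-\vx_2),\,L(-\vx_3)\}$ is mapped to itself by $L\mapsto L(\vx_1-\vx_2)$, with the distinguished summand moving into the slot of $L(-\vx_3)$ (use $2\vx_1=2\vx_2=\vc$ to check $\vw+\vx_1-\vx_2=-\vx_3$ and $\vx_1-\vx_2-\vx_3=\vw$). Second, your fallback --- that any ambiguity in the parameterization ``must correspond to an intrinsic symmetry under which $\extb{L}{\vx}$ itself is invariant'' --- is exactly the content of the proposition and is only asserted, not proved; combined with a ``case by case'' verification over the infinitely many admissible triples $(a,b,c)$, this leaves the essential step unestablished. (The example above is consistent with the proposition, since there one does have $\extb{L}{l\vx_3}\cong\extb{L(\vx_1-\vx_2)}{l\vx_3}$, but your argument provides no mechanism for proving such isomorphisms.)
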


\begin{Rem}
Assuming a weighted projective line $\XX=\XX(a,b,c)$ of Euler characteristic $\chi_\XX\leq0$, that is, assuming $\XX$ of tubular or wild type, it is no longer true that each indecomposable vector bundle $E$ is uniquely determined by its projective cover $\pc{E}$. Let's assume that the base field $k$ is uncountable and, for simplicity, further that $\XX$ has weight type $(2,a,b)$. By perpendicular calculus, see \cite{Geigle:Lenzing:1991}, there exists a weighted projective line $\YY$ of tubular type and a full embedding $\coh\YY\hookrightarrow \coh\XX$ that preserves the rank. From the tubular families in $\coh\YY$ we then deduce the existence of a one-parameter family $(E_\alpha)$ of indecomposable vector bundles over $\XX$, all having the same positive rank $r$. This, in turn, implies that each projective hull $\pc{E_\alpha}$ has fixed rank $2r$. Since the grading group  $\LL$ is countable, this leaves only countably many possibilities for the isomorphism classes of $\pc{E_\alpha}$, forcing many non-isomorphic $E_\alpha$'s to have the same projective cover.

This leads to a modified question, where the authors do not know the answer. \emph{Assume that $E$ and $F$ are exceptional vector bundles with isomorphic projective covers $\pc{E}$ and $\pc{F}$. Does this imply that $E$ and $F$ are isomorphic?} In support for a positive answer, we mention that, for $\XX$ domestic,  each indecomposable vector bundle is exceptional. Also, because of triple weight type, all extension bundles are exceptional. Moreover since, by a result of Hübner \cite{Huebner:1996}, see \cite{Meltzer:2004} for a proof, exceptional vector bundles are determined by their classes in the Grothendieck group $\Knull\XX$, there exist only countably many isoclasses of exceptional vector bundles, thus preventing the contradiction of the above argument.
\end{Rem}

\section{Factorization frame attached to a vector bundle}

We assume a domestic weight triple $(2,a,b)=(p_1,p_2,p_3)$. Then a matrix factorization for an indecomposable vector $E$ bundle of rank $r$ can be obtained from its minimal projective resolution by first determining its \emph{factorization frame} consisting of a pair of $2r\times 2r$-matrices, see Definition \ref{def_matrix_frame}, and then adjusting the entries of the factorization frame by suitable scalars.

We now establish the key result of this section. For this, it is convenient to identify the Frobenius categories $\CMgr\LL{S}$ and $\vect\XX$ by means of sheafification $\CMgr\LL{S}\stackrel{\tilde{}}{\lra}\vect\XX$, $M\mapsto \tilde{M}$. In the same context, by a matrix factorization $(u,v)$, attached to a vector bundle $E$ without line bundle summands, we mean a matrix factorization of $f=x_1^{p_1}+x_2^{p_2}+x_3^{p_3}$ over $T=k[x_1,x_2,x_3]$ attached to the Cohen-Macaulay module $M$ corresponding to $E$. We are going to compare the rows of the commutative diagram

\begin{equation} \label{eqn:matrixfact:projresI}
\xymatrix{
\cdots\ar[r]& P_0(-\vc)\ar[r]^{v}\ar[d]_\nu & P_1\ar[d]_\nu \ar[r]^{u} & P_0\ar[d]_\nu \ar[r]^\pi& M\ar@{=}[d] \ar[r]& 0 \\
\cdots\ar[r]& \bar{P}_0(-\vc)\ar[r]^{\bar{v}} & \bar{P}_1 \ar[r]^{\bar{u}} & \bar{P}_0 \ar[r]^{\bar{\pi}}& M \ar[r]& 0 \\}
\end{equation}
where the upper row is a $T$-matrix factorization of $f$ for $M$, $\pi$ is a $T$-projective cover of $M$, and where the lower row is a minimal $S$-projective resolution of $M$. The vertical maps, and the bar notation stand for the reduction modulo $(f)$.

In the above setting, assume we fix decompositions of $P_0$ and $P_1$ into indecomposable $T$-projectives and consider the decompositions for the $T$-projectives $P_0(-n\vc)$, induced by degree shift, and the corresponding decompositions of the $S$-projectives $\bar{P}_0(-n\vc)$ and $\bar{P}_1(-n\vc)$, induced by reduction modulo $(f)$. We then say that we have chosen \emph{compatible decompositions} for \eqref{eqn:matrixfact:projresI}. To achieve such compatible decompositions, we may alternatively start with decompositions of $\bar{P}_0$ and $\bar{P_1}$, and then extend them to the remaining members of \eqref{eqn:matrixfact:projresI} by degree shift and by taking $T$-projective covers.

We say that an element $x=x_1^{l_1}x_2^{l_2}x_3^{l_3}$, viewed as a member of $T$ or $S$, is a \emph{monomial with small exponents} if $0\leq l_i\leq p_i-1$ holds for $i=1,2,3$ and, moreover, $\sum_{i=1}^3 l_i>0$. In particular, $x$ belongs to the graded maximal ideal of $T$ (respectively $S$).

\begin{Thm} \label{thm:hom_dimensionsI}
We assume the above setting \eqref{eqn:matrixfact:projresI} for a weighted projective line $\XX$ of domestic type, where $M$ is an indecomposable $\LL$-graded Cohen-Macaulay module of rank at least two, attached to the (indecomposable) vector bundle  $E$ on $\XX$.

Let $\vy_0, \vy_1$ be members of $\LL$ such that $T(\vy_i)$ is a $T$-direct summand of $P_i$ ($i=0,1$), and, accordingly, $S(\vy_i)$ is an $S$-direct summand of $\bar{P}_i$. Then reduction modulo $(f)$ induces an isomorphism
\begin{equation} \label{eqn:iso:homspacesI}
T_{\vy_0-\vy_1}=\Hom_T(T(\vy_1),T(\vy_0))\stackrel{\cong}\lra \Hom_S(S(\vy_1),S(\vy_0))=S_{\vy_0-\vy_1}.
\end{equation}
Moreover, the Hom-spaces from \ref{eqn:iso:homspacesI} are either both zero, and then $\vy_1\not\leq \vy_0$ or else $0 < \delta(\vy_0-\vy_1)< \delta(\vc)$, and then $T_{\vy_0-\vy_1}=kx$ (and also $S_{\vy_0-\vy_1}=kx$) for a monomial $x$ with small exponents.
\end{Thm}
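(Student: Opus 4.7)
I would prove the statement in three stages: reduce the isomorphism claim to a vanishing of $T$ in a lower degree, reduce that vanishing to a numerical spread bound on the summand positions of $\pc{E}$, and finally verify the spread bound by inspection of the explicit projective covers listed in Propositions \ref{proj_cover_2_2_n}--\ref{proj_cover_2_3_5}.

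First, for every $\vx\in\LL$, multiplication by $f$ gives an exact sequence $0\to T_{\vx-\vc}\xrightarrow{\cdot f}T_\vx\to S_\vx\to 0$, so the canonical surjection $T_{\vy_0-\vy_1}\to S_{\vy_0-\vy_1}$ is an isomorphism precisely when $T_{\vy_0-\vy_1-\vc}=0$. Since $\delta$ is non-negative on the positive cone $\sum\NN\vx_i$, vanishing of $T_{\vz}$ for $\vz$ with $\delta(\vz)<0$ is automatic. By Theorem \ref{Thm:2ab}, $P_1=P_0(-\vx_1)$, so every summand position $\vy_1$ of $P_1$ has the form $\vy_0'-\vx_1$ for a summand position $\vy_0'$ of $P_0=\pc{E}$, and the identity $\vc=2\vx_1$ turns the target inequalities into
\begin{equation*}
0 \;<\; \delta(\vy_0)-\delta(\vy_0')+\delta(\vx_1) \;<\; 2\delta(\vx_1)=\bar p.
\end{equation*}
This amounts to the strict spread bound $\delta(\vy_0)-\delta(\vy_0')<\delta(\vx_1)=\bar p/2$ for any two summand positions $\vy_0,\vy_0'$ of $\pc{E}$.

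Second, the spread bound is verified orbit by orbit via Propositions \ref{proj_cover_2_2_n}--\ref{proj_cover_2_3_5}. Because $\pc{E(\vy)}=\pc{E}(\vy)$, the spread is invariant under degree shift, so only one representative from each Auslander-Reiten orbit of rank $\geq 2$ needs to be treated. A sample computation: for type $(2,3,4)$ one has $\bar p/2=6$, and the eight summand positions of $\pc{E_4}$ listed in Proposition \ref{proj_cover_2_3_4} have $\delta$-values $-2,-4,-2,-4,-1,-3,-1,-3$, so the spread is $3<6$. The remaining orbits in all four domestic types are handled identically.

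Third, granted the spread bound, the dimension and small-exponent assertions follow by elementary lattice arithmetic. If $T_{\vy_0-\vy_1}\neq 0$, write $\vy_0-\vy_1=\sum a_i\vx_i$ with $a_i\geq 0$; then $\delta(\vy_0-\vy_1)>0$ forces $\sum a_i>0$, while each term satisfies $a_i\delta(\vx_i)\leq\delta(\vy_0-\vy_1)<\bar p=p_i\delta(\vx_i)$, giving $0\leq a_i\leq p_i-1$. Hence $x=x_1^{a_1}x_2^{a_2}x_3^{a_3}$ is a small-exponent monomial representing $\vy_0-\vy_1$. Uniqueness follows because any second non-negative representation $(b_1,b_2,b_3)\in\NN^3$ of the same $\LL$-element would differ from $(a_1,a_2,a_3)$ by an element of the relation lattice of $\LL$; this lattice is generated by $(p_1,-p_2,0),(0,p_2,-p_3),(p_1,0,-p_3)$, so every non-trivial relation has absolute value at least $p_i$ in some coordinate, contradicting the bounds on $a_i,b_i$. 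Thus $T_{\vy_0-\vy_1}=kx$.

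The hard part is really the spread bound in the second stage. I see no conceptual shortcut, e.g., via stability of $E$ or slope comparison alone, and the verification is genuinely case-by-case, though mechanical once the tables of Section \ref{sec_proj_covers} are in hand. The other two stages are routine formal consequences.
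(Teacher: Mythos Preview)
Your argument is correct, but it takes a different route from the paper, and your closing remark is precisely where the two diverge. You say you ``see no conceptual shortcut, e.g., via stability of $E$ or slope comparison alone,'' and then fall back on a case-by-case verification of the spread bound from the tables in Propositions~\ref{proj_cover_2_2_n}--\ref{proj_cover_2_3_5}. The paper, however, does exactly the thing you ruled out: it uses that in the domestic case every indecomposable bundle is stable. Since $\bar P_1$ is simultaneously the projective cover of $E(-\vx_1)$ and the injective hull of $E(-\vc)$, every line bundle summand $\Oo(\vy_1)$ admits a nonzero map to $E(-\vx_1)$ and receives one from $E(-\vc)$; stability then forces $q-\delta(\vc)<\delta(\vy_1)<q-\delta(\vx_1)$, where $q=\mu E$. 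The analogous argument for $\bar P_0$ gives $q-\delta(\vx_1)<\delta(\vy_0)<q$, and the chain
\[
q-\delta(\vc)<\delta(\vy_1)<q-\delta(\vx_1)<\delta(\vy_0)<q
\]
yields $0<\delta(\vy_0-\vy_1)<\delta(\vc)$ directly, with no appeal to the explicit covers.

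Both arguments are valid. Yours is more elementary but logically downstream of the rather laborious determination of the projective covers in Section~\ref{sec_proj_covers}; the paper's stability argument is independent of those tables and is what makes Theorem~\ref{thm:hom_dimensionsI} a genuine tool for \emph{producing} the matrix factorizations rather than a consequence of already knowing them. Your first and third stages, on the other hand, match the paper's treatment essentially verbatim.
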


\begin{pf}
Since $f$ belongs to the graded maximal ideal of $S$, a finitely generated graded $T$-module $M$ is zero if and only if $M/fM$ is zero. In particular, $\Hom_T(T(\vy_1),T(\vy_0))=0$ if and only if $\Hom_S(S(\vy_1),S(\vy_0))=0$.

Next, we switch to the context of vector bundles, and use the existence of (distinguished) exact sequences
$0\lra E(-\vx_1)\lra \bar{P}_0\lra E \lra 0$ and $ 0\lra E(-\vc)\lra \bar{P}_1\lra E(-\vx_1)\lra 0$, where the $\bar{P_0}$ and $\bar{P_1}$ are projective in the Frobenius category $\vect\XX$. Assume that $\HomX(\Oo(\vy_1),\Oo(\vy_0))\neq 0$. We are going to show that $\HomX(\Oo(\vy_1),\Oo(\vy_2))=kx$, where $x$ is a small monomial: If $E$ has slope $\mu E=q$, then $\mu E(-\vx_1)=q-\delta(\vx_1)$ and $\mu E(-\vc)=q-\delta(\vc)$. Because $\XX$ is domestic, all indecomposable vector bundles are stable by \cite[Proposition 5.5]{Geigle:Lenzing:1987}. Since $\bar{P}_1$ is the injective hull of $E(-\vc)$, we have $\HomX(E(-\vc),\Oo(\vy_1))\neq0$, hence $q-\delta(\vc)<\delta(\vy_1)$. Since $\bar{P}_1$ is the projective cover of $E(-\vx_1)$, we have $\HomX(\Oo(\vy_1),E(-\vx_1))\neq0$, hence $\delta(\vy_1) < q-\delta(\vx_1)$. Similarly, using that $\bar{P}_0$ is the injective hull of $E(-\vx_1)$ and also the projective cover of $E$, we obtain the inequalities $q-\delta(\vx_1)<\delta(\vy_0) < q$. Putting things together, we finally get
$$q-\delta(\vc)<\delta (\vy_1)<q-\delta(\vx_1)< \delta(\vy_0) <q,$$
and, in particular, $\delta(\vy_0-\vy_1)<\delta(\vc)$. Since $\HomX(\Oo(\vy_1),\Oo(\vy_0))=S_{\vy_1-\vy_0}$ we get $0<\delta(\vy_1-\vy_0)<\delta(\vc)$. We put $\vu=\vy_0-\vy_1$. Since $S_{\vy_0-\vy_1}\neq 0$ by assumption, then the inequalities $0<\vec{u}$ and $0<\delta(\vec{u})<\delta(\vc)$ follow.
Note that $\vu=0$ is not possible, because $\delta(\vu)>0$.
 Writing $\vy_0-\vy_1$ in normal form $\vy_0-\vy_1=l_1\vx_1+l_2\vx_2+l_3\vx_3+l\vc$ with $0\leq l_i \leq p_i-1$, we obtain $l\geq 0$ from $\vy_0-\vy_1>0$. Assuming that $l\geq 1$, then yields $\delta(\vy_0-\vy_1)\geq \delta(\vc)$, which is impossible. Thus $l=0$, and $S_{\vy_0-\vy_1}=k\vx_1^{l_1}\vx_2^{l_2}\vx_3^{l_3}$, establishing the last assertion. From this it finally follows that the map from \eqref{eqn:iso:homspaces} is an isomorphism.
\end{pf}

For the Corollary below, we keep the notations and assumptions of Theorem \ref{thm:hom_dimensionsI}.
\begin{Cor} \label{cor:same_matrices}
We assume compatible decompositions for the members of Theorem \eqref{thm:hom_dimensionsI}. Then the $T$-matrix factorization $(u,v)$ of $f$, associated to $E$, and the $S$-minimal projective resolution $(\bar{u},\bar{v})$ are represented by the `same' matrix pair $(U,V)$, whose entries are scalar multiples of monomials with small exponents, interpreted as elements of $T$ (respectively of $S$).\hfill\qed
\end{Cor}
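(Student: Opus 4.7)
The plan is to read off both matrices entry-by-entry, using Theorem \ref{thm:hom_dimensionsI} to identify the individual Hom-spaces with each other. First I would fix the compatible decompositions as $P_0=\bigoplus_{j}T(\vy_0^{(j)})$ and $P_1=\bigoplus_{i}T(\vy_1^{(i)})$, with matching decompositions of $\bar P_0$ and $\bar P_1$ after reduction modulo $(f)$. The map $u$ is then represented by a matrix $U=(u_{ji})$ with $u_{ji}\in \Hom_T(T(\vy_1^{(i)}),T(\vy_0^{(j)}))=T_{\vy_0^{(j)}-\vy_1^{(i)}}$, and $\bar u$ by $\bar U=(\bar u_{ji})$ with $\bar u_{ji}\in S_{\vy_0^{(j)}-\vy_1^{(i)}}$. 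Commutativity of the right-hand square in \eqref{eqn:matrixfact:projresI} gives $\bar u_{ji}=\nu(u_{ji})$.

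The key input is Theorem \ref{thm:hom_dimensionsI}: the reduction $T_{\vy_0^{(j)}-\vy_1^{(i)}}\to S_{\vy_0^{(j)}-\vy_1^{(i)}}$ is an isomorphism, and each of these spaces is either zero or one-dimensional, spanned by the monomial $x_1^{l_1}x_2^{l_2}x_3^{l_3}$ with small exponents, where $l_1,l_2,l_3$ come from the normal form of $\vy_0^{(j)}-\vy_1^{(i)}$. Writing $u_{ji}=\lambda_{ji}\,x_1^{l_1(j,i)}x_2^{l_2(j,i)}x_3^{l_3(j,i)}$ with $\lambda_{ji}\in k$, the isomorphism forces $\bar u_{ji}$ to equal the same scalar times the same monomial regarded in $S$; that is, $U$ and $\bar U$ are the ``same'' array of scalar multiples of the same small monomials.

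For $V:P_0(-\vc)\to P_1$ the entries live in $T_{\vy_1^{(i)}+\vc-\vy_0^{(j)}}$, which is not literally the case covered by Theorem \ref{thm:hom_dimensionsI}. However, the proof of that theorem establishes the chain
$$q-\delta(\vc)<\delta(\vy_1^{(i)})<q-\delta(\vx_1)<\delta(\vy_0^{(j)})<q,$$
which gives $0<\delta(\vy_1^{(i)}+\vc-\vy_0^{(j)})<\delta(\vc)$; the normal form argument from the final paragraph of that proof then shows that the Hom-space, if non-zero, is spanned by a small monomial and that reduction modulo $(f)$ is again an isomorphism. The same entry-by-entry identification then yields a single matrix $V=\bar V$ whose entries are scalar multiples of small monomials. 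I do not expect a genuine obstacle here: the real work was already done in Theorem \ref{thm:hom_dimensionsI}, and the corollary merely packages its conclusions into matrix form, with the small additional observation that the argument for $V$ is parallel to the one explicitly stated for $U$.
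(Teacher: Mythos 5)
Your proof is correct and is essentially the argument the paper intends: the corollary is stated with no proof (just a \qed) as an immediate consequence of Theorem \ref{thm:hom_dimensionsI}, and your entry-by-entry identification via the one-dimensional Hom-spaces is exactly that reading. Your additional observation that the $V$-entries live in $T_{\vy_1+\vc-\vy_0}$, which is not literally covered by the theorem's statement but follows from the same chain of slope inequalities and the same normal-form argument, is a correct and worthwhile filling-in of the detail the paper leaves implicit.
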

The scalar factors from Corollary \ref{cor:same_matrices}, and hence the matrices $(U,V)$, are usually difficult to determine. We thus introduce an intermediate concept, called a \emph{factorization frame} for $E$. By Theorem \ref{thm:hom_dimensionsI} or Corollary \ref{cor:same_matrices}, factorization frames always exist for indecomposable bundles, provided we deal with domestic weight triples. But, factorization frames may also exist in other situations. In particular, extension bundles admit factorization frames, without any restriction on the weight triple $(p_1,p_2,p_3)$.

Assuming an arbitrary weight triple $(a,b,c)$, Theorem \ref{KLM_proj_cov} provides us with an explicit projective cover for extension bundles, and thus for indecomposable vector bundles of rank two. Hence a result very close in content to Theorem~\ref{thm:hom_dimensionsI} can be shown for indecomposable Cohen-Macaulay modules of rank two for then arbitrary weight triples $(a,b,c)$, by just following the lines of the proof for Theorem~\ref{thm:hom_dimensionsI}:

\begin{Thm} \label{thm:hom_dimensions}
We assume the above setting \eqref{eqn:matrixfact:projresI} for a weighted projective line $\XX$ of type $(a,b,c)$, where $M$ is an indecomposable $\LL$-graded Cohen-Macaulay module of rank two.

Let $\vy_0, \vy_1$ be members of $\LL$ such that $T(\vy_i)$ is a $T$-direct summand of $P_i$ ($i=0,1$), and, accordingly, $S(\vy_i)$ is an $S$-direct summand of $\bar{P}_i$. Then reduction modulo $(f)$ induces an isomorphism
\begin{equation} \label{eqn:iso:homspaces}
T_{\vy_0-\vy_1}=\Hom_T(T(\vy_1),T(\vy_0))\stackrel{\cong}\lra \Hom_S(S(\vy_1),S(\vy_0))=S_{\vy_0-\vy_1}.
\end{equation}
Moreover, the Hom-spaces from \ref{eqn:iso:homspaces} are either both zero, in which case $\vy_1\not\leq \vy_0$ or else $0 < \delta(\vy_0-\vy_1)< \delta(\vc)$, in which case $T_{\vy_0-\vy_1}=kx$ (and also $S_{\vy_0-\vy_1}=kx$) for a monomial $x$ with small exponents. \hfill\qed
\end{Thm}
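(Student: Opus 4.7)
The plan is to adapt the proof of Theorem \ref{thm:hom_dimensionsI} to the rank-two setting for arbitrary weight triples, substituting the explicit data from Theorem \ref{KLM_proj_cov} for the stability argument, which is no longer available. I would begin exactly as in the domestic case: since $f$ belongs to the graded maximal ideal of $T$, Nakayama's lemma yields $T_\vu = 0$ if and only if $S_\vu = 0$, so the two Hom-spaces vanish together. When they are nonzero, the bound $0 < \delta(\vu) < \delta(\vc)$ (to be established below) forces $T_{\vu - \vc} = 0$, so the surjection $T_\vu \twoheadrightarrow S_\vu$, whose kernel is $f \cdot T_{\vu - \vc}$, becomes an isomorphism; moreover $\vu$ then admits a normal form $\sum m_i \vx_i$ with $0 \leq m_i \leq p_i - 1$ and at least one $m_i$ positive, so both Hom-spaces are one-dimensional, spanned by the same small monomial.

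Next, passing via sheafification, write $E = \widetilde{M}$. By the classification of rank-two bundles, $E \cong \extb{L}{\vx}$ for a line bundle $L = \Oo(\vz)$ and $\vx = \sum_i l_i \vx_i$ with $0 \leq l_i \leq p_i - 2$. I then identify $\bar P_0 = \pc{E}$ and $\bar P_1 = \ih{E(-\vc)}$: the second equality uses that, in the Frobenius category $\vect\XX$, the second term of a minimal projective resolution of $E$ is simultaneously the injective hull of the second syzygy, together with the $2$-periodic identification $\Omega^2 E \cong E(-\vc)$. Theorem \ref{KLM_proj_cov}, applied to $\extb{L}{\vx}$ and to $\extb{L(-\vc)}{\vx}$, then furnishes explicit decompositions of $\bar P_0$ and $\bar P_1$ into four line bundle summands each.

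What remains is a finite check: enumerate the $4 \times 4$ pairs of summands $\Oo(\vy_0) \subset \bar P_0$ and $\Oo(\vy_1) \subset \bar P_1$, write $\vy_0 = \vz + \vu_0$ and $\vy_1 = \vz + \vu_1$, and verify for each that either $\vu_0 - \vu_1 \not\geq 0$ in $\LL$ (so $S_{\vu_0 - \vu_1} = 0$ and the Hom-space vanishes), or else $\vu_0 - \vu_1$ is in normal form $\sum m_i \vx_i$ with $0 \leq m_i \leq p_i - 1$ and $0 < \delta(\vu_0 - \vu_1) < \delta(\vc)$. The reductions rely only on the relations $\vw = \vc - \vx_1 - \vx_2 - \vx_3$ and $-(1+l_i)\vx_i \equiv (p_i - 1 - l_i)\vx_i \pmod{\vc}$, so the case analysis is brief.

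The main obstacle is conceptual rather than computational: one must be confident that the argument really does adapt despite the loss of stability of $E$. The resolution is that, for rank-two extension bundles, the slope-type inequalities that stability would have supplied for free in the domestic case are visible directly from the explicit form of $\pc{E}$ and $\ih{E(-\vc)}$ given by Theorem \ref{KLM_proj_cov}, which makes the pair-by-pair enumeration a tractable substitute for the uniform argument.
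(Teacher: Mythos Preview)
Your proposal is correct and follows precisely the route the paper indicates: replace the stability-based slope inequalities from the proof of Theorem~\ref{thm:hom_dimensionsI} by the explicit description of $\pc{E}$ and $\ih{E(-\vc)}$ furnished by Theorem~\ref{KLM_proj_cov}, and then verify the degree bound $0<\delta(\vy_0-\vy_1)<\delta(\vc)$ by inspecting the finitely many pairs of summands. The paper merely asserts that ``a result very close in content \ldots\ can be shown \ldots\ by just following the lines of the proof for Theorem~\ref{thm:hom_dimensionsI}'' and leaves the verification implicit; your identification $\bar P_1=\ih{E(-\vc)}$ via $\Omega^2 E\cong E(-\vc)$ and the $4\times 4$ enumeration make that verification explicit, which is exactly what is needed.
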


\begin{Defi}
	\label{def_matrix_frame}
We assume a domestic weight triple. \emph{A factorization frame} $(U,V)$ for an indecomposable vector bundle $E$ of rank $r\geq 2$, is a pair of $2r\times 2r$-matrices over $T$, obtained by Theorem \ref{thm:hom_dimensions} from line bundle decompositions
\begin{equation} \label{eqn:proj_resolution}
\bar{P}_0=\bigoplus\limits_{j=1}^{2r}\Oo(\vec{z}_j),\quad \bar{P}_1=\bigoplus\limits_{i=1}^{2r}\Oo(\vec{y}_i), \quad \bar{P}_0(-\vc)=\bigoplus\limits_{j=1}^{2r}\Oo(\vec{z}_j-\vc)
\end{equation}
of the projectives from a minimal projective resolution of $E$, as follows: The $(i,j)-$ entry of $U$ is defined as follows:

\begin{itemize}
\item[(a)] If $\HomX(\Oo(\vy_i), \Oo(\vec{z}_j))=0$, then the entry is zero.
 \item[(b)] Otherwise, $\HomX(\Oo(\vy_i), \Oo(\vec{z}_j))=S_{\vec{z}_j-\vy_i}=kx_1^{l_1}x_2^{l_2}x_3^{l_3}$ with $0\leq l_i\leq (p_i-1)$. Then the $(i,j)$-entry is given by the monomial  $x_1^{l_1}x_2^{l_2}x_3^{l_3}$.
\end{itemize}
The matrix $V$ is defined in a similar fashion from the decompositions of $P_0(-\vc)$ and $P_1$.
\end{Defi}
By Definition \ref{def_matrix_frame}, a factorization frame for $E$ depends on the line bundle decompositions for the terms of a minimal projective resolution of $E$. However, assuming domestic type, for $\rk E\leq 5$, the factorization frame attached to $E$ is unique by Proposition \ref{prop:multiplicity:line_bundle_summands}.
We note also that usually, a factorization frame $(U,V)$ for $E$ will not satisfy the matrix factorization property $UV=f\,\id=VU$. However, by Corollary \ref{cor:same_matrices}, each factorization frame for $E$ can be specialized to a matrix factorization for $E$:
\begin{Lem} \label{lem:specialization}
We assume domestic type and assume $E$ to be indecomposable of rank at least two. Then
each factorization frame $(U,V)$ for $E$ can be specialized to a $T$-matrix factorization for $f$, representing $E$, by modifying the entries of the factorization frame by (possibly zero) scalars in such a way that the resulting matrices $u$, $v$ satisfy $uv=f\,\id=vu$. Conversely, each matrix factorization $(u,v)$ for $f$, representing $E$, arises this way.\hfill\qed
\end{Lem}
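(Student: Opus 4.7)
Throughout, identify $M$ with the Cohen-Macaulay module attached to $E$; since $\rk E\geq 2$, the module $M$ has no projective summands. Fix a factorization frame $(U,V)$ obtained from compatible line bundle decompositions \eqref{eqn:proj_resolution}. The plan is to combine Eisenbud's theorem with Corollary~\ref{cor:same_matrices} and the canonical identification of Hom-spaces provided by Theorem~\ref{thm:hom_dimensionsI}.

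To exhibit a specialization of $(U,V)$, I would first invoke Eisenbud's theorem to obtain some $T$-matrix factorization $(\phi_0,\psi_0)$ of $f$ with cokernel $M$. Reduction modulo $f$ followed by iteration as in \eqref{eqn:projective_resolution} turns this into a minimal $\LL$-graded projective $S$-resolution of $M$. By uniqueness of minimal projective resolutions, this resolution is $S$-isomorphic to the one whose zeroth and first term are those in \eqref{eqn:proj_resolution}; I would then lift such an $S$-isomorphism to a $T$-isomorphism between the corresponding $T$-free modules, which is possible because Theorem~\ref{thm:hom_dimensionsI} identifies each relevant $T$-Hom-space with the corresponding $S$-Hom-space. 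Transporting $(\phi_0,\psi_0)$ along this lift yields a matrix factorization $(u,v)$ whose reduction modulo $f$ is presented in exactly the decompositions \eqref{eqn:proj_resolution}. A direct appeal to Corollary~\ref{cor:same_matrices} then expresses each $(i,j)$-entry of $(\bar u,\bar v)$ as a scalar multiple of the small monomial that appears in the $(i,j)$-entry of $(U,V)$, and a second application of Theorem~\ref{thm:hom_dimensionsI} shows that the corresponding entry of $(u,v)$ over $T$ is the very same scalar multiple of the very same monomial. Hence $(u,v)$ is a specialization of $(U,V)$ satisfying $uv=f\,\id=vu$.

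For the converse, I would start with an arbitrary $T$-matrix factorization $(u,v)$ representing $E$. Its reduction $(\bar u,\bar v)$ is part of a minimal $\LL$-graded projective $S$-resolution of $M$ with respect to the line bundle decompositions of $P_0$ and $P_1$ underlying $(u,v)$; these are automatically compatible in the sense of the setup preceding Theorem~\ref{thm:hom_dimensionsI}. Feeding these decompositions into Definition~\ref{def_matrix_frame} yields a factorization frame $(U,V)$, and Corollary~\ref{cor:same_matrices} combined with Theorem~\ref{thm:hom_dimensionsI} forces each entry of $(u,v)$ to be a scalar multiple of the corresponding entry of $(U,V)$, proving that $(u,v)$ arises from $(U,V)$ by specialization.

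The delicate step I anticipate is the alignment in the existence direction: the abstract matrix factorization $(\phi_0,\psi_0)$ produced by Eisenbud's theorem must be matched with the prescribed decompositions \eqref{eqn:proj_resolution} without disturbing the monomial pattern of its entries. What rescues the argument is the rigidity built into Theorem~\ref{thm:hom_dimensionsI}: every relevant Hom-space, in $T$ as well as in $S$, is either zero or one-dimensional and spanned by a single small monomial, and the canonical iso $T_{\vy_0-\vy_1}\cong S_{\vy_0-\vy_1}$ intertwines the two sides. Any $S$-linear base change one would use to align the resolutions therefore lifts uniquely to $T$ and preserves the monomial content, so no extra polynomial terms can be introduced in the passage from the $S$-resolution back up to a $T$-matrix factorization.
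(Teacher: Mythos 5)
Your argument is correct and follows exactly the route the paper intends: the lemma is stated there without proof, being regarded as an immediate consequence of Eisenbud's theorem together with Theorem~\ref{thm:hom_dimensionsI} and Corollary~\ref{cor:same_matrices}, which is precisely the combination you spell out for both directions. The only cosmetic point is that lifting the aligning isomorphism of graded free modules from $S$ to $T$ needs just surjectivity of reduction modulo $(f)$ in each degree plus graded Nakayama, rather than the isomorphism statement of Theorem~\ref{thm:hom_dimensionsI}, which concerns Hom-spaces between summands of $P_1$ and $P_0$ rather than endomorphisms of $P_0$ itself.
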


Let $E$ be an indecomposable vector bundle of rank $\geq2$ for domestic weight type. Our next result implies that, with the single exception of the members $E$ from the single $\tau$-orbit of indecomposable rank-six bundles for weight type $(2,3,5)$, the decomposition of the projective cover $\pc{E}$ into line bundles is multiplicity-free.

\begin{Prop} \label{prop:multiplicity:line_bundle_summands}
Let $\XX$ be of domestic type and let $E$ be an indecomposable vector bundle of rank $r\geq 2$, with the projective cover $\pc{E}=\bigoplus\limits_{i=1}^{2r}L_i$.
If $ \rk E\leq 5$, then $L_1,\dots, L_{2r}$ are pairwise non-isomorphic line bundles. If $\rk E=6$, and then $\XX$ of weight type $(2,3,5)$, there are exactly $11$ non-isomorphic line bundle among $L_1,\dots, L_{12}$.
\end{Prop}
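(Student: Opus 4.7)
The plan is a case-by-case verification, exploiting the explicit line-bundle lists for projective covers already compiled in Propositions~\ref{proj_cover_2_2_n}--\ref{proj_cover_2_3_5}. First I would reduce to orbit representatives: since $\pc{E(\vx)}=\pc{E}(\vx)$ for all $\vx\in\LL$, and the assignment $\Oo(\vy)\mapsto \Oo(\vy+\vx)$ is a bijection on iso-classes of line bundles, the multiset of iso-classes of summands of $\pc{E}$ is $\LL$-invariant, hence in particular invariant along each $\tau$-orbit. Therefore it suffices to check the claim on one representative of each $\tau$-orbit of indecomposable bundles of rank $\geq 2$, and these are precisely the bundles whose projective covers are tabulated in the quoted propositions.

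Next, I would translate the question into the Picard group $\LL$: two line bundles $\Oo(\vy)$ and $\Oo(\vy')$ are isomorphic if and only if $\vy=\vy'$ in $\LL$, so pairwise non-isomorphism of the summands of $\pc{E}$ is equivalent to pairwise distinctness of the elements of $\LL$ listed in each table. Distinctness is decided mechanically by writing each element in the normal form $l_1\vx_1+l_2\vx_2+l_3\vx_3+l\vc$ with $0\leq l_i\leq p_i-1$ and $l\in\ZZ$ (using the defining relations $p_i\vx_i=\vc$), and comparing the resulting tuples coordinatewise.

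For every representative $E_i$, $F_j$, $\tauX G_2$, $G_3$, $F_4$, $E_5$ appearing in Propositions~\ref{proj_cover_2_2_n}--\ref{proj_cover_2_3_5} except one, a direct inspection of the tabulated list produces pairwise distinct entries, establishing the multiplicity-free assertion for $\rk E\leq 5$. The single exception is the rank-six bundle $E_6$ of weight type $(2,3,5)$ in Proposition~\ref{proj_cover_2_3_5}: its projective cover carries the element $\vw-\vx_3$ at both the third and the tenth slot, while the remaining ten entries are pairwise distinct and all different from $\vw-\vx_3$. Hence $\pc{E_6}$ has exactly $11$ pairwise non-isomorphic line bundle summands, proving the second claim.

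The only obstacle is the sheer bookkeeping involved in reducing a dozen entries per table to normal form and comparing them; no conceptual difficulty arises, since the action of $\LL$ is purely combinatorial. The only genuinely interesting observation is that exactly one coincidence of summands appears across all tables, precisely at $E_6$ in type $(2,3,5)$, matching the fact that this is the unique orbit for which the singularity category is known to exhibit an exceptional multiplicity in the minimal resolution.
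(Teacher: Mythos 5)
Your plan is correct and coincides with the paper's own (one-line) proof, which is precisely a case-by-case inspection of the tables in Propositions~\ref{proj_cover_2_2_n}--\ref{proj_cover_2_3_5}: the reduction to $\tau$-orbit representatives via $\pc{E(\vx)}=\pc{E}(\vx)$, the normal-form comparison in $\LL$, and the identification of the unique coincidence $\vw-\vx_3$ occurring twice in $\pc{E_6}$ (whence exactly $11$ iso-classes) are exactly what is needed. One caveat for your ``direct inspection'': the printed table for $F_4$ in Proposition~\ref{proj_cover_2_3_5} lists $-\vx_3$ twice, but recomputing $\pc{F_4}=\pc{\tauX F_2}\oplus\pc{F_2}$ shows that the third entry should read $\vw-\vx_3$, so that list is in fact multiplicity-free and the statement stands.
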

\begin{pf}
Case-by-case analysis, based on Propositions \ref{proj_cover_2_2_n} to \ref{proj_cover_2_3_5}.
\end{pf}

Of course, not every matrix factorization $(u,v)$, obtained from a factorization frame $(U,V)$, attached to an indecomposable vector bundle $E$ by specialization, will satisfy $\cok{u,v}=E$, see Example \ref{expl:decompose}, where the matrices $u$ and $v$ contain too many zero entries.
We thus investigate situations when the modified entries of a factorization frame are not allowed to be zero.
For this the following general  result from  \cite[Prop. 3.8]{Kussin:Lenzing:Meltzer:2013adv} will be useful. Here, we say that a map $h:E\to L'$ is a \emph{component map} of the injective hull $j_E:E\to \ih E$, if $L'$ is a line bundle and $E=L'\oplus E'$ such that $h$ is the restriction of $j_E$ to the summand $L'$. Component maps of projective covers are defined in a similar way.

\begin{Prop}
\label{fact}
Let $\XX$ be a weighted projective line of triple weight type.
Let $E$ be a vector bundle and $L$, $L'$ be line bundles. Then the  following properties
hold.

\begin{itemize} \label{prop:component.maps}
\item[(i)]
 If $h:E\to L'$ is a component map of the injective hull $j_E:E\to \ih E$, then $h$ is an epimorphism in $\coh\XX$.

\item[(ii)] If $l : L\to E$ is a component map of the projective cover $\pi_E : \pc E\to E$,
 then $l$ is a monomorphism in $\coh\XX$ and moreover the cokernel of $l$, formed in $\coh\XX$, is a vector bundle.
\end{itemize}
\end{Prop}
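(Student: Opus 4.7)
The plan is to derive both parts by a minimality argument for the projective cover (respectively the injective hull) inside the Frobenius category $\vect\XX$, whose indecomposable projective-injectives are exactly the line bundles. For (ii), I first note that $l\neq 0$: were $l$ zero, then $\pi'\colon P'\to E$ alone would still be a distinguished epimorphism, contradicting the minimality of $\pc{E}$. Granted $l\neq 0$, that $l$ is a monomorphism in $\coh\XX$ is straightforward, since the image of $l$ is a non-zero subsheaf of the torsion-free bundle $E$, hence of rank one, forcing its kernel (a rank-zero subsheaf of the line bundle $L$) to be zero.

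The substantive point in (ii) is that the cokernel of $l$ is torsion-free. Suppose for contradiction that it has non-trivial torsion; then the preimage in $E$ of this torsion subsheaf is a rank-one subsheaf $L^{*}\subseteq E$ properly containing $L$, and since $E$ is torsion-free, $L^{*}$ is itself a line bundle. Thus $l$ factors as $L\to L^{*}\to E$, the first arrow being multiplication by a homogeneous element of positive degree $\vy$, so $L^{*}\cong L(\vy)$. Because $\pi_E$ is a distinguished epimorphism, the inclusion $l^{*}\colon L^{*}\to E$ lifts to $(a,b)\colon L^{*}\to L\oplus P'$; but $\Hom(L^{*},L)=\Hom(L(\vy),L)=S_{-\vy}=0$, forcing $a=0$ and hence $l^{*}=\pi'\circ b$. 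Composing with the inclusion $L\to L^{*}$ produces $\phi_{12}\colon L\to P'$ with $\pi'\circ\phi_{12}=l$, and then the endomorphism of $\pc{E}=L\oplus P'$ with block matrix $\left(\begin{smallmatrix}0 & 0\\ \phi_{12}& \id\end{smallmatrix}\right)$ satisfies $\pi_E\circ\phi=\pi_E$ yet is not an isomorphism, contradicting the minimality of the projective cover.

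Part (i) is proved by the dual argument inside $\ih{E}=L'\oplus E''$: if $h=0$, the residual map $E\to E''$ is already a distinguished monomorphism into a smaller injective, contradicting minimality of $\ih{E}$; otherwise the image of $h$ is a proper line subbundle $L''\subsetneq L'$, and $h$ factors as $\iota\circ h'$ with $\iota\colon L''\hookrightarrow L'$. Injectivity of the line bundle $L''$ in the Frobenius structure lets $h'$ extend along $j_E$ to some $(a,b)\colon L'\oplus E''\to L''$; vanishing of $\Hom(L',L'')$ forces $a=0$, giving $b\circ j_0=h'$, and then $\psi_{12}:=\iota\circ b$ furnishes, via the block endomorphism $\left(\begin{smallmatrix}0 & \psi_{12}\\ 0& \id\end{smallmatrix}\right)$ of $\ih{E}$, a non-invertible endomorphism fixing $j_E$, again contradicting minimality. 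The main obstacle I anticipate is not the computations themselves but the careful formulation of the minimality characterization of projective covers and injective hulls in the Frobenius category $\vect\XX$---namely, that every endomorphism of the cover or hull preserving the canonical map to or from $E$ must be an automorphism. This is a standard Krull--Schmidt Frobenius feature, and once admitted, the remaining ingredients (vanishing of $\Hom(L^{*},L)$ for $L\subsetneq L^{*}$, and torsion-freeness of subsheaves of vector bundles) are immediate from the structure of $\coh\XX$ recalled earlier in the paper.
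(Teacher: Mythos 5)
Your proof is correct, but it takes a genuinely different route from the paper's. The paper disposes of the proposition (quoted from Kussin--Lenzing--Meltzer) by inspection: part (i) is read off from the explicit formula for the injective hull in Theorem \ref{KLM_proj_cov}, and part (ii) from the case-by-case lists of projective covers in Section \ref{sec_proj_covers}. You instead give a uniform abstract argument: minimality of the cover (every endomorphism $\phi$ of $\pc{E}$ with $\pi_E\phi=\pi_E$ is an automorphism, equivalently no proper summand still covers) combined with the elementary facts that subsheaves of vector bundles on $\XX$ are vector bundles and that $\HomX(L(\vy),L)=S_{-\vy}=0$ whenever $\vy>0$. The key step --- lifting the inclusion of the saturation $L^{*}\supsetneq L$ through $\pi_E$, killing the component into $L$ by the Hom-vanishing, and producing a non-invertible endomorphism fixing $\pi_E$ --- is sound, as is the dual argument for (i). What your approach buys is uniformity and generality: it applies verbatim to every triple weight type and every rank, which actually matches the stated generality of the proposition better than the paper's sketch for (ii), whose cited case-by-case computations only cover rank two (general weights) and the domestic lists. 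What the paper's approach buys is brevity and the concrete identification of the cokernels of the component maps, which it needs later anyway. The only point you should make explicit if you write this up is the equivalence, in the Krull--Schmidt exact category $\vect\XX$, of the two formulations of minimality you invoke (no proper summand covers versus endomorphisms fixing $\pi_E$ are automorphisms); you flag this yourself, and it is indeed standard.
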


\begin{pf}
Property (i) immediately follows from the explicit description of the injective hull by Theorem \ref{KLM_proj_cov}, while the proof of property (ii) uses the case-by-case description of projective covers in Section \ref{sec_proj_covers}.
\end{pf}

As an immediate consequence we get.

\begin{Cor} \label{fact2}
Let $\XX$ be a weighted projective line of triple weight type.
For an exceptional vector bundle $E$ we have

\begin{itemize}
\item[(i)] $\ExtX(E, \ih E)=0$,
\item[(ii)] $\ExtX(\pc E,  E)=0$,
\item[(iii)] $\ExtX(\pc E, \ih E)=0$.
\end{itemize}
\end{Cor}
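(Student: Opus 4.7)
The plan is to reduce each of (i), (ii), (iii) to the single identity $\ExtX(E,E)=0$ built into the exceptionality of $E$, exploiting that $\coh\XX$ is hereditary and that $\pc E$ and $\ih E$ are finite direct sums of line bundles. In particular, each $\Ext^1$ appearing in (i), (ii), (iii) splits as a direct sum indexed by those line bundle summands, so it suffices to argue one summand at a time.

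For (i), let $L$ be a line bundle summand of $\ih E$, with component map $h\colon E\to L$. By Proposition~\ref{fact}(i), $h$ is an epimorphism in $\coh\XX$, and because subobjects of a vector bundle are automatically torsion-free, its kernel $K$ is a vector bundle. Applying $\Hom(E,-)$ to the short exact sequence $0\to K\to E\to L\to 0$ and using that $\Ext^2$ vanishes in the hereditary category $\coh\XX$, I get the tail $\ExtX(E,E)\lra \ExtX(E,L)\lra 0$; exceptionality gives $\ExtX(E,L)=0$, and summing over the summands of $\ih E$ yields (i).

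For (ii), I argue dually: for a line bundle summand $L'$ of $\pc E$ with component map $l\colon L'\to E$, Proposition~\ref{fact}(ii) provides a short exact sequence $0\to L'\to E\to C\to 0$ with $C$ a vector bundle. Applying $\Hom(-,E)$ and again invoking hereditariness, the tail reads $\ExtX(E,E)\lra \ExtX(L',E)\lra 0$, so exceptionality forces $\ExtX(L',E)=0$; summation over summands of $\pc E$ delivers (ii).

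For (iii), I combine the previous two. Applying $\Hom(-,L)$ to the short exact sequence $0\to L'\to E\to C\to 0$ coming from Proposition~\ref{fact}(ii), where now $L$ is a line bundle summand of $\ih E$, the hereditary tail is $\ExtX(E,L)\lra \ExtX(L',L)\lra 0$. The left-hand group vanishes by (i), hence so does the right; summing over all pairs $(L',L)$ yields (iii). There is no substantive obstacle beyond granting Proposition~\ref{fact}: once its component-map statements are in hand, the entire argument is a routine diagram chase in the hereditary category $\coh\XX$, driven by the exceptional hypothesis $\ExtX(E,E)=0$.
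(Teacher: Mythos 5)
Your proof is correct and follows essentially the same route as the paper: reduce each vanishing to $\ExtX(E,E)=0$ by applying Hom-functors to the short exact sequences furnished by the component maps of Proposition~\ref{fact} and using heredity of $\coh\XX$. The only (immaterial) difference is in (iii), where you propagate the vanishing from (i) along the monomorphism $L'\to E$, whereas the paper propagates it from (ii) along the epimorphism $E\to L$; both are one-line consequences of the same mechanism.
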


\begin{pf} Concerning (i) let $L'$ be a line bundle summand of $\ih E$ and $v: E \ra L'$ be a corresponding component map of the injective hull $j_E: E\to \ih{E}$. By Proposition \ref{prop:component.maps} then $v:E\to L'$ is an epimorphism. Thus, by heredity of $\coh\XX$, the condition $\ExtX(E,E)=0$ implies that $\ExtX(E,L')=0$. This happens for each line bundle summand $L'$ of $\ih{E}$, therefore  $\ExtX(E, \ih E)=0$. The proof of (ii) is dual. Concerning property (iii), we argue as before: Since $v$ is an epimorphism,  property (ii) implies $\ExtX(\pc{E},L')=0$ for each line bundle summand $L'$ of $\ih{E}$, and property (iii) follows.
\end{pf}

\begin{Lem}
 \label{lem_composition_is_non_zero}
Let $\XX$ be a weighted projective line of triple weight type and let $E$ be an exceptional vector bundle.  Let $\pi_E : \pc E\to E$ (respectively\ $j_E:E\to \ih{E}$) be the projective cover (respectively the injective hull) of $E$. Let $L$ (respectively\ $L'$) be a line bundle summand of the projective cover $\pc{E}$ (respectively the injective hull $\ih{E}$), and let $u: L\to E$ (respectively\ $v:E\to L'$) be corresponding component maps of $\pi_E : \pc E\to E$ (respectively $j_E:E\to \ih{E}$).

We assume that $\HomX(L,E)=k$ and $\HomX(E,L')=k$. Then the composition $v\,u$ is the zero map if and only if $ \HomX(L,L')$ equals zero.
\end{Lem}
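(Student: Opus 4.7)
The backward implication is immediate: $\HomX(L,L')=0$ trivially forces $vu=0$.

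For the forward implication, I assume $vu=0$ and aim to conclude $\HomX(L,L')=0$. Proposition \ref{fact}(ii) supplies a short exact sequence $0 \to L \stackrel{u}{\to} E \stackrel{p}{\to} C \to 0$ with $C$ a vector bundle. I apply $\HomX(-,L')$; the vanishings $\ExtX(E,L')=0$ from Corollary \ref{fact2}(i) and $\ExtX(L,L')=0$ from Corollary \ref{fact2}(iii) collapse the six-term exact sequence into
\[
0 \to \HomX(C,L') \to \HomX(E,L') \stackrel{u^{\ast}}{\to} \HomX(L,L') \to \ExtX(C,L') \to 0.
\]
Since $\HomX(E,L')=kv$, the image of $u^{\ast}$ is exactly $k\cdot vu$; under the hypothesis $vu=0$ this image vanishes, giving $\HomX(L,L')\cong \ExtX(C,L')$. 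The lemma is therefore reduced to showing $\ExtX(C,L')=0$.

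Next I exploit that $vu=0$ forces $v$ to factor through $p$ as $v=\phi p$ with $\phi\colon C\to L'$ an epimorphism; setting $M:=\ker\phi$, a vector bundle, I obtain a second short exact sequence $0 \to M \to C \stackrel{\phi}{\to} L' \to 0$. Applying $\HomX(-,L')$ and using $\ExtX(L',L')=0$ (valid for any line bundle in any triple weight type, since $S_\vom=0$), I extract $\ExtX(C,L')\cong\ExtX(M,L')$. The problem thus reduces to showing $\ExtX(M,L')=0$. For the rank-two case---the only case guaranteed by Theorem \ref{KLM_proj_cov} over a general triple weight type---the bundle $C$ is a line bundle of rank one, so its proper subsheaf $M$ is torsion-free of rank zero, hence $M=0$ and $\ExtX(M,L')=0$ holds trivially.

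The principal obstacle is the remaining case: exceptional bundles of rank $\ge 3$, which arise only in the domestic situation. Here $M$ is a nontrivial vector bundle, and I would establish $\ExtX(M,L')\cong D\HomX(L',M(\vom))=0$ by a case-by-case analysis grounded in the explicit line-bundle decompositions of $\pc E$ and $\ih E$ in Propositions \ref{proj_cover_2_3_3}--\ref{proj_cover_2_3_5}, using the Hom-orthogonality of the summands of $\ih E$ together with Serre duality to exclude the existence of nonzero maps $L' \to M(\vom)$.
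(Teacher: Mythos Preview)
Your reduction to the vanishing of $\ExtX(C,L')$, where $C=\cok{u}$, is correct and matches the paper's pivot point. The divergence comes after that. You factor $v=\phi p$, introduce $M=\ker\phi$, and reduce further to $\ExtX(M,L')=0$; this is fine for rank two, where $M=0$. But your treatment of rank $\ge 3$ has a genuine gap on two counts. First, the claim that exceptional bundles of rank $\ge 3$ ``arise only in the domestic situation'' is false: tubular and wild weighted projective lines admit exceptional vector bundles of arbitrarily high rank (obtained, for example, by iterated mutation from line bundles). The lemma is stated for an arbitrary weight triple, so a case-by-case check confined to Propositions \ref{proj_cover_2_3_3}--\ref{proj_cover_2_3_5} would not cover the statement. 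Second, even in the domestic range you do not actually carry out the analysis; a promise of a case check is not a proof.

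The paper's argument avoids this detour entirely and is rank-independent. Instead of introducing $M$, apply $\HomX(-,E)$ to the same sequence $0\to L\to E\to C\to 0$. Exceptionality of $E$ gives $\HomX(E,E)=k=\HomX(L,E)$ and $\ExtX(E,E)=0$, so $\dim\HomX(C,E)=\dim\ExtX(C,E)$. A nonzero map $C\to E$ composed with the epimorphism $p\colon E\to C$ would produce a nonzero, noninvertible endomorphism of $E$, contradicting $\End(E)=k$; hence $\HomX(C,E)=0$, so $\ExtX(C,E)=0$. Since $v\colon E\to L'$ is an epimorphism and the category is hereditary, $\ExtX(C,L')=0$ follows immediately. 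This one extra step closes your argument uniformly, with no case distinction on $\rk E$ and no restriction on the weight type.
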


\begin{pf}
Assume, for contradiction, that  $\HomX(L,L')\neq 0$ but $v\,u =0$. By Proposition~\ref{prop:component.maps} the map $v:E\to L'$ is an epimorphism. By heredity of $\coh\XX$, the condition $\ExtX(E,E) =0$ then implies that $\ExtX(E,L') =0$.

Next, we apply  the functor $\HomX(-,L')$ to the non-split exact sequence $ (\star) \quad 0\to L \stackrel{u}{\lra} E\stackrel{p}{\lra}F\to 0$, yielding exactness of the sequence $\HomX(E,L')\stackrel{-\circ u}{\to}\HomX(L,L') \to \ExtX(F,L') \to \ExtX(E,L')=0$. Because $\HomX(E,L')=k$ and  $v \circ u =0$, the map $-\circ u:\HomX(E,L')\to\HomX(L,L')$ is zero. Therefore $\ExtX(F,L')\cong\HomX(L,L')$ is non-zero by assumption. On the other hand, applying the functor $\HomX(-,E)$ to $(\star )$, we get exactness of $ 0 \to \HomX(F,E) \to \HomX(E,E) \to \HomX(L,E) \to \ExtX(F,E) \to \ExtX(E,E)=0$. Since, by assumption, $\HomX(E,E)=k=\HomX(L,E)$ we obtain that $\dim\HomX(F,E)=\dim\ExtX(F,E)$, and we are going to show that both terms vanish.

Assuming $\HomX(F,E)\neq0$, we compose the epimorphism $p$ with a non-zero map from $F$ to $E$ and thus obtain an endomorphism of $E$, that is neither zero nor an isomorphism, in obvious contradiction to $\Hom(E,E)=k$. Hence $\HomX(F,E)=0$ and then $\ExtX(F,E)=0$.  Because $v:E\to L'$ is an epimorphism, the condition $\ExtX(F,E)=0$ finally implies that $\ExtX(F,L')=0$, contrary to what was established before.
\end{pf}

The following consequence yields certain limitations for specializing factorization frames to matrix factorizations. We adhere to the notations of Definition \ref{def_matrix_frame} and further refer to Lemma \ref{lem:specialization}.

\begin{Cor}
	\label{cor_matrix_frame_dim_1}
Assume a matrix factorization $(u,v)$ of $f$ over $T$ that is attached to $E$, is obtained from a factorization frame $(U,V)$ for $E$ by specialization. Assume, in particular, that $u_{ij}=\la_{ij}U_{ij}$. If $\Hom(\Oo(\vy_i),E)=k=\Hom(E,\Oo(\vz_j))$ then the scalar $\la_{ij}$ must be non-zero. A similar result holds for the specialization of $V$ to $v$.
\end{Cor}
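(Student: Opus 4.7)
The plan is to reduce the statement to an application of Lemma \ref{lem_composition_is_non_zero}, applied to the syzygy bundle. I would first invoke Corollary \ref{cor:same_matrices}: the $T$-matrix factorization $(u,v)$ and its reduction $(\bar u,\bar v)$ modulo $(f)$ are described by the same matrix pair with the same scalar entries. Hence $\la_{ij}=0$ if and only if $\bar u_{ij}=0$, and it is enough to rule out the latter in the projective resolution over $S$, equivalently in the Frobenius category $\vect\XX$.

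The key step is the identification of $\bar u$ with a cover morphism in the sense of Lemma \ref{lem_composition_is_non_zero}. By Theorem \ref{Thm:2ab} the first syzygy of $E$ is $E(-\vx_1)$, and $\bar P_0=\pc{E}=\ih{E(-\vx_1)}$ while $\bar P_1=\pc{E(-\vx_1)}$. Consequently $\bar u$ factors as
$$\bar u\colon\ \bar P_1\xrightarrow{\pi_{E(-\vx_1)}} E(-\vx_1)\xrightarrow{j_{E(-\vx_1)}}\bar P_0,$$
so the $(i,j)$-entry equals the composition $h_j\circ l_i$, where $l_i\colon\Oo(\vy_i)\to E(-\vx_1)$ is a component of the projective cover and $h_j\colon E(-\vx_1)\to\Oo(\vz_j)$ is a component of the injective hull. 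The bundle $E(-\vx_1)$ is exceptional, since $E$ is and degree shift is a self-equivalence.

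I would then apply Lemma \ref{lem_composition_is_non_zero} to $E(-\vx_1)$ with $L=\Oo(\vy_i)$ and $L'=\Oo(\vz_j)$: its hypotheses follow from the dimensional conditions of the corollary, $\Hom(\Oo(\vy_i),E)=k=\Hom(E,\Oo(\vz_j))$, once these are translated via the degree shift by $\vx_1$ (so that they read off the line bundle summands of $\pc{E(-\vx_1)}$ and $\ih{E(-\vx_1)}$ rather than of $\pc{E}$ and $\ih{E}$). Because $U_{ij}$ is a non-zero monomial, $\Hom(\Oo(\vy_i),\Oo(\vz_j))\neq0$; the Lemma then forces $h_j\circ l_i\neq 0$, hence $\bar u_{ij}\neq0$ and $\la_{ij}\neq0$. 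The analogous claim for $V$ follows by applying the same argument to $\bar v$, which plays the role of the cover morphism for $E(-\vc)$.

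The main obstacle I anticipate is the careful bookkeeping of these degree shifts. Definition \ref{def_matrix_frame} indexes $\vy_i$ and $\vz_j$ as the summands of $\bar P_1$ and $\bar P_0$, which are the projective cover, respectively the injective hull, of the \emph{syzygy} $E(-\vx_1)$; so when matching the hypotheses phrased in terms of $E$ with those naturally appearing in Lemma \ref{lem_composition_is_non_zero}, one must use the identifications $\pc{E}=\ih{E(-\vx_1)}$ and $\pc{E(-\vx_1)}=\pc{E}(-\vx_1)$ provided by Theorem \ref{Thm:2ab}, together with the fact that Hom-dimensions are preserved under degree shift.
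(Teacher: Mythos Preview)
Your approach is essentially the paper's own: the corollary is presented there without proof, as an immediate consequence of Lemma~\ref{lem_composition_is_non_zero}, and your argument spells out precisely how --- recognize $\bar u$ as the cover morphism $j_{E(-\vx_1)}\circ\pi_{E(-\vx_1)}$ for the syzygy, then apply the lemma to $E(-\vx_1)$ with $L=\Oo(\vy_i)$ and $L'=\Oo(\vz_j)$.

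Your caveat about the degree-shift bookkeeping is well taken and is in fact the only delicate point. The hypotheses of Lemma~\ref{lem_composition_is_non_zero} applied to $E(-\vx_1)$ literally read $\Hom(\Oo(\vy_i),E(-\vx_1))=k$ and $\Hom(E(-\vx_1),\Oo(\vz_j))=k$; under the shift identifications $\pc{E(-\vx_1)}=\pc{E}(-\vx_1)$ and $\ih{E(-\vx_1)}=\pc{E}$ from Theorem~\ref{Thm:2ab}, these become $\Hom(\Oo(\vy_i+\vx_1),E)=k$ and $\Hom(E,\Oo(\vz_j+\vx_1))=k$, with $\Oo(\vy_i+\vx_1)$ a summand of $\pc{E}$ and $\Oo(\vz_j+\vx_1)$ a summand of $\ih{E}$. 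The corollary's phrasing in terms of $\vy_i,\vz_j$ and $E$ is thus slightly loose, but in every application (via Lemma~\ref{observation_dim_1}) the one-dimensionality holds for \emph{all} summands simultaneously, so the index shift is harmless. Your plan is correct.
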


In the general situation the dimension of the homomorphism space between $E$ and a line bundle summand of $\pc{E}$ or $\ih{E}$ can by greater that one. We have more precise information for indecomposable vector bundles of rank $2$ and $3$:

\begin{Lem}
	\label{observation_dim_1} The following assertions hold.
\begin{itemize}
\item[(a)] Assuming $\XX$ of arbitrary weight triple, let $L$, (respectively $L'$) be a direct summand of the projective cover (respectively the injective hull) of an extension bundle $E$. Then $\HomX(L,E)=k$, (respectively $\HomX(E,L')=k$).
\item[(b)] Assuming $\XX$ of domestic weight type, let $L$, (respectively $L'$) be a direct summand of the projective cover (respectively injective hull) of an indecomposable rank $3$ bundle $E$. Then $\HomX(L,E)=k$, (respectively $\HomX(E,L')=k$).
\end{itemize}
\end{Lem}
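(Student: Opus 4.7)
The plan is to handle both (a) and (b) by a common scheme: for $L$ a direct summand of $\pc{E}$, the upper bound $\dim\HomX(L, E) \leq 1$ will follow from Hom-orthogonality of the line bundle summands of $\pc{E}$ together with distinguished exactness, while the matching lower bound $\HomX(L, E) \neq 0$ is immediate from Proposition \ref{fact}(ii). The dual assertion for $L'$ a summand of $\ih{E}$ is treated symmetrically, relying on Proposition \ref{fact}(i) and the fact that line bundles are injective in the Frobenius category $\vect{\XX}$.

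In detail, decomposing $\pc{E} = L \oplus \bigoplus_i L_i$ with pairwise Hom-orthogonal summands, one has $\HomX(L, \pc{E}) = \HomX(L, L) = k$. Applying $\HomX(L, -)$ to the distinguished-exact sequence $0 \to \ker \pi_E \to \pc{E} \to E \to 0$ yields, by the very definition of distinguished exactness, a surjection $\HomX(L, \pc{E}) \twoheadrightarrow \HomX(L, E)$, and hence $\dim\HomX(L, E) \leq 1$. The component map $L \hookrightarrow \pc{E} \stackrel{\pi_E}{\longrightarrow} E$ is a monomorphism by Proposition \ref{fact}(ii), so in particular nonzero, giving the matching lower bound; combined, $\HomX(L, E) = k$. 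For the injective hull statement, I would apply $\HomX(-, L')$ to the distinguished-exact sequence $0 \to E \to \ih{E} \to Q \to 0$, use injectivity of line bundles to obtain the surjection $\HomX(\ih{E}, L') \twoheadrightarrow \HomX(E, L')$, and then use that the component map $E \to L'$ is an epimorphism by Proposition \ref{fact}(i), hence nonzero.

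The main obstacle is verifying Hom-orthogonality of the summands of $\pc{E}$ and of $\ih{E}$. In case (a) there is nothing to do: Hom-orthogonality is built into the statement of Theorem \ref{KLM_proj_cov}. In case (b) one needs pairwise Hom-orthogonality of the six line bundle summands appearing in the projective cover (dually, injective hull) of each rank-three Auslander-Reiten orbit for the domestic weight types $(2,3,3)$, $(2,3,4)$ and $(2,3,5)$. This is asserted and used in the proofs of Propositions \ref{proj_cover_2_3_3}--\ref{proj_cover_2_3_5}, where it underpins the minimality of the constructed projective cover; a direct verification, if desired, amounts to checking that $S_{\vy' - \vy} = 0$ for each pair $\vy \neq \vy'$ of shift vectors drawn from the explicit lists in those propositions.
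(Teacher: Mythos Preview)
Your argument is correct and takes a genuinely different route from the paper's. The paper proves (a) by applying $\HomX(L,-)$ to the \emph{defining} short exact sequence $0\to \bar L(\vw)\to E\to \bar L(\vx)\to 0$ of the extension bundle, verifies $\ExtX(L,\bar L(\vw))=\DHomX(\bar L,L)=0$ for each summand $L$ of $\pc{E}$ via Serre duality, and then reads off $\dim\HomX(L,E)$ from the resulting four-term exact sequence, invoking Hom-orthogonality of the summands to identify the end terms. You instead apply $\HomX(L,-)$ to the \emph{projective-cover} sequence and appeal to distinguished exactness directly, so that Hom-orthogonality pins down $\HomX(L,\pc{E})=k$ and gives the upper bound at once; the lower bound then comes from Proposition~\ref{fact}(ii). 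Your version has the advantage of uniformity: the same scheme covers both (a) and (b) without change, whereas the paper's argument for (a) exploits the specific rank-two extension structure and leaves (b) to the reader as ``similar''. One small remark: for the lower bound you do not need the full force of Proposition~\ref{fact}(ii) (monomorphicity); nonvanishing of the component map $L\to E$ already follows from minimality of the projective cover, and this avoids any dependence on the case-by-case analysis of Section~\ref{sec_proj_covers} that underlies part (ii) of that proposition.
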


\begin{pf}
We prove statement $(a)$ the proof of statement $(b)$ is similar. Let $E$ be an extension bundle on $\XX$, thus $E$ is the middle term of an exact sequence $0\lra \bar{L}(\vw)\lra E\lra \bar{L}(\vx)\lra 0$, for some line bundle $\bar{L}$ and an element $\vx=l_1\vx_1+l_2\vx_2+l_3\vx_3$ such that $0\leq l_i\leq p_i-2$ for $i=1,2,3$. It follows from Theorem \ref{KLM_proj_cov} that each direct summand $L$ of $\pc{E}$ has one of the following shapes $\bar{L}(\vw)$ or $\bar{L}(\vx-(1+l_i\vx_i))$ for $i=1,2,3$. Therefore $\ExtX(L',\bar{L}(\vw))=\HomX(\bar{L},L')=0$. Since the direct summands of the projective cover of $E$ are $\HomX$-orthogonal, we conclude that  $\dim\HomX(L',E)$ is  equal to $\dim\HomX(L',\bar{L}(\vw))=1$ for $L=\bar{L}(\vw)$ or $\dim\HomX(L',\bar{L}(\vx))=1$ for $L\neq\bar{L}(\vw)$.
\end{pf}

As the following example shows, achieving the condition $uv=f\,\id=vu$ by specialization of a factorization frame for $E$, is not sufficient for obtaining a matrix factorization, representing $E$.

\begin{Ex} \label{expl:decompose}
Let $\XX$ be a weighted projective line of type $(2,3,4)$. We consider the almost split sequence $0\lra\tau G_2\lra E_4\lra G_2\lra 0$ from section \ref{sec_proj_covers}, Proposition \ref{proj_cover_2_3_4}. This sequence satisfies the assumptions of Lemma \ref{matrix_frame_from_sequence}. Hence the factorization frame for $E_4$ has the shape
$${U}_{E_4}=\left[\begin{array}{c|c}
\overline{u}_{\tau G_2}& \overline{b}\\
0& \overline{u}_{G_2}
\end{array}\right],
\quad
{V}_{E_4}=\left[\begin{array}{c|c}
\overline{v}_{\tau G_2}& \overline{b}\\
0& \overline{v}_{G_2}
\end{array}\right].$$
If we chose scalars, such that $b=0$, and ${u}_{\tau G_2}$, ${v}_{\tau G_2}$
(resp. $u_{G_2}$, $v_{G_2}$) are matrix factorizations for $\tau G_2$ (respectively $G_2$), then we obtain a matrix factorizations for $\tau G_2\oplus G_2$, not for $E_4$.
\end{Ex}

Checking the indecomposability of a matrix factorization, obtained by specializing matrix frames for bundles of rank two and three, the following observation will be helpful.

\begin{Ob}
Let $\XX$ has a domestic type and $E$ be an indecomposable vector bundle of rank $2$ or $3$. Then the line bundle summands of $\pc E$ are pairwise Hom-orthogonal.
\end{Ob}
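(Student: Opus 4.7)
The argument splits into a rank-two and a rank-three case. In the rank-two case, every indecomposable vector bundle is an extension bundle $\extb{L}{\vx}$ for some $\vnull\le\vx\le\vdom$, and the final clause of Theorem \ref{KLM_proj_cov} asserts explicitly that the four line-bundle summands of $\pc{\extb{L}{\vx}}$ are mutually Hom-orthogonal. Hence the rank-two case is immediate and we focus on rank three.

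In the rank-three case the plan is first to reduce to finitely many representatives and then to verify Hom-orthogonality by a direct calculation in $\LL$. Since $\pc{E(\vy)}=\pc{E}(\vy)$ for all $\vy\in\LL$, and a simultaneous degree shift on every summand preserves Hom-orthogonality (the relevant Hom-spaces depending only on differences of degrees), it suffices to treat one representative per $\tau$-orbit of indecomposable rank-three bundles. Weight type $(2,2,n)$ contains no such bundles, so only $(2,3,3)$, $(2,3,4)$ and $(2,3,5)$ remain. The relevant representatives, read off from Propositions \ref{proj_cover_2_3_3}, \ref{proj_cover_2_3_4} and \ref{proj_cover_2_3_5}, are: $E_3$ in type $(2,3,3)$; $E_3$ and $F_3=E_3(\vx_1-2\vx_3)$ in type $(2,3,4)$, where $F_3$ reduces to $E_3$ by the shift argument above; and $E_3$ together with $G_3$ in type $(2,3,5)$.

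For each remaining bundle we take the six-element list $\vy_1,\dots,\vy_6$ of line-bundle summand degrees of $\pc{E}$ from the relevant proposition. For every unordered pair $\{i,j\}$ with $i\neq j$ we need the vanishing of both $\HomX(\Oo(\vy_i),\Oo(\vy_j))=S_{\vy_j-\vy_i}$ and $\HomX(\Oo(\vy_j),\Oo(\vy_i))=S_{\vy_i-\vy_j}$. Writing any element of $\LL$ in its unique normal form $l_1\vx_1+l_2\vx_2+l_3\vx_3+l\vc$ with $0\le l_i\le p_i-1$ and $l\in\ZZ$, such an element lies in the positive cone (equivalently, has non-zero graded piece in $S$) if and only if $l\ge 0$; hence non-vanishing of $S_{\vy_j-\vy_i}$ is equivalent to the $\vc$-coefficient of the normal form of $\vy_j-\vy_i$ being non-negative, which is a purely numerical check. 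For $E_3$ of type $(2,3,3)$ this verification is already recorded in the proof of Proposition \ref{proj_cover_2_3_3}; the remaining bundles are handled by the same routine.

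The main obstacle is purely combinatorial: each bundle contributes $\binom{6}{2}=15$ pairs and each pair requires two sign-checks in $\LL$, giving thirty one-line verifications per bundle. No additional ingredient beyond the explicit lists of Section \ref{sec_proj_covers} and the normal form of $\LL$ is needed, and the result is consistent with the fact that Hom-orthogonality of the summands of $\pc{E}$ has already been used implicitly to secure minimality in the proofs of Propositions \ref{proj_cover_2_3_3}--\ref{proj_cover_2_3_5}.
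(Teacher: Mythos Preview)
Your proposal is correct and follows essentially the same approach as the paper: for rank two you both invoke the Hom-orthogonality clause of Theorem~\ref{KLM_proj_cov}, and for rank three you both reduce to a case-by-case inspection of the explicit projective covers from Propositions~\ref{proj_cover_2_3_3}--\ref{proj_cover_2_3_5}. You have simply spelled out in more detail what ``inspection'' entails (reduction to one representative per $\tau$-orbit, and the normal-form criterion for $S_{\vy_j-\vy_i}\neq 0$), whereas the paper leaves this as a one-line remark.
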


\begin{pf}
For rank two this is a general fact, see Theorem \ref{KLM_proj_cov}. For rank three, this follows by inspection of the projective covers for the domestic weight triples.
\end{pf}

\section{Matrix factorizations} \label{sect:matrix.factorizations}
\emph{During the whole section, we will freely switch from the notation $(x_1,x_2,x_3)$ to $(x,y,z)$, whenever this is preferable for typographical reasons.}

This section presents explicit matrix factorizations for the following cases. Keeping the assumption to deal with a weight triple $(a,b,c)$ of integers greater or equal $2$, we first present a general result on the matrix factorizations of the $\LL$-graded triangle singularity $f=x^a+y^b+z^c$ for indecomposable bundles of rank two.
Next, we restrict to weight triple $(2,a,b)$, where we obtain \emph{symmetric} matrix factorizations for indecomposable vector bundles of rank two.

For the second part of the section we restrict to weight triples of domestic case, that is, to the weight triples $(2,2,n)$, $(2,3,3)$, $(2,3,4)$, and $(2,3,5)$. Here, we determine \emph{symmetric} matrix factorizations for each indecomposable vector bundle (equivalently, each $\LL$-graded indecomposable Cohen-Macaulay module) of rank $\geq 2$, where for rank $2$ we use the general result for extension bundles. We emphasize that our methods work over any characteristic, yielding matrices whose entries are scalar multiples of monomials with small exponents, where the scalars are taken from $\{0,\pm 1\}$.
Still restricting to domestic weight types, we note that the concepts of simple singularities and triangle singularities, as studied in \cite{KST-1}, \cite{Lenzing:Pena:2011}, respectively \cite{Kussin:Lenzing:Meltzer:2013adv}, agree exactly for weight type $(2,3,5)$. Due to the different approaches, the resulting matrix factorizations for $f=x^2+y^3+z^5$ are different.

\subsection*{General results} \label{ssect:extension.bundles}

We recall that the group $\LL$ acts on $\coh\XX$, and related mathematical objects like $T=k[x_1,x_2,x_3]$, $S=T/(f)$, $\CMgr\LL{S}$ and $\vect\XX$, by degree shift. The next observation largely simplifies the determination of explicit matrix factorizations.

\begin{Lem}\label{lem:degree.shift} We assume that $\XX$ has triple weight type.
Let $E$ be a vector bundle, admitting a matrix factorization $(u,v)$. Then for each $\vx$ in $\LL$, also $E(\vx)$ admits the matrix factorization $(u,v)$. In particular, all members of a $\tau$-orbit in $\vect\XX$ admit matrix factorizations by the same matrices.
\end{Lem}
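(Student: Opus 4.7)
The plan is to exploit the fact that degree shift by $\vx\in\LL$ is an exact autoequivalence of the category $\modgr\LL{T}$ of $\LL$-graded $T$-modules that sends $\LL$-graded projectives to $\LL$-graded projectives, and, crucially, that it does not alter the matrix of a homogeneous $T$-linear map between free modules. Indeed, if we represent $\phi:P_1\to P_0$ by a matrix whose entries are homogeneous elements of $T$ (with respect to fixed line bundle decompositions of $P_0,P_1$), then the shifted map $\phi(\vx):P_1(\vx)\to P_0(\vx)$ is represented by the same matrix, viewed with respect to the induced decompositions, since degree shift only reinterprets the degrees of the basis elements, not the scalars.

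Given the matrix factorization $P_1\mf{\phi}{\psi}P_0$ representing $E$, we apply the shift functor $(-)(\vx)$ to obtain the pair
$$P_1(\vx)\mf{\phi(\vx)}{\psi(\vx)}P_0(\vx).$$
The defining relations $\phi\,\psi=f\,\id=\psi\,\phi$ continue to hold after shifting, because degree shift commutes with composition and multiplication by the (homogeneous) element $f$. Thus $(\phi(\vx),\psi(\vx))$ is again a matrix factorization of $f$, and as just observed, it is given by the very same matrix pair $(u,v)$.

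To identify the associated vector bundle, we note that $\mathrm{cok}(\phi(\vx))=\mathrm{cok}(\phi)(\vx)$ in $\modgr\LL{S}$, and sheafification is compatible with degree shift, so the bundle represented by $(\phi(\vx),\psi(\vx))$ is $E(\vx)$, as required. Since $\tauX E=E(\vom)$, iteration of the shift by $\vom$ sweeps out the whole $\tau$-orbit of $E$ in $\vect\XX$, and the final assertion follows.

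No genuine obstacle is expected; the only point to be careful about is the bookkeeping of the line bundle decompositions of $P_0,P_1$ when identifying the shifted matrix with the original one, but this is precisely the compatibility convention adopted for matrix factorizations in Section~\ref{sect:basic.concepts}.
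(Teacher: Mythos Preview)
Your proof is correct and follows essentially the same approach as the paper's: both apply the degree shift functor to the matrix factorization (equivalently, to the periodic projective resolution) and observe that the matrices $u,v$ are unchanged because shifting only relabels the degrees of the free generators. Your version is simply more explicit about why the matrix entries are preserved and why the cokernel identifies with $E(\vx)$.
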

\begin{proof}
If $M$ is the $\LL$-graded Cohen-Macaulay $S$-module corresponding to $E$, and $P_0\up{u}P_1\up{v}P_0\to M \to 0$ is the start of a $2$-periodic minimal projective $A$-resolution for $M$, then application of the degree shift with $\vx$ yields the start $P_0(\vx)\up{u}P_1(\vx)\up{v}P_0(\vx)\to M(\vx) \to 0$ of a $2$-periodic minimal projective $A$-resolution for $M(\vx)$, just keeping the matrices $u$ and $v$.
\end{proof}

Assume that $\XX$ is a weighted projective line of triple weight type $(a,b,c)$, not necessarily domestic.
We recall from \cite[Theorem~4.2]{Kussin:Lenzing:Meltzer:2013adv} that each indecomposable vector bundle $E$ of rank two is an extension bundle, that is, it is the middle term $\extb{L}{\vx}$ of `the' non-split exact sequence $0\to L(\vom)\to E\to L(\vx)\to 0$ for some line bundle $L$ and some $\vnull\leq\vx\leq\vdom$, where $\vdom=\vc+2\vw$.  By Lemma~\ref{lem:degree.shift} we obtain matrix factorizations $(u_\vx,v_\vx)$ for $\extb{L}{\vx}$ where the matrices $u_\vx$ and $v_\vx$ are independent of $L$. To see this, one uses formula \eqref{eqn:ext_bundle:shift}. In the following, we are going to construct matrix factorizations for many vector bundles $E$ (or Cohen-Macaulay modules $M$). In order to describe such a matrix factorization uniquely, we list the projective cover $\pc{E}$ of $E$ (or $M$) together with the matrix pair $(u,v)$ of the factorization. Representing $\pc{E}$ as a direct sum of line bundles $\Oo(\vy_j)$, $j=1,\ldots,s$, then the triple notation $(u,v,\pc{E})$ or, equivalently, $(u,v,(\vy_j))$ determines $E$ (or $M$) uniquely, up to isomorphism.

\begin{Prop} \label{prop:mf:abc:rk2}
Let $\XX$ be a weighted projective line of triple weight type $(a,b,c)$. Then the extension bundle  $E=\extb{L}{\vx}$, where $\vx=\sum_{i=1}^3 l_i\vx_i$ and $\vnull\leq \vx\leq \vdom$, admits the matrix factorization $(u_\vx,v_\vx, (\vw, \vx-(1+l_i\vx_i))_{i=1,2,3} )$,
where
$$u_{\vx}=\left[\begin{array}{cccc}
0& x^{(1+l_1)}& y^{(1+l_2)}& z^{(1+l_3)}\\
x^{(1+l_1)}&0&z^{c-(1+l_3)}&-y^{b-(1+l_2)}\\
y^{(1+l_2)}&-z^{c-(1+l_3)}&0&x^{a-(1+l_1)}\\
z^{(1+l_3)}&y^{b-(1+l_2)} &-x^{a-(1+l_1)}&0
\end{array}\right] $$
$$v_{\vx}=\left[\begin{array}{cccc}
0& x^{a-(1+l_1)}& y^{b-(1+l_2)}& z^{c-(1+l_3)}\\
x^{a-(1+l_1)}&0&-z^{(1+l_3)}&y^{(1+l_2)}\\
y^{b-(1+l_2)}&z^{(1+l_3)}&0&-x^{(1+l_1)}\\
z^{c-(1+l_3)}&-y^{(1+l_2)} &x^{(1+l_1)}&0
\end{array}\right]. $$
\end{Prop}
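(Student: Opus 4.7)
The plan is: reduce to $L=\Oo$, verify the matrix identity $u_\vx v_\vx = v_\vx u_\vx = f\,\id$ by a direct Koszul-type calculation, and then use Proposition~\ref{prop:determined_by_cover:abc} to identify the cokernel with $E$.

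First, by Lemma~\ref{lem:degree.shift} together with identity \eqref{eqn:ext_bundle:shift}, the matrices $u_\vx, v_\vx$ do not depend on the line bundle $L$; only the $\LL$-grading on the ambient free $T$-modules is shifted. So I may assume $L=\Oo$. Introduce the abbreviations $a_i = x_i^{1+l_i}$ and $b_i = x_i^{p_i-(1+l_i)}$, where $(p_1,p_2,p_3)=(a,b,c)$. Then $a_i b_i = x_i^{p_i}$ and $f = a_1 b_1 + a_2 b_2 + a_3 b_3$; the hypothesis $\vnull \leq \vx \leq \vdom$ guarantees that both $a_i$ and $b_i$ are non-constant. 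In these abbreviations $u_\vx$ has first row/column $(0, a_1, a_2, a_3)$ and a sign-twisted antisymmetric $3\times 3$ block formed from the $\pm b_i$, while $v_\vx$ is obtained from $u_\vx$ by swapping $a_i \leftrightarrow b_i$ with a matching sign rearrangement.

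The main computation is $u_\vx v_\vx = f\,\id = v_\vx u_\vx$. Each of the four diagonal entries of $u_\vx v_\vx$ collapses to $\sum_{i=1}^3 a_i b_i = f$, while each of the twelve off-diagonal entries is a sum of two terms of one of the forms $\pm a_i a_j \mp a_i a_j$, $\pm b_i b_j \mp b_i b_j$, or $\pm a_i b_j \mp a_i b_j$, cancelling by commutativity of $T$ together with the sign pattern built into the matrices. The same verification applies to $v_\vx u_\vx$.

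Next I promote $(u_\vx, v_\vx)$ to an $\LL$-graded matrix factorization by prescribing
$$P_0 \;=\; T(\vw) \,\oplus\, \bigoplus_{i=1}^3 T(\vx - (1+l_i)\vx_i),$$
matching $\pc E$ from Theorem~\ref{KLM_proj_cov}, and defining the grading on $P_1$ so that the monomial entries of $u_\vx$ become homogeneous of the correct degree, as licensed by Theorem~\ref{thm:hom_dimensions} (which pins down the degrees column by column; a parallel check for $v_\vx$ confirms consistency). It remains to identify $M := \cok \bar u_\vx$ with $E$. By Eisenbud's theorem, $M$ is an $\LL$-graded maximal Cohen-Macaulay module of $S$-rank two. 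All entries of $u_\vx$ lie in the graded maximal ideal $(x_1,x_2,x_3)$, so the factorization is reduced and $M$ has no projective summand; equivalently $\tilde M \in \vect\XX$ has no line-bundle summand. A non-zero rank-two vector bundle without line-bundle summand is necessarily indecomposable, hence an extension bundle by the classification of indecomposable rank-two bundles. Since $\pc{\tilde M} = P_0 = \pc E$ by construction, Proposition~\ref{prop:determined_by_cover:abc} forces $\tilde M \cong E$, completing the proof.

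The principal obstacle is the degree bookkeeping: with four summands on each side there are sixteen degree conditions to verify consistently across $u_\vx$ and $v_\vx$, and the two matrices are coupled by a shift by $\vc$. A close second is the sign discipline in the antisymmetric $3\times 3$ block, where any sign error in the $\pm b_i$ (or $\pm a_i$) pattern would produce residual non-zero off-diagonal terms rather than the desired cancellations.
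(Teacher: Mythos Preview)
Your proof is correct and follows the same overall arc as the paper's: verify $u_\vx v_\vx = f\cdot\id$, establish that the resulting Cohen--Macaulay module is indecomposable, and then invoke Proposition~\ref{prop:determined_by_cover:abc} to match projective covers. The main difference lies in the indecomposability step. The paper first sets up the factorization frame and uses Corollary~\ref{cor_matrix_frame_dim_1} together with Lemma~\ref{observation_dim_1} to argue that any specialization representing $E$ must have all scalars non-zero, and then checks indecomposability of $(u_\vx,v_\vx)$ by computing its endomorphism ring, exploiting the Hom-orthogonality of the four line bundle summands of $\pc{E}$ (Theorem~\ref{KLM_proj_cov}). Your argument --- that a rank-two bundle with no line bundle summand is automatically indecomposable --- is more direct for this particular rank and bypasses the frame machinery entirely; the paper's route, on the other hand, generalizes more readily to the higher-rank cases treated later.

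One point you should tighten: the assertion that $M$ has $S$-rank two is not a consequence of Eisenbud's theorem as stated in the paper. A clean justification is that over the function field of $\XX$ one has $\mathrm{rk}\,\bar u + \mathrm{rk}\,\bar v = 4$ by exactness of the periodic resolution, while each of $\bar u$ and $\bar v$ visibly has a $2\times 2$ minor not divisible by $f$ (for instance $-a_1^{2}$ and $-b_1^{2}$, respectively), forcing $\mathrm{rk}\,\bar u = \mathrm{rk}\,\bar v = 2$ and hence $\mathrm{rk}\,M = 2$. The paper's proof glosses over this same point.
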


\begin{pf}
From Theorem \ref{KLM_proj_cov}, the projective cover of the extension bundle $\extb{L}{\vx}$ is given by
$\pc{\extb{L}{\vx}}= L(\vw)\oplus \bigoplus_{i=1}^3 L(\vx-(1+l_i)\vx_i).$

The corresponding factorization frame for $\extb{L}{\vx}$ has the shape
$$U_{\vx}=\left[\begin{array}{cccc}
0& x^{(1+l_1)}& y^{(1+l_2)}& z^{(1+l_3)}\\
x^{(1+l_1)}&0&z^{c-(1+l_3)}&y^{b-(1+l_2)}\\
y^{(1+l_2)}&z^{c-(1+l_3)}&0&x^{a-(1+l_1)}\\
z^{(1+l_3)}&y^{b-(1+l_2)} &x^{a-(1+l_1)}&0
\end{array}\right] $$
$$V_{\vx}=\left[\begin{array}{cccc}
0& x^{a-(1+l_1)}& y^{b-(1+l_2)}& z^{c-(1+l_3)}\\
x^{a-(1+l_1)}&0&z^{(1+l_3)}&y^{(1+l_2)}\\
y^{b-(1+l_2)}&z^{(1+l_3)}&0&x^{(1+l_1)}\\
z^{c-(1+l_3)}&y^{(1+l_2)} &x^{(1+l_1)}&0
\end{array}\right].$$
From  Corollary \ref{cor_matrix_frame_dim_1} and Observation  \ref{observation_dim_1} we need to chose non-zero scalars, such that $u_{\vx}v_{\vx}=f\,\id=v_{\vx}u_{\vx}$ and $(u_{\vx},v_{\vx})$ is indecomposable.
By Theorem \ref{KLM_proj_cov} the line bundle summands of the projective cover of $\extb{L}{\vx}$  are Hom-orthogonal, it is then easy to check that the above choice of scalars yields an indecomposable matrix factorization. That we get, indeed, a matrix factorization attached to $E$, then follows from Proposition \ref{prop:determined_by_cover}.
\end{pf}

We next assume weight type $(2,a,b)$ and show that each extension bundle $E=\extb{L}{\vx}$ admits a  \emph{symmetric} matrix factorization.  We recall from Theorem \ref{Thm:2ab}, that the suspension functor $[1]$ for $\svect{\XX}$ is induced by the degree shift $E\mapsto E(\vx_1)$. The minimal projective resolution of $E$ has the form
\begin{equation} \label{eqn:resolution:symmetric}
\pc{E}(-\vc)\stackrel{\bar{v}=\bar{u}(-\vx_1)}{\lra} \pc{E}(-\vx_1)\stackrel{\bar{u}}{\lra}\pc{E}\lra E\lra 0.
\end{equation}
Interpreting the above sequence on the level of $S$-modules, this shows already that the symmetry condition $\bar{v}=\bar{u}(-\vx_1)$ is satisfied over $S=T/(f)$. It remains to lift the maps $\bar{u}$ and $\bar{v}$ to $T$-linear maps $u$ and $v$ such that $v=u(-\vx_1)$ holds and, moreover, the matrix factorization identity $u(-\vx_1)u=f\,\id$ holds.

\begin{Prop} \label{prop:mf:2ab:rk2}
	\label{Thm:sym_mat_fact_for_extension_bundle}
Let $\XX$ be a weighted projective line of triple weight type $(2,a,b)$. Then each extension bundle $E=\extb{L}{\vx},$ where $\vx=l_2\vx_2+l_3\vx_3$, and $0\leq l_2\leq a-2$, $0\leq l_3\leq b-2$, admits a symmetric matrix factorization
$(u_{\vx},v_{\vx}, (\vw, \vx-\vx_1,l_3\vx_3-\vx_2, l_2\vx_2-\vx_3))$, of $f=x^2+y^a+z^b$, where

$$u_\vx=v_\vx=\left[\begin{array}{cccc}
x&0&-z^{b-l_3-1}&y^{a-l_2-1}\\
0& x& y^{l_2+1} & z^{l_3+1}\\
-z^{l_3+1}& y^{a-l_2-1}& -x&0\\
y^{l_2+1}& z^{b-l_3-1}& 0&-x
\end{array} \right].$$ \hfill\qed
\end{Prop}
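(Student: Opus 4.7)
The plan rests on three verifications that can be carried out independently.

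First, one checks that the displayed matrix is a legitimate candidate, i.e.\ that each entry is a homogeneous element of the correct degree. By Theorem \ref{KLM_proj_cov}, the projective cover $P:=\pc{\extb{L}{\vx}}$ decomposes into the four Hom-orthogonal line bundles $L(\vw)$, $L(\vx-\vx_1)$, $L(l_3\vx_3-\vx_2)$, $L(l_2\vx_2-\vx_3)$. Because $2\vx_1=\vc$ and the suspension in $\svect\XX$ is the degree shift by $\vx_1$ (Theorem \ref{Thm:2ab}), a symmetric matrix factorization amounts to a single homogeneous map $u:P\to P(\vx_1)$ satisfying $u\cdot u=f\cdot\id$; its $(i,j)$-entry must be homogeneous of degree $\vy_i+\vx_1-\vy_j$, where $\vy_1,\ldots,\vy_4$ list the degrees of the summands of $P$. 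A direct case check, using $a\vx_2=b\vx_3=2\vx_1=\vc$, confirms that every entry of $u_\vx$ has the required degree, while the zero positions correspond to vanishing Hom-spaces via Theorem \ref{thm:hom_dimensions}.

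Second, one verifies the identity $u_\vx\cdot u_\vx=f\cdot\id$. The most efficient way is to exhibit the block decomposition
\[
u_\vx=\left[\begin{array}{cc} xI_2 & B\\ C & -xI_2\end{array}\right],\quad B=\left[\begin{array}{cc}-z^{b-l_3-1}& y^{a-l_2-1}\\ y^{l_2+1}&z^{l_3+1}\end{array}\right],\quad C=\left[\begin{array}{cc}-z^{l_3+1}& y^{a-l_2-1}\\ y^{l_2+1}&z^{b-l_3-1}\end{array}\right].
\]
Then the two off-diagonal blocks of $u_\vx^2$ are $xB-xB$ and $xC-xC$, which vanish automatically, while the diagonal blocks are $x^2I_2+BC$ and $x^2I_2+CB$. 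A short calculation then gives $BC=CB=(y^a+z^b)I_2$; the signs chosen in $B$ and $C$ are precisely what makes the off-diagonal entries of both products cancel. Hence $u_\vx^2=(x^2+y^a+z^b)I_4=f\cdot\id$.

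Third, one identifies the cokernel. Since every entry of $u_\vx$ lies in the graded maximal ideal of $T$, the reduction modulo $f$ is a reduced $S$-matrix factorization whose cokernel $M$ is a graded Cohen-Macaulay module with no projective summand and with projective cover equal to $P$. Viewed as a vector bundle via sheafification, $M$ has rank two, and Proposition \ref{prop:determined_by_cover:abc} forces $M\cong\extb{L}{\vx}$. Indecomposability of the matrix factorization itself then follows from the Hom-orthogonality of the summands of $P$ via Eisenbud's equivalence.

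The hardest part is the sign bookkeeping inside $B$ and $C$; once the block form is isolated, all sixteen entry-level verifications collapse to the two $2\times 2$ identities $BC=(y^a+z^b)I_2=CB$, each requiring only four monomial products.
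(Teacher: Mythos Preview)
Your argument is correct and follows essentially the same route as the paper: verify $u_\vx^2=f\cdot\id$, establish indecomposability from the Hom-orthogonality of the four line bundle summands of $P$, and then identify the cokernel with $\extb{L}{\vx}$ via Proposition~\ref{prop:determined_by_cover:abc}. The $2\times 2$ block decomposition you use to collapse the sixteen entry checks to the identity $BC=CB=(y^a+z^b)I_2$ is a tidy addition the paper does not spell out.

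One point of presentation needs repair. In your third step you invoke Proposition~\ref{prop:determined_by_cover:abc} before indecomposability has been established, but that proposition only compares \emph{indecomposable} rank-two bundles; as written, nothing yet excludes that $M$ decomposes. The fix is simply to reorder: first run the endomorphism computation (Hom-orthogonality forces any endomorphism $(F_0,F_1)$ of the factorization to be a pair of diagonal matrices, and the nonzero pattern of $u_\vx$ then forces all diagonal entries to coincide), conclude that $M$ is indecomposable without projective summands, deduce $\rk M=2$ from $\rk P=4$ via Theorem~\ref{Thm:2ab}, and only then apply Proposition~\ref{prop:determined_by_cover:abc}. This is exactly the order the paper uses.
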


\begin{pf}
We fix a decomposition of $\pc{E}$ into line bundles, that is transferred to the other terms by degree shifts with $-\vx_1$ and $-\vc$. We then obtain a matrix factorization $(u_\vx,v_\vx)$, such that $u_\vx$ and $v_\vx=u_\vx(-\vx_1)$ are represented by the same matrix. Defining the above specialization $u_\vx$ of the factorization frame for $E$, we satisfy the matrix factorization condition $u_\vx^2 =f\,\id$ over $T$. Using similar arguments as before, it is again easy to check that this matrix factorization has a trivial endomorphism algebra. Hence it is indecomposable, and then by Proposition \ref{prop:determined_by_cover:abc} it represents $E$.
\end{pf}

\subsection*{The domestic case}

The main result of this section, and actually the main result of this paper, concerns the domestic case, necessarily of type $(2,a,b)$, where, for each indecomposable vector bundles of rank at least two, we determine explicitly a \emph{symmetric} matrix factorization.

\begin{Thm} \label{thm:main:domestic}
We assume a triangle singularity $f=x^2+y^a+z^b$ of domestic type. For each indecomposable bundle $E$ of rank at least two, we obtain a symmetric matrix factorization $u^2=f\,\id$ of $f$ representing $E$. The matrix entries of $u$ are scalar multiples of monomials in $x,y,z$ with small exponents. Moreover,  the scalars may be taken from $\{0,\pm1\}$.
\end{Thm}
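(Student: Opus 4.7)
The plan is to reduce to finitely many cases by exploiting the $\LL$-action, and then build the matrix factorizations by induction on rank, following the inductive pattern already used in Section~\ref{sec_proj_covers} for projective covers. By Lemma~\ref{lem:degree.shift}, a matrix factorization of $E$ yields a matrix factorization of $E(\vx)$ for every $\vx \in \LL$ via the same matrices, and symmetry is preserved under such degree shifts. Combined with Propositions~\ref{proj_cover_2_2_n}--\ref{proj_cover_2_3_5}, it therefore suffices to produce a symmetric matrix factorization for the distinguished representative selected from each $\tau$-orbit of indecomposable vector bundles of rank $\geq 2$.

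The base case is rank~$2$: every such bundle is an extension bundle, and Proposition~\ref{prop:mf:2ab:rk2} already provides an explicit symmetric matrix factorization $u_{\vx}^2 = f\,\id$ whose entries are scalar multiples of small-exponent monomials with scalars in $\{0, \pm 1\}$. This settles weight type $(2,2,n)$ entirely, together with all rank-$2$ representatives in the remaining three domestic types. For rank $\geq 3$, I would induct on rank, using for each representative $E$ a distinguished exact sequence
\[
0 \lra X \lra E \lra Y \lra 0
\]
supplied by Section~\ref{sec_proj_covers} (the same sequences used to compute $\pc{E}$), with $X$ and $Y$ of strictly smaller rank; the isolated rank-$3$ representatives in types $(2,3,3)$ and $(2,3,4)$, whose $\pc{E}$ was built by a direct argument, would be handled by an analogous direct construction. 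Given symmetric matrix factorizations $u_X, u_Y$ by induction, Lemma~\ref{matrix_frame_from_sequence} yields a factorization frame for $E$ in block-triangular shape
\[
U_E = \begin{bmatrix} U_X & C \\ 0 & U_Y \end{bmatrix}, \qquad V_E = U_E(-\vx_1),
\]
whose off-diagonal block $C$ is, by Corollary~\ref{cor:same_matrices}, prescribed up to scalar as a matrix of small-exponent monomials.

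The main obstacle is this last step: choosing scalar coefficients from $\{0, \pm 1\}$ for the entries of $C$ so that simultaneously (i) the matrix equation $U_E^2 = f\,\id$ holds, which on the off-diagonal block reduces to $U_X\,C + C\,U_Y = 0$, (ii) the symmetry constraint $V_E = U_E(-\vx_1)$ is respected by $C$, and (iii) those entries which must remain non-zero by Corollary~\ref{cor_matrix_frame_dim_1}, combined with the rank-one Hom-space identifications from Lemma~\ref{observation_dim_1}, indeed do. The Hom-orthogonality of the line-bundle summands of $\pc{E}$ for $\rk E \leq 5$ (Proposition~\ref{prop:multiplicity:line_bundle_summands}) keeps this linear system sparse, and I expect it to reduce to a consistent sign choice on the monomial entries. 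The exceptional rank-$6$ $\tau$-orbit in type $(2,3,5)$ contains a repeated line-bundle summand and requires a more delicate analysis; here I would exploit the flexibility noted in the remark after Proposition~\ref{proj_cover_2_3_5}, picking between the two distinguished exact sequences available for $E_6$ so as to obtain a clean $\{0, \pm 1\}$-specialization.

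Finally, indecomposability of the constructed $u_E$ follows once the required non-zero entries are in place: Hom-orthogonality of the line-bundle summands of $\pc{E}$ confines any endomorphism of the frame to scalar action on each summand, and the block-triangular form together with the non-vanishing of the obligatory entries of $C$ excludes non-trivial idempotents. Since the specialized matrix factorization has, by construction, cokernel with projective cover $\pc{E}$, Proposition~\ref{prop:determined_by_cover} then identifies this cokernel with $E$ itself, completing the proof.
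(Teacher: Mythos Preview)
Your approach is essentially the same as the paper's: reduce to one representative per $\tau$-orbit via Lemma~\ref{lem:degree.shift}, settle rank two by Proposition~\ref{Thm:sym_mat_fact_for_extension_bundle}, and for higher rank use the block-triangular shape coming from Lemma~\ref{matrix_frame_from_sequence} together with the distinguished exact sequences of Section~\ref{sec_proj_covers}, finishing with Proposition~\ref{prop:determined_by_cover}. Two points deserve correction, however.

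First, you treat symmetry $v=u(-\vx_1)$ as an extra constraint to be imposed on the off-diagonal block $C$. The paper instead observes that symmetry is \emph{automatic}: over $S$ the minimal projective resolution already satisfies $\bar v=\bar u(-\vx_1)$, and Theorem~\ref{thm:hom_dimensionsI} says that in the domestic case the reduction map $T_{\vy_0-\vy_1}\to S_{\vy_0-\vy_1}$ is an isomorphism for all relevant pairs $(\vy_0,\vy_1)$, so any lift $(u,v)$ to $T$ must satisfy $v=u(-\vx_1)$. This also yields the monomial form of the entries without further work; only the $\{0,\pm1\}$ claim then needs the case-by-case verification.

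Second, you twice invoke ``Hom-orthogonality of the line-bundle summands of $\pc{E}$ for $\rk E\le 5$'' and cite Proposition~\ref{prop:multiplicity:line_bundle_summands}. That proposition only gives \emph{multiplicity-freeness}; Hom-orthogonality holds merely for rank $2$ and $3$ (Theorem~\ref{KLM_proj_cov} and the Observation preceding Section~\ref{sect:matrix.factorizations}). Consequently your indecomposability argument, which assumes every endomorphism is diagonal on line-bundle summands, is not valid as stated for rank $\ge 4$. The paper's proof of Proposition~\ref{prop:mf:234} shows what is actually needed: one writes a general endomorphism $(K,H)$ of $(u_E,v_E)$ with the non-zero off-diagonal entries allowed by the genuine Hom-spaces between summands of $\pc{E}$, then uses the block-triangular equation $Hu_E=u_EK$ to force the off-diagonal part to vanish and the diagonal blocks to be scalar via the inductive hypothesis. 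This is a finite computation in each case, but it is not the one-line argument you sketch.
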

\begin{pf}
Interpreting the minimal projective resolution \eqref{eqn:resolution:symmetric} as a sequence of $\LL$-graded Cohen-Macaulay modules over $S$, we lift it to a matrix factorization of $f$ over $T$, thus obtaining a commutative diagram as follows.
\begin{equation} \label{eqn:matrixfact:projres}
\xymatrix{
\cdots\ar[r]& P_0(-\vc)\ar[r]^{v}\ar[d]_\nu & P_0(-\vx_1)\ar[d]_\nu \ar[r]^{u} & P_0\ar[d]_\nu \ar[r]^\pi& M\ar@{=}[d] \ar[r]& 0 \\
\cdots\ar[r]& \bar{P}_0(-\vc)\ar[r]^{\bar{v}=\bar{u}(-\vx_1)} & \bar{P}_0(-\vx_1) \ar[r]^{\bar{u}} & \bar{P}_0 \ar[r]^{\bar{\pi}}& M \ar[r]& 0 \\}
\end{equation}
First we are going to show that the matrix factorization $(u,v)$ is symmetric, that is that $v=u(-\vx_1)$ holds. Now, reduction modulo $(f)$ sends $v$ and $u(-\vx_1)$ to the same map $\bar{v}$. Since $\XX$ has domestic type, and invoking compatible decompositions of the $P_i$ and $\bar{P}_i$ into indecomposable projectives, Theorem \ref{thm:hom_dimensions} shows that $v=u(-\vx_1)$. Moreover, by the same theorem, the matrix entries (with respect to the chosen decomposition) are scalar multiples of monomials with small exponents.

That the scalars, actually, can be chosen among $0$ and $\pm1$ follows through a case-by-case analysis from the following Propositions.
\end{pf}

For the rest of this section, we derive explicit matrix factorizations for all indecomposable vector bundles of rank at least two. We recall, see Lemma \ref{lem:degree.shift}, that a single matrix factorization is sufficient to represent all members of a fixed Auslander-Reiten orbit.

\subsubsection*{The triangle singularity $x^2+y^2+z^n$ $(n\geq 2)$.} Here, the projective covers are given by Proposition \ref{proj_cover_2_2_n}. Since for type $(2,2,n)$ all indecomposable vector bundles, that are not line bundles, have rank two, the corresponding matrix factorizations are a special case of Proposition \ref{Thm:sym_mat_fact_for_extension_bundle}.
\begin{Prop} \label{prop:mf:22n}
Let $E_i$ denote the extension bundle $\extb{L}{i\vx_3}$ for $i=0,1,\ldots, n-2$. Then $E_i$ yields a symmetric matrix factorization $(u_{E_i}, v_{E_i}, \pc{E_i})$ of $x^2+y^2+z^n$  as follows:
$$u_{E_i}= v_{E_i}=
\left[\begin{array}{cccc}
x& 0 & -z^{n-i-1} & y\\
0 & x& y & z^{i+1} \\
-z^{i+1} & y & -x & 0\\
y & z^{n-i-1} & 0 & -x
\end{array}\right],$$
for $i=0,1,\ldots, n-2$.\hfill\qed
\end{Prop}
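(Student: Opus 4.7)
The plan is to observe that Proposition \ref{prop:mf:22n} is a direct specialization of the general result for extension bundles in weight type $(2,a,b)$, namely Proposition \ref{prop:mf:2ab:rk2} (alias Theorem \ref{Thm:sym_mat_fact_for_extension_bundle}). Concretely, in weight type $(2,2,n)$ every indecomposable vector bundle of rank at least two has rank exactly two, and by Proposition \ref{proj_cover_2_2_n} each such bundle lies in the $\tau$-orbit of an extension bundle of the form $\extb{L}{i\vx_3}$ with $0\leq i\leq n-2$. By Lemma \ref{lem:degree.shift} the matrix pair describing a matrix factorization is constant along $\tau$-orbits, so it suffices to exhibit a symmetric matrix factorization for the particular representative $E_i=\extb{L}{i\vx_3}$.

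I would then apply Proposition \ref{prop:mf:2ab:rk2} with the parameters $a=2$, $b=n$ and $\vx = l_2\vx_2+l_3\vx_3$, where $l_2=0$ and $l_3=i$. The substitutions $a-l_2-1=1$, $l_2+1=1$, $l_3+1=i+1$, $b-l_3-1=n-i-1$ turn the generic matrix $u_\vx=v_\vx$ of Proposition \ref{prop:mf:2ab:rk2} directly into the matrix displayed in the statement of Proposition \ref{prop:mf:22n}. Likewise, the list of summands of the projective cover given in Proposition \ref{prop:mf:2ab:rk2} specializes, with $l_2=0$ and $l_3=i$, to
\[
\Oo(\vom)\oplus \Oo(i\vx_3-\vx_1)\oplus \Oo(i\vx_3-\vx_2)\oplus \Oo(-\vx_3),
\]
which agrees with the projective cover $\pc{E_i}$ computed in Proposition \ref{proj_cover_2_2_n}.

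The only thing to verify by hand is that the specialization truly represents $E_i$, but this is already guaranteed by the conclusion of Proposition \ref{prop:mf:2ab:rk2} (the symmetric lift satisfies $u_\vx^2=f\,\id$, has trivial endomorphism algebra, and hence represents the unique indecomposable bundle with the prescribed projective cover, thanks to Proposition \ref{prop:determined_by_cover:abc}). I would conclude by invoking Lemma \ref{lem:degree.shift} once more to transfer the factorization from the chosen representative $E_i$ to any other member of its $\tau$-orbit; this covers every indecomposable bundle of rank $\geq2$ by Proposition \ref{proj_cover_2_2_n}.

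There is essentially no obstacle of substance here: the proof is a bookkeeping exercise matching the indices in the generic formula with the data of the weight triple $(2,2,n)$. The only mild subtlety is to double-check the sign conventions and the order of line-bundle summands used to label the rows and columns of $u_{E_i}$, so that the symmetric matrix displayed is literally what one obtains from Proposition \ref{prop:mf:2ab:rk2} under the substitution above.
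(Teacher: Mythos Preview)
Your proposal is correct and matches the paper's approach exactly: the paper states that, since for type $(2,2,n)$ every indecomposable non-line bundle has rank two, Proposition~\ref{prop:mf:22n} is simply the special case $a=2$, $b=n$, $l_2=0$, $l_3=i$ of Proposition~\ref{Thm:sym_mat_fact_for_extension_bundle}, and leaves it at that. Your additional remarks about Lemma~\ref{lem:degree.shift} and the projective cover from Proposition~\ref{proj_cover_2_2_n} are accurate elaborations of this same reduction.
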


\subsubsection*{The triangle singularity $x^2+y^3+z^3$} We will use the projective covers described in Section \ref{sec_proj_covers}, Proposition \ref{proj_cover_2_3_3} and use the notations from the Auslander-Reiten quiver of type $(2,3,3)$ depicted there.

\begin{Prop} \label{prop:mf:233}
For the singularity $x^2+y^3+z^3$ we obtain the following symmetric matrix factorizations
$\left(u_2,v_2, \pc{E_2}\right)$,
$\left(u_2,v_2,\pc{F_2}\right)$,
$\left(u_2,v_2,\pc{G_2}\right)$,
$\left(u_3,v_3,\pc{E_3}\right)$,
where
$u_2=v_2=\left[\begin{array}{cccc}
x&0&-z^{2}&y^2\\
0& x& y & z\\
-z& y^2& -x&0\\
y& z^2& 0&-x
\end{array} \right],$
and
$u_3=v_3= \left[\begin{array}{cccccc}
x & yz & 0 & y^{2} & 0 & -z^{2} \\
0 & -x & z & 0 & -y & 0 \\
0 & z^{2} & x & yz & 0 & y^{2} \\
y & 0 & 0 & -x & z & 0 \\
0 & -y^{2} & 0 & z^{2} & x & yz \\
-z & 0 & y & 0 & 0 & -x%
\end{array} \right].$
\end{Prop}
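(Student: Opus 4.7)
The proof decomposes naturally by rank. For the rank-two bundles $E_2$, $F_2$, and $G_2$, each is an extension bundle with $\vx = \vnull$: from the argument in the proof of Proposition~\ref{proj_cover_2_3_3}, the almost-split sequences $0 \to \Oo(i\vu) \to H_i \to \Oo(i\vu - \vw) \to 0$ with $\vu = \vx_2-\vx_3$ identify $H_0 = E_2$, $H_1 = F_2$, $H_2 = G_2$ with the extension bundles $\extb{\Oo(i\vu-\vw)}{\vnull}$. Applying Proposition~\ref{prop:mf:2ab:rk2} to weight type $(2,3,3)$ (so $a=b=3$) with $l_2 = l_3 = 0$ reproduces the displayed symmetric matrix $u_2 = v_2$. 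By Lemma~\ref{lem:degree.shift} this matrix factorization is independent of the line bundle $L$ in the extension bundle, so the same pair $(u_2,v_2)$ works for all three bundles; only the projective cover decompositions (read off Proposition~\ref{proj_cover_2_3_3}) distinguish them.

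For the rank-three bundle $E_3$, the plan is to first build the factorization frame $(U_3,V_3)$ attached to the projective cover listed in Proposition~\ref{proj_cover_2_3_3}. Fix a line bundle ordering of $\pc{E_3}$ agreeing with the columns of the displayed $u_3$, and transfer this decomposition by the shifts $-\vx_1$ and $-\vc$ to the remaining terms of the minimal projective resolution, so that all decompositions are compatible in the sense of Section~4. By Theorem~\ref{thm:hom_dimensionsI}, each entry of the frame is then either zero (when the corresponding Hom-space vanishes) or the unique monomial with small exponents in degree $\vy_j - \vy_i + \vx_1$. Because the suspension on $\svect\XX$ is induced by the shift by $\vx_1$, the minimal projective resolution has the symmetric form $\pc{E_3}(-\vc) \stackrel{\bar u(-\vx_1)}{\lra} \pc{E_3}(-\vx_1) \stackrel{\bar u}{\lra} \pc{E_3} \to E_3 \to 0$, so in compatible decompositions the two frames coincide and we seek a single matrix $u_3$ satisfying $u_3^2 = f\,\id$ over $T$.

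The main obstacle is pinning down the scalars in the specialization so that $(u_3, v_3)$ genuinely represents $E_3$ rather than a proper direct sum as in Example~\ref{expl:decompose}. Lemma~\ref{observation_dim_1}(b) ensures that for every line bundle summand $L$ of $\pc{E_3}$ (respectively of $\ih{E_3}$), $\HomX(L,E_3) = k$ (respectively $\HomX(E_3,L) = k$); combined with Corollary~\ref{cor_matrix_frame_dim_1}, this forces the entries of $u_3$ attached to component maps of the projective cover and injective hull to be non-zero, and hence constrains which entries of the frame may be specialized to zero. The condition $u_3^2 = f\,\id$ then reduces to a finite system of polynomial identities in the remaining free scalars, and a direct calculation verifies that the displayed matrix (with scalars in $\{0,\pm1\}$) is a solution. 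Finally, since $u_3^2 = f\,\id$ produces an indecomposable Cohen-Macaulay module with the prescribed projective cover, Proposition~\ref{prop:determined_by_cover} identifies the cokernel with $E_3$, completing the proof.
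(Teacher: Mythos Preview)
Your treatment of the rank-two bundles $E_2$, $F_2$, $G_2$ matches the paper: identify them as Auslander bundles $\extb{L}{\vnull}$ and invoke Proposition~\ref{prop:mf:2ab:rk2}.

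For $E_3$, however, there is a genuine gap in the final step. You correctly set up the factorization frame and verify that the displayed matrix satisfies $u_3^2=f\,\id$, but then you simply assert that this ``produces an indecomposable Cohen--Macaulay module.'' Nothing you have done up to that point establishes indecomposability. Corollary~\ref{cor_matrix_frame_dim_1} and Lemma~\ref{observation_dim_1}(b) give \emph{necessary} conditions on a specialization that represents $E_3$ (certain entries must be nonzero), but satisfying those conditions together with $u_3^2=f\,\id$ is not sufficient to conclude that the cokernel is indecomposable---this is precisely the pitfall illustrated by Example~\ref{expl:decompose}. Without indecomposability, Proposition~\ref{prop:determined_by_cover} cannot be applied.

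The paper closes this gap with a short direct argument you omit: by the Observation following Example~\ref{expl:decompose}, the six line bundle summands of $\pc{E_3}$ are pairwise Hom-orthogonal. Hence any endomorphism $(K,H)$ of the matrix factorization $(u_3,v_3)$ is given by a pair of \emph{diagonal} scalar matrices, and commutation with $u_3$ forces all diagonal entries to coincide. This shows $\End(u_3,v_3)=k$, so the factorization is indecomposable, and only then does Proposition~\ref{prop:determined_by_cover} identify its cokernel with $E_3$. You should insert this Hom-orthogonality argument (or an equivalent check that the endomorphism ring is local) before invoking Proposition~\ref{prop:determined_by_cover}.
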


\begin{pf}
The vector bundles $E_2$, $F_2$, $G_2$ are extension bundles determined by the pairs $(\Oo(-\vw), \vec{0})$, $(\Oo(\vx_2+\vw), \vec{0})$, $(\Oo(\vx_3+\vw), \vec{0})$, respectively. Therefore the claim for those bundles results from Proposition \ref{Thm:sym_mat_fact_for_extension_bundle}. Concerning the vector bundle $E_3$, it is first checked that $u_3v_3=f\,\id=v_3u_3$. Since, moreover, the direct summands of $\pc{E_3}$ are mutually Hom-orthogonal, it is easy to check that the matrix factorization $(u_3,v_3)$ is indecomposable. By Proposition \ref{prop:determined_by_cover} it then represents the vector bundle in question.
\end{pf}

\subsubsection*{The triangle singularity $x^2+y^3+z^4$} \label{ssect:2,3,4}

For the weight type $(2,3,4)$, each indecomposable vector bundle is of rank $1$, $2$, $3$ or $4$. As in Proposition \ref{Thm:sym_mat_fact_for_extension_bundle}, and using the notations introduced there, we only need to determine matrix factorizations for the vector bundles $E_2,E_3,E_4$ and $F_2$, since by symmetry $E_i$ and $E_i(\vx_1-2\vx_3)$ will yield the same matrix pair. Moreover, matrix factorizations for the rank-two bundles $E_2$, and $G_2$ are already given by Proposition \ref{Thm:sym_mat_fact_for_extension_bundle}. we thus obtain:

\begin{Prop} \label{prop:mf:234}
For the singularity $x^2+y^3+z^4$ we obtain the following symmetric matrix factorizations
$(u_{E_i}, v_{E_i}, \pc{E_i})$ for $i=2,3,4$, $(u_{E_i}, v_{E_i}, \pc{E_i(\vx_1-2\vx_3)})$ for $i=2,3$, $(u_{G_2}, v_{G_2}, \pc{G_2})$,
where
$$u_{E_2}=v_{E_2}=\left[\begin{array}{cccc}
x&0&-z^{3}&y^2\\
0& x& y & z\\
-z& y^2& -x&0\\
y& z^3& 0&-x
\end{array}\right]$$

$$u_{E_3}=v_{E_3}=\left[\begin{array}{cccccc}
x & 0 & z^{3} & 0 & -y^{2} & -yz^{2} \\
0 & x & yz & 0 & z^{2} & -y^{2} \\
z & 0 & -x & y & 0 & 0 \\
0 & 0 & y^{2} & x & yz & z^{3} \\
-y & z^{2} & 0 & 0 & -x & 0 \\
0 & -y & 0 & z & 0 & -x%
\end{array}\right]$$
$$u_{E_4}=v_{E_4}=\left[\begin{array}{cccc|cccc}
x & 0 & -z^{2} & y^{2} & 0 & -yz & 0 & 0 \\
0 & x & y & z^{2} & z & 0 & 0 & 0 \\
-z^{2} & y^{2} & -x & 0 & 0 & 0 & 0 & -yz \\
y & z^{2} & 0 & -x & 0 & 0 & z & 0 \\
\hline
0 & 0 & 0 & 0 & -x & 0 & -z^{2} & y^{2} \\
0 & 0 & 0 & 0 & 0 & -x & y & z^{2} \\
0 & 0 & 0 & 0 & -z^{2} & y^{2} & x & 0 \\
0 & 0 & 0 & 0 & y & z^{2} & 0 & x%
\end{array}\right]$$
$$u_{G_2}=v_{G_2}=\left[\begin{array}{cccc}
x&0&-z&y^2\\
0& x& y & z^2\\
-z^2& y^2& -x&0\\
y& z^2& 0&-x
\end{array}\right],$$
\end{Prop}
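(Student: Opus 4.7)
The strategy is to treat the bundles by increasing rank. Since $\pc{E(\vy)} = \pc{E}(\vy)$ and Lemma \ref{lem:degree.shift} shows that the matrix factorization is insensitive to a degree shift, the bundles $E_i(\vx_1-2\vx_3)$ share a factorization with $E_i$, and it suffices to handle $E_2, F_2, G_2, E_3, E_4$.

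For the rank-two cases I would identify $E_2 = \extbo{\vnull}$, $G_2 = \extbo{\vx_3}$, and $F_2 = \extbo{\vx_2}$ (each satisfying $\vnull \leq \vx \leq \vdom = \vx_2 + 2\vx_3$), then invoke Proposition \ref{prop:mf:2ab:rk2} with $a=3$, $b=4$ and $(l_2,l_3) \in \{(0,0),(0,1),(1,0)\}$. The $4 \times 4$ symmetric matrices listed in the statement fall out directly.

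For $E_3$, Theorem \ref{thm:hom_dimensionsI} together with the projective cover in Proposition \ref{proj_cover_2_3_4} produces a factorization frame as a $6 \times 6$ matrix whose entries are either zero or uniquely determined monomials with small exponents $0\leq l_1\leq 1$, $0\leq l_2\leq 2$, $0\leq l_3\leq 3$. The zero pattern is thereby forced; what remains is to pick signs so that $u_3^2 = f \, \id$. I would verify this identity by direct entry-by-entry computation and confirm that the signs in the statement work. Indecomposability then follows from the pairwise Hom-orthogonality of the six line bundle summands of $\pc{E_3}$ (the Observation at the end of the previous section), from Corollary \ref{cor_matrix_frame_dim_1} combined with Lemma \ref{observation_dim_1}, and from Proposition \ref{prop:determined_by_cover}, which identifies the cokernel with $E_3$.

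For $E_4$, I would start from the almost-split sequence $0 \to \tauX G_2 \to E_4 \to G_2 \to 0$, whose distinguished exactness (via the Ext-vanishing hypothesis of Lemma \ref{proj_cover_from_sequence}) was already verified in Proposition \ref{proj_cover_2_3_4}. Lemma \ref{matrix_frame_from_sequence} then delivers a block upper-triangular factorization frame
\[
U_{E_4} \;=\; \begin{pmatrix} U_{\tauX G_2} & B \\ 0 & U_{G_2} \end{pmatrix},
\]
with diagonal blocks inherited from the rank-two analysis and an off-diagonal block $B$ prescribed, up to scalars, by the composition $\beta \circ \alpha$ of liftings. The task reduces to choosing signs in $B$ so that $U_{E_4}$ is symmetric and $U_{E_4}^2 = f \, \id$; the matrix given in the statement does this. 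Corollary \ref{cor_matrix_frame_dim_1} then guarantees that enough entries of $B$ are non-zero to exclude decomposability (cf.\ Example \ref{expl:decompose}), and Proposition \ref{prop:determined_by_cover} confirms the cokernel is $E_4$. The main obstacle throughout is the sign bookkeeping: that one can realize both the factorization identity and the symmetry $v = u$ using scalars drawn from $\{0,\pm 1\}$ is the non-trivial content of the proposition, and in practice it is verified by writing out the explicit matrices displayed.
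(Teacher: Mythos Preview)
Your treatment of the rank-two bundles and of $E_3$ is essentially the paper's: identify the extension bundles, apply Proposition~\ref{prop:mf:2ab:rk2}, and for rank three use the Hom-orthogonality of the summands of $\pc{E_3}$ to get a local endomorphism ring, then invoke Proposition~\ref{prop:determined_by_cover}.

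For $E_4$, however, there is a genuine gap. You write that Corollary~\ref{cor_matrix_frame_dim_1} ``guarantees that enough entries of $B$ are non-zero to exclude decomposability''. This fails on two counts. First, the corollary is a statement about matrix factorizations \emph{already known to represent} $E$: it says that if $(u,v)$ is attached to $E$, then certain scalars cannot vanish. Using it to establish that your candidate represents $E_4$ is circular. Moreover, its hypothesis $\Hom(\Oo(\vy_i),E)=k=\Hom(E,\Oo(\vz_j))$ is only secured by Lemma~\ref{observation_dim_1} for ranks two and three; for rank four no such statement is available. Second, even granting that $B\neq 0$, this only rules out the particular block-diagonal splitting $\tau G_2\oplus G_2$ flagged in Example~\ref{expl:decompose}; it does not by itself prove that the endomorphism ring is local.

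The paper closes this gap by an explicit endomorphism computation. Writing a general endomorphism $(K,H)$ of $(u_{E_4},v_{E_4})$ in $2\times 2$ block form compatible with the decomposition $\pc{E_4}=\pc{\tau G_2}\oplus\pc{G_2}$, the equation $Hu_{E_4}=u_{E_4}K$ forces the diagonal blocks $(K_1,H_1)$ and $(K_4,H_4)$ to be endomorphisms of the rank-two factorizations $(u_{\tau G_2},v_{\tau G_2})$ and $(u_{G_2},v_{G_2})$, hence scalar by the rank-two case. The off-diagonal equation $H_3 u_{\tau G_2}=u_{G_2}K_3$ is then solved entry by entry to give $H_3=K_3=0$, and finally the relation $H_1 B=B K_4$ with $B\neq 0$ forces the two scalars to coincide. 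This is the missing step you would need to supply.
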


\begin{pf}
It easy to check that the above matrices satisfy the matrix equations $u^2=f\,\id$. By Proposition \ref{prop:determined_by_cover}, it remains to prove the indecomposability of those matrix factorizations by showing that their endomorphism rings are trivial. For the indecomposable vector bundles of rank two and three, the indecomposability easily follows, since the indecomposable direct summands of their projective covers are Hom-orthogonal. It remains to check indecomposability for $(u_{E_4}, v_{E_4})$, where
$$u_{E_4}=v_{E_4}=\left[\begin{array}{c|c}
u_{\tauX T}&B\\
\hline
0&u_{T}
\end{array}\right]$$
by using the explicit form of the projective cover of $E_4$ by means of Proposition~\ref{proj_cover_2_3_4}. The same argument yields the shape of
an endomorphism  $(K,H)$ for $(u_{E_4}, v_{E_4})$ as follows:
$$H=\left[\begin{array}{c|c}
H_1&0\\
\hline
H_3&H_4
\end{array}\right]=\left[\begin{array}{cccc|cccc}
f_{1} & 0 & 0 & 0 & 0 & 0 & 0 & 0 \\
0 & f_{2} & 0 & 0 & 0 & 0 & 0 & 0 \\
0 & 0 & f_{3} & 0 & 0 & 0 & 0 & 0 \\
0 & 0 & 0 & f_{4} & 0 & 0 & 0 & 0 \\
\hline
0 & 0 & 0 & f_{5,4}z & f_{5} & 0 & 0 & 0 \\
0 & 0 & 0 & 0 & 0 & f_{6} & 0 & 0 \\
0 & f_{7,2}z & 0 & 0 & 0 & 0 & f_{7} & 0 \\
0 & 0 & 0 & 0 & 0 & 0 & 0 & f_{8}%
\end{array}\right],$$
$$K=\left[\begin{array}{c|c}
K_1&0\\
\hline
K_3&K_4
\end{array}\right]=\left[ \begin{array}{cccc|cccc}
g_{1} & 0 & 0 & 0 & 0 & 0 & 0 & 0 \\
0 & g_{2} & 0 & 0 & 0 & 0 & 0 & 0 \\
0 & 0 & g_{3} & 0 & 0 & 0 & 0 & 0 \\
0 & 0 & 0 & g_{4} & 0 & 0 & 0 & 0 \\
\hline
0 & 0 & 0 & g_{5,4}z & g_{5} & 0 & 0 & 0 \\
0 & 0 & 0 & 0 & 0 & g_{6} & 0 & 0 \\
0 & g_{7,2}z & 0 & 0 & 0 & 0 & g_{7} & 0 \\
0 & 0 & 0 & 0 & 0 & 0 & 0 & g_{8}%
\end{array}\right].$$
It follows that $Hu_{E_4} =u_{E_4}K$. Now, in the block matrix form
$$0=\left[\begin{array}{c|c}
H_1&0\\
\hline
H_3&H_4
\end{array}\right]
\left[\begin{array}{c|c}
u_{\tauX T}&B\\
\hline
0&u_{T}
\end{array}\right]-
\left[\begin{array}{c|c}
u_{\tauX T}&B\\
\hline
0&u_{T}
\end{array}\right]
\left[\begin{array}{c|c}
K_1&0\\
\hline
K_3&K_4
\end{array}\right]$$
$$=
\left[\begin{array}{c|c}
H_1u_{\tauX T}-u_{\tauX T}K_1&H_1B-BK_4\\
\hline
H_3u_{\tauX T}-u_TK_3&H_3B+H_4u_T-u_TK_4
\end{array}\right],
$$
and we obtain that $(K_1,H_1)$ is an endomorphism for $(u_{\tauX T}, v_{\tauX T})$. Therefore $H_1=K_1=\lambda \id_4$. Moreover
$$0=H_3u_{\tauX T}-u_TK_3=\left[\begin{array}{cccc}
yzf_{5,4} & z^{3}f_{5,4}+z^{3}g_{7,2} & 0 &
-xzf_{5,4}+xzg_{5,4} \\
0 & -yzg_{7,2} & 0 & 0 \\
0 & xzf_{7,2}-xzg_{7,2} & yzf_{7,2} &
z^{3}f_{7,2}+z^{3}g_{5,4} \\
0 & 0 & 0 & -yzg_{5,4}%
\end{array}\right].$$
Thus $K_3=H_3=0$, hence $(K_4,H_4)$ is an endomorphism for $(u_T,v_T)$, and from the indecomposability of $(u_T,v_T)$, we get $H_4=K_4=\mu \id_4.$ The equation $H_1B-BK_4$ implies that $\lambda=\mu$. Therefore  $(u_{E_4},v_{E_4})$ is indecomposable, and the claim follows from Proposition \ref{prop:determined_by_cover}.
\end{pf}

\subsubsection*{The triangle singularity $x^2+y^3+z^5$} \label{ssect:2,2,5}

In this case, each indecomposable vector bundle on $\XX$ is of rank $m$, $1 \le m \le 6$, and there is a single $\tauX$-orbit of vector bundles of rank $5$, respectively of rank $6$.

For a representative system of indecomposable vector bundles $E_2,E_3,\ldots,E_6$, $F_2,F_4$ and $G_3$ of rank at least two, we use the choices and notations of Section \ref{sec_proj_covers}.

\begin{Prop} \label{prop:mf:235}
For the singularity $f=x^2+y^3+z^5$ we obtain the following $8$ matrix factorizations
$(u_{E_2},v_{E_2},\pc{E_2}),\dots, (u_{E_6},v_{E_6},\pc{E_6})$, and $(u_{F_2},v_{F_2},\pc{F_2})$, $(u_{F_4},v_{F_4},\pc{F_4})$, and $(u_{G_3},v_{G_3},\pc{G_3})$ where
$$u_{E_2}=v_{E_2}=\left[\begin{array}{cccc}
x&0&-z^4&y^2\\
0& x& y & z\\
-z& y^2& -x&0\\
y& z^4& 0&-x
\end{array}\right]$$
$$u_{E_3}=v_{E_3}=\left[\begin{array}{cccccc}
x & 0 & -y^{2} & 0 & z^{4} & -yz^{3} \\
0 & x & yz & 0 & y^{2} & z^{4} \\
-y & 0 & -x & z^{3} & 0 & 0 \\
0 & 0 & z^{2} & x & yz & -y^{2} \\
z & y & 0 & 0 & -x & 0 \\
0 & z & 0 & -y & 0 & -x%
\end{array}\right]$$
$$u_{E_4}=v_{E_4}=\left[\begin{array}{cccc|cccc}
x & 0 & z^{3} & y^{2} & 0 & 0 & 0 & yz \\
0 & x & -y & z^{2} & 0 & 0 & z & 0 \\
z^{2} & -y^{2} & -x & 0 & 0 & yz & 0 & 0 \\
y & z^{3} & 0 & -x & z & 0 & 0 & 0 \\
\hline
0 & 0 & 0 & 0 & x & 0 & -z^{3} & -y^{2} \\
0 & 0 & 0 & 0 & 0 & x & y & -z^{2} \\
0 & 0 & 0 & 0 & -z^{2} & y^{2} & -x & 0 \\
0 & 0 & 0 & 0 & -y & -z^{3} & 0 & -x%
\end{array} \right]$$
$$u_{E_5}=v_{E_5}=\left[\begin{array}{cccccc|cccc}
x & 0 & 0 & y^{2} & yz^{2} & z^{4} & 0 & 0 & 0 & -z^{3} \\
0 & x & 0 & -z^{3} & y^{2} & yz^{2} & 0 & 0 & 0 & -yz \\
0 & 0 & x & -yz & -z^{3} & y^{2} & 0 & 0 & z^{2} & 0 \\
y & -z^{2} & 0 & -x & 0 & 0 & 0 & 0 & 0 & 0 \\
0 & y & -z^{2} & 0 & -x & 0 & z & 0 & 0 & 0 \\
z & 0 & y & 0 & 0 & -x & 0 & z^{2} & 0 & 0 \\
\hline
0 & 0 & 0 & 0 & 0 & 0 & x & 0 & z^{3} & y^{2} \\
0 & 0 & 0 & 0 & 0 & 0 & 0 & x & -y & z^{2} \\
0 & 0 & 0 & 0 & 0 & 0 & z^{2} & -y^{2} & -x & 0 \\
0 & 0 & 0 & 0 & 0 & 0 & y & z^{3} & 0 & -x%
\end{array} \right]$$
$$u_{E_6}=v_{E_6}=
\left[
\begin{array}{cccccc|cccccc}
x & 0 & 0 & y^{2} & yz^{2} & z^{4} & 0 & -yz & 0 & 0 & 0 & 0 \\
0 & x & 0 & -z^{3} & y^{2} & yz^{2} & z^{2} & 0 & 0 & 0 & 0 & 0 \\
0 & 0 & x & -yz & -z^{3} & y^{2} & y & 0 & 0 & 0 & 0 & 0 \\
y & -z^{2} & 0 & -x & 0 & 0 & 0 & 0 & 0 & 0 & yz & z^{3} \\
0 & y & -z^{2} & 0 & -x & 0 & 0 & 0 & 0 & 0 & 0 & 0 \\
z & 0 & y & 0 & 0 & -x & 0 & 0 & 0 & -y & 0 & 0 \\
\hline
0 & 0 & 0 & 0 & 0 & 0 & -x & 0 & 0 & y^{2} & yz^{2} & z^{4} \\
0 & 0 & 0 & 0 & 0 & 0 & 0 & -x & 0 & -z^{3} & y^{2} & yz^{2} \\
0 & 0 & 0 & 0 & 0 & 0 & 0 & 0 & -x & -yz & -z^{3} & y^{2} \\
0 & 0 & 0 & 0 & 0 & 0 & y & -z^{2} & 0 & x & 0 & 0 \\
0 & 0 & 0 & 0 & 0 & 0 & 0 & y & -z^{2} & 0 & x & 0 \\
0 & 0 & 0 & 0 & 0 & 0 & z & 0 & y & 0 & 0 & x%
\end{array}
\right].$$
$$u_{F_2}=v_{F_2}=\left[\begin{array}{cccc}
x&0&-z^{3}&y^2\\
0& x& y & z^2\\
-z^2& y^2& -x&0\\
y& z^3& 0&-x
\end{array}\right]$$
$$u_{F_4}=v_{F_4}=\left[ \begin{array}{cccc|cccc}
x & 0 & z^{3} & y^{2} & 0 & yz^{2} & 0 & 0 \\
0 & x & -y & z^{2} & z & 0 & 0 & 0 \\
z^{2} & -y^{2} & -x & 0 & 0 & 0 & 0 & yz \\
y & z^{3} & 0 & -x & 0 & 0 & z^{2} & 0 \\
\hline
0 & 0 & 0 & 0 & -x & 0 & -z^{3} & y^{2} \\
0 & 0 & 0 & 0 & 0 & -x & -y & -z^{2} \\
0 & 0 & 0 & 0 & -z^{2} & -y^{2} & x & 0 \\
0 & 0 & 0 & 0 & y & -z^{3} & 0 & x%
\end{array}\right]$$
$$u_{G_3}=v_{G_3}=\left[\begin{array}{cccccc}
x & 0 & 0 & y^{2} & yz^{2} & z^{4} \\
0 & x & 0 & -z^{3} & y^{2} & yz^{2} \\
0 & 0 & x & -yz & -z^{3} & y^{2} \\
y & -z^{2} & 0 & -x & 0 & 0 \\
0 & y & -z^{2} & 0 & -x & 0 \\
z & 0 & y & 0 & 0 & -x%
\end{array}  \right]$$
\end{Prop}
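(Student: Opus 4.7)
\begin{pf}[Proof plan]
The plan is to handle the eight matrix factorizations in increasing rank, following the same two-step template already used for Propositions~\ref{prop:mf:233} and \ref{prop:mf:234}: first verify the factorization identity $u^2=f\,\id$ by a direct (but routine) matrix computation, and then establish indecomposability so that Proposition~\ref{prop:determined_by_cover} allows us to identify the cokernel with the claimed indecomposable bundle via its projective cover.

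For the rank-two bundles $E_2$ and $F_2$, these are extension bundles determined, respectively, by the pairs $(\Oo,\vnull)$ and $(\Oo,\vx_3)$ (after a $\tau$-shift, so that Lemma~\ref{lem:degree.shift} applies). The claim then follows directly from Proposition~\ref{prop:mf:2ab:rk2}. For the rank-three bundles $E_3$ and $G_3$ I would check the matrix equation $u_{E_3}^2=f\,\id$ and $u_{G_3}^2=f\,\id$ entry by entry, and then use that, by Proposition~\ref{proj_cover_2_3_5} combined with the rank-three part of the Observation following Lemma~\ref{observation_dim_1}, the line bundle summands of $\pc{E_3}$ and $\pc{G_3}$ are pairwise Hom-orthogonal. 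An endomorphism $(H,K)$ of the matrix factorization must therefore be represented by diagonal matrices whose entries are scalars; the commutation relation $Hu=uK$ then forces these scalars to be equal, giving $\End(u,v)=k$ and hence indecomposability.

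For the higher-rank bundles $F_4$, $E_4$, $E_5$ and $E_6$, the key observation is that each of them sits in a distinguished exact sequence supplied by Proposition~\ref{proj_cover_2_3_5}: namely, $0\to\tauX F_2\to F_4\to F_2\to 0$, $0\to\tauX^2 F_2\to E_4\to\tauX^{-2}F_2\to 0$, $0\to\tauX G_3\to E_5\to\tauX^{-}F_2\to 0$, and $0\to G_3\to E_6\to\tauX^{-}G_3\to 0$. Via Lemma~\ref{matrix_frame_from_sequence}, each of these sequences induces an upper block-triangular shape of the corresponding factorization frame, with the diagonal blocks being factorization frames for the already-treated bundles of smaller rank. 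After verifying $u^2=f\,\id$ for the displayed matrices, I would mimic the argument carried out for $u_{E_4}$ in the proof of Proposition~\ref{prop:mf:234}: an endomorphism $(H,K)$ must respect the block decomposition (its ``upper'' off-diagonal block $H_3$ vanishes because line bundle summands in the upper-left block cannot map non-trivially to those in the lower-right block by a comparison of slopes/positions in the AR-quiver, using Theorem~\ref{thm:hom_dimensionsI}). Then $(H_1,K_1)$ and $(H_4,K_4)$ are endomorphisms of the diagonal blocks, hence scalar by the rank-two/three case already settled, and the commutator equation on the off-diagonal block $B$ forces the two scalars to coincide. This yields $\End(u,v)=k$, hence indecomposability, and Proposition~\ref{prop:determined_by_cover} together with the projective covers computed in Proposition~\ref{proj_cover_2_3_5} identifies the cokernel with the correct bundle.

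The main obstacle is the rank-six bundle $E_6$. By Proposition~\ref{prop:multiplicity:line_bundle_summands}, its projective cover is \emph{not} multiplicity-free (only $11$ of the $12$ line bundle summands are pairwise non-isomorphic), so the endomorphisms $H$, $K$ are not forced to be block-diagonal-scalar merely by Hom-orthogonality; the parametrization of endomorphisms compatible with the block-triangular form of $u_{E_6}$ involves genuine off-diagonal entries. I would handle this either by using the first extension sequence $0\to G_3\to E_6\to\tauX^{-}G_3\to 0$, reducing to the already established indecomposability of $u_{G_3}$ and a careful analysis of the off-diagonal block $B$ (analogously to the $u_{E_4}$ case in Proposition~\ref{prop:mf:234} but with the repeated summand carefully tracked), or alternatively by using the second sequence $0\to\tauX F_4\to E_6\to\tauX^{-}F_2\to 0$ mentioned in the Remark after Proposition~\ref{proj_cover_2_3_5}, where the diagonal blocks have different ranks ($4$ and $2$) and the forced vanishing of $H_3$ is cleaner. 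Either way, once $H$ and $K$ are shown to be scalar multiples of the identity, the conclusion follows from Proposition~\ref{prop:determined_by_cover}, completing the proof.
\end{pf}
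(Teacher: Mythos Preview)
Your plan is correct and follows essentially the same route as the paper: rank-two bundles via Proposition~\ref{prop:mf:2ab:rk2}, rank-three bundles via Hom-orthogonality of the projective cover summands, and the higher-rank bundles $E_4,E_5,E_6,F_4$ via the same distinguished exact sequences (from Proposition~\ref{proj_cover_2_3_5}) and the block-triangular endomorphism argument of Proposition~\ref{prop:mf:234}. Your discussion of the multiplicity issue for $E_6$ is more explicit than the paper's, which simply points to the $(2,3,4)$ argument; note only that in the block form $H=\left[\begin{smallmatrix}H_1&0\\H_3&H_4\end{smallmatrix}\right]$ it is the \emph{upper-right} block that vanishes by Hom-considerations, while $H_3$ (lower-left) is shown to vanish from the commutator equation, so you should adjust that part of your description accordingly.
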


\begin{pf}
Similarly as in the case $(2,3,4)$ we check that $uv=f\,\id=vu$ and verify that these matrix factorizations are indecomposable. The case of rank-two bundles is covered by Proposition \ref{prop:mf:2ab:rk2}. The matrix factorizations for the rank-three bundles $E_3$ and $F_3$ are verified following the arguments of Proposition \ref{prop:mf:234}. Concerning the remaining vector bundles,  we determine the pair $(u_{E_i},v_{E_i})$ for $i=4,5,6$ and  $(u_{F_i}$, $v_{F_i})$ for $i=4$ by specialization of factorization frames coming from distinguished exact sequences by means of Proposition \ref{proj_cover_2_3_5}. In particular, for $(u_{E_4},v_{E_4})$, $(u_{E_5},v_{E_5})$, $(u_{E_6},v_{E_6})$ and $(u_{F_4},v_{F_4})$, we use the exact sequences $0\lra \tauX^2F_2\lra E_4\lra \tauX^{-2} F_2\lra 0$, $0\lra \tauX G_3\lra E_5\lra \tauX^- F_2\lra 0$, $0\lra G_3\lra E_6\lra \tauX^- G_3\lra 0$ and $0\lra \tauX F_2\lra F_4\lra F_2\lra 0$.
\end{pf}

\begin{Rem}
Observe that $0\lra\tauX F_4\lra E_6\lra \tauX^-F_2\lra 0$ is a distinguished exact sequence, thus satisfying the assumptions of Lemma \ref{matrix_frame_from_sequence}. Using the resulting direct decomposition $\pc{E_6}=\pc{\tau F_4}\oplus \pc{\tau^- F_2}$, we get another -- essentially different-- pair of matrices, also yielding a matrix factorization of $E_6$:
$$u'_{E_6}=v'_{E_6}=\left[ \begin{array}{cccccccc|cccc}
x & 0 & z^{3} & y^{2} & 0 & yz^{2} & 0 & 0 & 0 & 0 & 0 & -z^{3} \\
0 & x & -y & z^{2} & z & 0 & 0 & 0 & 0 & 0 & 0 & y \\
z^{2} & -y^{2} & -x & 0 & 0 & 0 & 0 & yz & y & z^{3} & 0 & 0 \\
y & z^{3} & 0 & -x & 0 & 0 & z^{2} & 0 & 0 & 0 & 0 & 0 \\
0 & 0 & 0 & 0 & -x & 0 & -z^{3} & y^{2} & 0 & 0 & 0 & 0 \\
0 & 0 & 0 & 0 & 0 & -x & -y & -z^{2} & 0 & 0 & 0 & 0 \\
0 & 0 & 0 & 0 & -z^{2} & -y^{2} & x & 0 & 0 & 0 & 0 & 0 \\
0 & 0 & 0 & 0 & y & -z^{3} & 0 & x & 0 & 0 & 0 & 0 \\
\hline
0 & 0 & 0 & 0 & 0 & 0 & 0 & 0 & x & 0 & z^{3} & y^{2} \\
0 & 0 & 0 & 0 & 0 & 0 & 0 & 0 & 0 & x & -y & z^{2} \\
0 & 0 & 0 & 0 & 0 & 0 & 0 & 0 & z^{2} & -y^{2} & -x & 0 \\
0 & 0 & 0 & 0 & 0 & 0 & 0 & 0 & y & z^{3} & 0 & -x%
\end{array}\right]$$
In particular, the numbers of zero entries differ for both matrix factorizations.
\end{Rem}

\begin{Rem}
If we compare our matrix factorizations for $E_6$, with the matrix factorization obtained in \cite{KST-1}, we also see that essentially different matrix factorizations are obtained, since for one matrix factorization there appear monomial entries $z^4$, for the other one not.
\end{Rem}

\section{Appendix: Tables of projective covers}
The figures of this section yield compact visual information on the projective covers of indecomposable vector bundles of rank at least two. Our figures may be especially useful for specialists from the representation theory of finite dimensional algebras investigating the related situation in preprojective, or preinjective, components for tame concealed quivers. We note that, in the representation-theoretic context, the line bundle notation $\Oo(\vx)$, reduced in the figures to $(\vx)$, is not established, so the given positions in the mesh category of the associated extended Dynkin quiver should be useful. The names $E_i$, $F_j$ and $G_l$ for selected vector bundles are those from Section \ref{sec_proj_covers}.

\subsection*{Weight type $(2,2,n)$} \quad

\newcommand{\neven}{\xymatrix @=20pt @!0 {
\ar@{-}[dddrrr]&&*+<1pt>{\tb{(\vx_3+\vu)}} \ar@{-}[dddrrr]&&\ar@{-}[dddrrr]&&\ar@{-}[ddrr]&&\\
\ar@{-}[rr]&&*+<1pt>{\bb{(-\vom+\vu)}}\ar@{-}[rrrrrr]&&&&&&\\
\ar@{-}[dr]\ar@{-}[uurr]&&&&&&&&\\
&*+<1pt>{}\ar@{-}[uuurrr]&&*+<1pt>{}\ar@{-}[uuurrr]&&*+<1pt>{E}\ar@{-}[uuurrr]&&*+<1pt>{}\ar@{-}[ur]&\\
*+<1pt>{}\ar@{.}[ur]&&*+<1pt>{}\ar@{.}[ur]&&*+<1pt>{}\ar@{.}[ur]&&*+<1pt>{}\ar@{.}[ur]&&\\
\ar@{-}[rr]&&*+<1pt>{\tb{(\vx_3)}}\ar@{-}[rrrrrr]&&&&&&\\
\ar@{-}[uurr]&&*+<1pt>{\tb{(-\vw)}}\ar@{-}[uull]\ar@{-}[uurr]&&\ar@{-}[uull]\ar@{-}[uurr]&&\ar@{-}[uull]\ar@{-}[uurr]&&\ar@{-}[uull]\\
}}
\newcommand{\nodd}{\xymatrix @=20pt @!0 {
&\ar @{-}[2,2]&&*+<1pt>{\bb{(\vx_1+\vw})}\ar @{-}[2,2]&&\ar @{-}[2,2]&&\ar @{-}[1,1]&\\
\ar@{-}[0,3]\ar@{-}[dr]\ar@{-}[ur]&&&*+<1pt>{\bb{(\vx_2+\vw})}\ar@{-}[0,5]&&&&&\\
&*+<1pt>{}\ar@{-}[uurr]&&*+<1pt>{}\ar@{-}[uurr]&&*+<1pt>{E}\ar@{-}[uurr]&&*+<1pt>{}\ar@{-}[ur]&\\
*+<1pt>{}\ar@{.}[ur]&&*+<1pt>{}\ar@{.}[ur]&&*+<1pt>{}\ar@{.}[ur]&&*+<1pt>{}\ar@{.}[ur]&&\\
\ar@{-}[rr]&&*+<1pt>{\tb{(\vx_3)}}\ar@{-}[rrrrrr]&&&&&&\\
\ar@{-}[uurr]&&*+<1pt>{\tb{(-\vw)}}\ar@{-}[uull]\ar@{-}[uurr]&&\ar@{-}[uull]\ar@{-}[uurr]&&\ar@{-}[uull]\ar@{-}[uurr]&&\ar@{-}[uull]\\
}}

\begin{figure}[H]
\small
$$
\begin{array}{cc}
\tiny\neven & \tiny\nodd \\
n \textrm{ even}& n \textrm{ odd}
\end{array}
$$
\caption{$(2,2,n)$: Projective cover of $E=\extb{}{i\vx_3}$, where $\vu=\vx_1-\vx_2$}
\end{figure}

\subsection*{Weight type $(2,3,3)$}
The projective covers of the `remaining' indecomposable vector bundles then are obtained by applying twice the rotation $X\mapsto X(\vx_2-\vx_3)$ around the central axis.

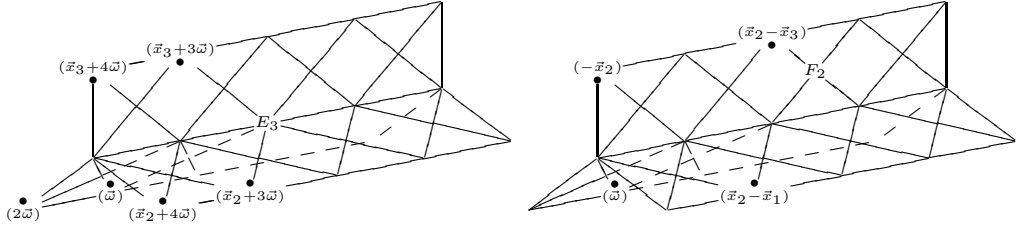
\begin{figure}[H]
{\tiny
$\xymatrix @=3.3pt @!0 {
&&&&&&&&&&&&&&&&&&&&&&&&&&&&&&&&&&&&&&&&&&&&&&&&*+<1pt>{}&&&&&&&\\
&&&&&&&&&&&&&&&&&&&&&&&&&&&&&&&&&&&&&&&&&&&&&&&&&&&&&&&&\\
&&&&&&&&&&&&&&&&&&&&&&&&&&&&&&&&&&&&&&*+<1pt>{}\ar @{-}[8,10] &&&&&&&&&&&&&&&\\
&&&&&&&&&&&&&&&&&&&&&&&&&&&&&&&&&&&&&&&&&&&&&&&&&&&&&&&&\\
&&&&&&&&&&&&&&&&&&&&&&&&&&&&*+<1pt>{}\ar @{-}[8,10] &&&&&&&&&&&&&&&&&&&&&&&&&&&&\\
&&&&&&&&&&&&&&&&&&&&&&&&&&&&&&&&&&&&&&&&&&&&&&&&&&&&&&&&\\
&&&&&&&&&&&&&&&&&&*+<1pt>{\tb{(\vx_3+3\vw)}}\ar @{-}@[ashy][-6,30]\ar @{-}[8,10] &&&&&&&&&&&&&&&&&&&&&&&&& &&&&&&&&&&&&&\\
&&&&&&&&&&&&&&&&&&&&&&&&&&&&&&&&&&&&&&&&&&&&&&&&&&&&&&\\
&&&&&&&&*+<1pt>{\tb{(\vx_3+4\vw)}}\ar @{-}@[ashy][-2,10]\ar @{-}[8,10] &&&&&&&&&&&&&&&&&&&&&&&&&&&&&&&&&&&&&&&&&&&&&&&&\\
&&&&&&&&&&&&&&&&&&&&&&&&&&&&&&&&&&&&&&&&&&&&&&&&&&&&&&\\
&&&&&&&&&&&&&&&&&&&&&&&&&&&&&&&&&&&&&&&&&&&&&&&&*+<1pt>{}\ar @{-}@[ashy][-10,0]\ar @{--}@[ashy][6,-8] &&&&&&&&\\
&&&&&&&&&&&&&&&&&&&&&&&&&&&&&&&&&&&&&&&&&&&&&&&&&&&&&&&&\\
&&&&&&&&&&&&&&&&&&&&&&&&&&&&&&&&&&&&&&*+<1pt>{}\ar @{-}[-12,10]\ar @{-}[4,18] &&&&&&&&&&&&&&&&&&\\
&&&&&&&&&&&&&&&&&&&&&&&&&&&&&&&&&&&&&&&&&&&&&&&&&&&&&&&&\\
&&&&&&&&&&&&&&&&&&&&&&&&&&&&*+<1pt>{E_3}\ar @{-}@[ashy][-4,20] \ar @{-}[-12,10]\ar @{-}[4,18] &&&&&&&&&&&&&&&&&&&&&&&&&&&&\\
&&&&&&&&&&&&&&&&&&&&&&&&&&&&&&&&&&&&&&&&&&&&&&&&&&&&&&&&\\
&&&&&&&&&&&&&&&&&&*+<1pt>{} \ar @{-}[-12,10]\ar @{-}[4,18] \ar @{--}[4,2] &&&&&&&&&&&&&&&&&&&&&&&&&&&&&&&&&&&&&&*+<1pt>{}\ar @{-}@[ashy][-6,-8]\\
&&&&&&&&&&&&&&&&&&&&&&&&&&&&&&&&&&&&&&&&&&&&&&&&&&&&&&&&\\
&&&&&&&&*+<1pt>{}\ar @{-}@[ashy][-10,0]\ar @{-}@[ashy][-4,20]\ar @{-}@[ashy][6,8] \ar @{-}[-12,10]\ar @{-}[4,18]\ar @{-}[4,2] &&&&&&&&&&&&&&&&&&& &&&&&&&&&&&&&&&&&&&*+<1pt>{}\ar @{-}[-8,2]&&&&&&&&&&\\
&&&&&&&&&&&&&&&&&&&&&&&&&&&&&&&&&&&&&&&&&&&&&&&&&&&&&&&&\\
&&&&&&&&&*+<1pt>{}\ar @{--}[-4,9] &&&&&&&& &&&&&&&&&&&&&&&&&&&*+<1pt>{}\ar @{-}[-8,2]&&&&&&&&&&&&&&&&&&&&\\
&&&&&&&&&&&&&&&&&&&&&&&&&&&&&&&&&&&&&&&&&&&&&&&&&&&&&&&&\\
&&&&&&&&&&*+<1pt>{\bb{(\vw)}}\ar @{--}@[ashy][-6,30]\ar @{--}[-8,18] &&&&&&&&&&&&&&&&*+<1pt>{\bb{(\vx_2+3\vw)}}\ar @{-}@[ashy][-6,30]\ar @{-}[-8,2]&&&&&&&&&&&&&&&&&&&&&&&&&&&&&&\\
&&&&&&&&&&&&&&&&&&&&&&&&&&&&&&&&&&&&&&&&&&&&&&&&&&&&&&&&\\
*+<1pt>{\bb{(2\vw)}}\ar @{-}[-4,9]\ar @{-}@[ashy][-2,10]\ar @{-}@[ashy][-6,8]  &&&&&&&&&&&&&&&&*+<1pt>{\bb{(\vx_2+4\vw)}}\ar @{-}@[ashy][-2,10]\ar @{-}[-8,2]&&&&&&&&&&&&&&&&&&&&&&&&&&&&&&&&&&&&&&&&\\}
\xymatrix @=3.3pt @!0 {
&&&&&&&&&&&&&&&&&&&&&&&&&&&&&&&&&&&&&&&&&&&&&&&&*+<1pt>{}&&&&&&&\\
&&&&&&&&&&&&&&&&&&&&&&&&&&&&&&&&&&&&&&&&&&&&&&&&&&&&&&&&\\
&&&&&&&&&&&&&&&&&&&&&&&&&&&&&&&&&&&&&&*+<1pt>{}\ar @{-}[8,10] &&&&&&&&&&&&&&&\\
&&&&&&&&&&&&&&&&&&&&&&&&&&&&&&&&&&&&&&&&&&&&&&&&&&&&&&&&\\
&&&&&&&&&&&&&&&&&&&&&&&&&&&&*+<1pt>{\tb{(\vx_2-\vx_3)}}\ar @{-}[4,5]\ar @{-}@[ashy][-4,20] &&&&&&&&&&&&&&&&&&&&&&&&&&&&\\
&&&&&&&&&&&&&&&&&&&&&&&&&&&&&&&&&&&&&&&&&&&&&&&&&&&&&&&&\\
&&&&&&&&&&&&&&&&&&*+<1pt>{}\ar @{-}[8,10] &&&&&&&&&&&&&&&&&&&&&&&&& &&&&&&&&&&&&&\\
&&&&&&&&&&&&&&&&&&&&&&&&&&&&&&&&&&&&&&&&&&&&&&&&&&&&&&\\
&&&&&&&&*+<1pt>{\tb{(-\vx_2)}}\ar @{-}@[ashy][-4,20]\ar @{-}[8,10] &&&&&&&&&&&&&&&&&&&&&&&&&*+<1pt>{F_2}\ar @{-}[4,5]\ar @{-}[-6,5] &&&&&&&&&&&&&&&&&&&&&&&\\
&&&&&&&&&&&&&&&&&&&&&&&&&&&&&&&&&&&&&&&&&&&&&&&&&&&&&&\\
&&&&&&&&&&&&&&&&&&&&&&&&&&&&&&&&&&&&&&&&&&&&&&&&*+<1pt>{}\ar @{-}@[ashy][-10,0]\ar @{--}@[ashy][6,-8] &&&&&&&&\\
&&&&&&&&&&&&&&&&&&&&&&&&&&&&&&&&&&&&&&&&&&&&&&&&&&&&&&&&\\
&&&&&&&&&&&&&&&&&&&&&&&&&&&&&&&&&&&&&&*+<1pt>{} \ar @{-}[-12,10]\ar @{-}[4,18] &&&&&&&&&&&&&&&&&&\\
&&&&&&&&&&&&&&&&&&&&&&&&&&&&&&&&&&&&&&&&&&&&&&&&&&&&&&&&\\
&&&&&&&&&&&&&&&&&&&&&&&&&&&&*+<1pt>{} \ar @{-}[-6,5]\ar @{-}[4,18] &&&&&&&&&&&&&&&&&&&&&&&&&&&&\\
&&&&&&&&&&&&&&&&&&&&&&&&&&&&&&&&&&&&&&&&&&&&&&&&&&&&&&&&\\
&&&&&&&&&&&&&&&&&&*+<1pt>{}\ar @{--}[4,2] \ar @{-}[-12,10]\ar @{-}[4,18] &&&&&&&&&&&&&&&&&&&&&&&&&&&&&&&&&&&&&&*+<1pt>{}\ar @{-}@[ashy][-6,-8]\\
&&&&&&&&&&&&&&&&&&&&&&&&&&&&&&&&&&&&&&&&&&&&&&&&&&&&&&&&\\
&&&&&&&&*+<1pt>{}\ar @{-}@[ashy][-10,0]\ar @{-}@[ashy][-8,40]\ar @{-}@[ashy][6,8] \ar @{-}[-12,10]\ar @{-}[4,18]\ar @{-}[4,2] &&&&&&&&&&&&&&&&&&& &&&&&&&&&&&&&&&&&&&*+<1pt>{}\ar @{-}[-8,2]&&&&&&&&&&\\
&&&&&&&&&&&&&&&&&&&&&&&&&&&&&&&&&&&&&&&&&&&&&&&&&&&&&&&&\\
&&&&&&&&&*+<1pt>{}\ar @{--}[-4,9] &&&&&&&& &&&&&&&&&&&&&&&&&&&*+<1pt>{}\ar @{-}[-8,2]&&&&&&&&&&&&&&&&&&&&\\
&&&&&&&&&&&&&&&&&&&&&&&&&&&&&&&&&&&&&&&&&&&&&&&&&&&&&&&&\\
&&&&&&&&&&*+<1pt>{\bb{(\vw)}}\ar @{--}@[ashy][-6,30]\ar @{--}[-8,18]&&&&&&&&&&&&&&&&*+<1pt>{\bb{(\vx_2-\vx_1)}}\ar @{-}@[ashy][-6,30]\ar @{-}[-8,2]&&&&&&&&&&&&&&&&&&&&&&&&&&&&&&\\
&&&&&&&&&&&&&&&&&&&&&&&&&&&&&&&&&&&&&&&&&&&&&&&&&&&&&&&&\\
*+<1pt>{}\ar @{-}[-4,9]\ar @{-}@[ashy][-2,10]\ar @{-}@[ashy][-6,8]  &&&&&&&&&&&&&&&&*+<1pt>{}\ar @{-}@[ashy][-2,10]\ar @{-}[-8,2]&&&&&&&&&&&&&&&&&&&&&&&&&&&&&&&&&&&&&&&&\\}$}
\caption{$(2,3,3)$: Projective cover of $F_2$ and $E_3$}
\end{figure}

\subsection*{Weight type $(2,3,4)$}
The next figure yields the projective covers for the indecomposable vector bundles $E_2$, $E_3$, $E_4$ and $G_2$. For the projective cover of $E_4$ one has to combine the projective covers of $G_2$ and $\tau G_2$. By means of the reflection at the central horizontal axis $X\mapsto X(\vx_1-2\vx_3)$ one obtains the projective covers for the `missing' vector bundles $E_2(\vx_1-2\vx_3)$ and $E_3(\vx_1-2\vx_3)$.

\begin{figure}[H]
{\tiny
$$\xymatrix @=15pt @!0 {
*+<1pt>{\tb{(-\vx_1)}}\ar@{-}[ddddddrrrrrr]&&\ar@{-}[ddddddrrrrrr]&&\ar@{-}[ddddddrrrrrr]&&*+<1pt>{\tb{(-\vx_3)}}\ar@{-}[dddddrrrrr]&&\ar@{-}[dddrrr]&&\ar@{-}[dr]&\\
&&&&&&&&&&&\\
\ar@{-}[uurr]\ar@{-}[ddddrrrr]&&&&&&&&&&&\\
\ar@{-}[rrrrrrrrrrr]&&&&&&&&&&&\\
\ar@{-}[uuuurrrr]\ar@{-}[ddrr]&&&&&&&&&&&\\
&&&&&&&&&&&*+<1pt>{E_2}\\
\ar@{-}[uuuuuurrrrrr]&&\ar@{-}[uuuuuurrrrrr]&&*+<1pt>{\bb{(-\vx_2)}}\ar@{-}[uuuuuurrrrrr]&&\ar@{-}[uuuuurrrrr]&&\ar@{-}[uuurrr]&&*+<1pt>{\bb{(\vom)}}\ar@{-}[ur]&\\
}
$$}
{\tiny
$$
\newcommand{\tpc}[1]{\overset{#1}{\blacklozenge}}
\newcommand{\tpcd}[1]{\underset{#1}{\blacklozenge}}
\newcommand{\spc}[1]{\overset{#1}{\bigstar}}
\newcommand{\spcd}[1]{\underset{#1}{\bigstar}}
\xymatrix @=15pt @!0 {
 \tb{(\vx_2-\vx_3)}\ar@{-}[ddddddrrrrrr]&&\ar@{-}[ddddddrrrrrr]&& \tb{(-\vx_3)}\ar@{-}[ddddddrrrrrr]&& \tb{(\vx_2-\vx_1)}\ar@{-}[ddddrrrr]&&\ar@{-}[dddrrr]&&\ar@{-}[dr]&\\
&&&&&&&&&&&\\
\ar@{-}[uurr]\ar@{-}[ddddrrrr]&&&&&&&&&&&\\
\ar@{-}[rrrrrrrrrrr]&&&&&&&&&&&\\
\ar@{-}[uuuurrrr]\ar@{-}[ddrr]&&&&&&&&&&*+<1pt>{\tc{E_3}}\ar@{-}[ur]\ar@{-}[dr]&\\
&&&&&&&&&&&\\
\ar@{-}[uuuuuurrrrrr]&& \bb{(-\vx_2)}\ar@{-}[uuuuuurrrrrr]&& \bb{(\vx_3-\vx_1)}\ar@{-}[uuuuuurrrrrr]&&\ar@{-}[uuuuurrrrr]&&{ \bb{(\vw)}}\ar@{-}[uurr]&&\ar@{-}[ur]&\\
}
\quad
\xymatrix @=15pt @!0 {
\spc{(\vx_3+7\vw)}\ar@{-}[ddddddrrrrrr]&&\tpc{(-\vx_3)}\ar@{-}[ddddddrrrrrr]&&\ar@{-}[ddddddrrrrrr]\spc{(\vx_2-\vx_1)}&&\tpc{\quad(\vx_3-\vx_2)}\ar@{-}[dddrrr]&&\ar@{-}[dddrrr]&&\ar@{-}[dr]&\\
&&&&&&&&&&&\\
\ar@{-}[uurr]\ar@{-}[ddddrrrr]&&&&&&&&&&&\\
\ar@{-}[rrrrrrrr]&&&&&&&&*+<1pt>{\spc{\tau G_2}}\ar@{-}[r]& *+<1pt>{\tc{E_4}} \ar@{-}[ddrr] \ar@{-}[uurr]\ar@{-}[r]&*+<1pt>{\tpc{G_2}}\ar@{-}[r]&\\
\ar@{-}[uuuurrrr]\ar@{-}[ddrr]&&&&&&&&&&&\\
&&&&&&&&&&&\\
\spcd{(-\vx_2)}\ar@{-}[uuuuuurrrrrr]&&\tpcd{(\vx_3-\vx_1)}\ar@{-}[uuuuuurrrrrr]&&\ar@{-}[uuuuuurrrrrr]\spcd{\quad(\vx_2-2\vx_3)}&&\tpcd{(\vw)}\ar@{-}[uuurrr]&&\ar@{-}[uuurrr]&&\ar@{-}[ur]&\\
}$$}
\caption{$(2,3,4)$: Projective covers for $E_2$, $E_3$, $E_4$, $G_2$ and $\tau G_2$}
\end{figure}
By {\tiny $\bigstar$} (resp. {\tiny $\blacklozenge$}) we have marked the line bundle summands of the projective covers of $\tau G_2$ (resp. $G_2$); together they form the projective cover of $E_4$.

\subsection*{Weight type $(2,3,5)$}
\newcommand{\tpc}[1]{\overset{#1}{\blacklozenge}}
\newcommand{\tpcd}[1]{\underset{#1}{\blacklozenge}}
\newcommand{\spc}[1]{\overset{#1}{\bigstar}}
\newcommand{\spcd}[1]{\underset{#1}{\bigstar}}

\begin{center}
{\tiny
$\xymatrix @=12pt @!0 {
&\ar@{-}[dddddddrrrrrrr]&&\ar@{-}[dddddddrrrrrrr]&&\ar@{-}[dddddddrrrrrrr]&&\ar@{-}[dddddddrrrrrrr]&&\ar@{-}[dddddddrrrrrrr]&&\ar@{-}[dddddddrrrrrrr]&&\ar@{-}[dddddddrrrrrrr]&&\ar@{-}[dddddddrrrrrrr]&&\ar@{-}[dddddddrrrrrrr]&&\ar@{-}[dddddddrrrrrrr]&&\ar@{-}[7,7]&&\ar@{-}[6,6]&&\ar@{-}[4,4]&&\ar@{-}[2,2]&&\\
\ar@{-}[ur]\ar@{-}[ddddddrrrrrr]&&&&&&&&&&&&&&&&&&&&&&&&&&&&&\\
\ar@{-}[rrrrrrrrrrrrrrrrrrrrrrrrrrrr]&&&&&&&&&&&&&&&&&&&&&&&&&&&&&\\
\ar@{-}[uuurrr]\ar@{-}[ddddrrrr]&&&&&&&&&&&&&&&&&&&&&&&&&&&&&\\
&&&&&&&&&&&&&&&&&&&&&&&&&&&&&\\
\ar@{-}[uuuuurrrrr]\ar@{-}[ddrr]&&&&&&&&&&&&&&&&&&&&&&&&&&&&&\\
&&&&&&&&&&&&&&&&&&&&&&&&&&&&&*+<1pt>{\tc{E_2}}\\
\bb{(\vx_2-3\vw)}\ar@{-}[uuuuuuurrrrrrr]&&\ar@{-}[uuuuuuurrrrrrr]&&\ar@{-}[uuuuuuurrrrrrr]&&\ar@{-}[uuuuuuurrrrrrr]&&\ar@{-}[uuuuuuurrrrrrr]&&\bb{(-\vx_2)}\ar@{-}[uuuuuuurrrrrrr]&&\ar@{-}[uuuuuuurrrrrrr]&&\ar@{-}[uuuuuuurrrrrrr]&&\ar@{-}[uuuuuuurrrrrrr]&&\bb{(-\vx_3)}\ar@{-}[uuuuuuurrrrrrr]&&\ar@{-}[uuuuuuurrrrrrr]&&\ar@{-}[-7,7]&&\ar@{-}[-5,5]&&\ar@{-}[-3,3]&&\bb{(\vw)}\ar@{-}[-1,1]&\\
}$}
\end{center}
\begin{center}
{\tiny
$\xymatrix @=12pt @!0 {
&\ar@{-}[dddddddrrrrrrr]&&\ar@{-}[dddddddrrrrrrr]&&\ar@{-}[dddddddrrrrrrr]&&\ar@{-}[dddddddrrrrrrr]&&\ar@{-}[dddddddrrrrrrr]&&\ar@{-}[dddddddrrrrrrr]&&\ar@{-}[dddddddrrrrrrr]&&\ar@{-}[dddddddrrrrrrr]&&\ar@{-}[dddddddrrrrrrr]&&\ar@{-}[dddddddrrrrrrr]&&\ar@{-}[dddddddrrrrrrr]&&\ar@{-}[dddddrrrrr]&&\ar@{-}[dddrrr]&&\ar@{-}[dr]&\\
\ar@{-}[ur]\ar@{-}[ddddddrrrrrr]&&&&&&&&&&&&&&&&&&&&&&&&&&&&\\
\ar@{-}[rrrrrrrrrrrrrrrrrrrrrrrrrrrr]&&&&&&&&&&&&&&&&&&&&&&&&&&&&\\
\ar@{-}[uuurrr]\ar@{-}[ddddrrrr]&&&&&&&&&&&&&&&&&&&&&&&&&&&&\\
&&&&&&&&&&&&&&&&&&&&&&&&&&&&\\
\ar@{-}[uuuuurrrrr]\ar@{-}[ddrr]&&&&&&&&&&&&&&&&&&&&&&&&&&&&*+<1pt>{\tc{E_3}}\\
&&&&&&&&&&&&&&&&&&&&&&&&&&&&\\
\bb{(\vx_3-2\vx_2)}\ar@{-}[uuuuuuurrrrrrr]&&\ar@{-}[uuuuuuurrrrrrr]&&\ar@{-}[uuuuuuurrrrrrr]&&\ar@{-}[uuuuuuurrrrrrr]&&\bb{(-\vx_2)\quad}\ar@{-}[uuuuuuurrrrrrr]&&\bb{\quad(\vx_3-\vx_1)}\ar@{-}[uuuuuuurrrrrrr]&&\ar@{-}[uuuuuuurrrrrrr]&&\ar@{-}[uuuuuuurrrrrrr]&&\bb{(-\vx_3)\quad}\ar@{-}[uuuuuuurrrrrrr]&&\bb{\quad(\vx_2-\vx_1)}\ar@{-}[uuuuuuurrrrrrr]&&\ar@{-}[uuuuuuurrrrrrr]&&\ar@{-}[uuuuuurrrrrr]&&\ar@{-}[uuuurrrr]&&\bb{(\vw)}\ar@{-}[uurr]&&\\
}$
\newline
$\xymatrix @=12pt @!0 {
&\ar@{-}[dddddddrrrrrrr]&&\ar@{-}[dddddddrrrrrrr]&&\ar@{-}[dddddddrrrrrrr]&&\ar@{-}[dddddddrrrrrrr]&&\ar@{-}[dddddddrrrrrrr]&&\ar@{-}[dddddddrrrrrrr]&&\ar@{-}[dddddddrrrrrrr]&&\ar@{-}[dddddddrrrrrrr]&&\ar@{-}[dddddddrrrrrrr]&&\ar@{-}[dddddddrrrrrrr]&&\ar@{-}[dddddddrrrrrrr]&&\ar@{-}[ddddrrrr]&&\ar@{-}[dddrrr]&&\ar@{-}[dr]&\\
\ar@{-}[ur]\ar@{-}[ddddddrrrrrr]&&&&&&&&&&&&&&&&&&&&&&&&&&&&\\
\ar@{-}[rrrrrrrrrrrrrrrrrrrrrrrrrrrr]&&&&&&&&&&&&&&&&&&&&&&&&&&&&\\
\ar@{-}[uuurrr]\ar@{-}[ddddrrrr]&&&&&&&&&&&&&&&&&&&&&&&&&&&&\\
&&&&&&&&&&&&&&&&&&&&&&&&&&&*+<1pt>{\tc{E_4}}\ar@{-}[ru]\ar@{-}[rd]&\\
\ar@{-}[uuuuurrrrr]\ar@{-}[ddrr]&&&&&&&&&&&&&&&&&&&&&&&&&&&&\\
&&&&&&&&&&&&&&&&&&&&&&&&&&&&\\
\bb{  (\vw-2\vx_3)}\ar@{-}[uuuuuuurrrrrrr]&&\ar@{-}[uuuuuuurrrrrrr]&&\ar@{-}[uuuuuuurrrrrrr]&& \bb{  (-\vx_2)\quad}\ar@{-}[uuuuuuurrrrrrr]&& \bb{  (\vx_3-\vx_1)\ }\ar@{-}[uuuuuuurrrrrrr] && \bb{\qquad  (\vx_2-3\vx_3)}\ar@{-}[uuuuuuurrrrrrr]&&\ar@{-}[uuuuuuurrrrrrr]&& \bb{  (-\vx_3)\quad}\ar@{-}[uuuuuuurrrrrrr]&& \bb{  (\vx_2-\vx_1)}\ar@{-}[uuuuuuurrrrrrr]&& \bb{\qquad  (\vx_3-\vx_2)}\ar@{-}[uuuuuuurrrrrrr]&&\ar@{-}[uuuuuuurrrrrrr]&&\ar@{-}[uuuuuurrrrrr]&& \bb{  (\vw)}\ar@{-}[uuurrr]&&\ar@{-}[uurr]&&\\}$
$$\xymatrix @=12pt @!0 {
&\ar@{-}[dddddddrrrrrrr]&&\ar@{-}[dddddddrrrrrrr]&&\ar@{-}[dddddddrrrrrrr]&&\ar@{-}[dddddddrrrrrrr]&&\ar@{-}[dddddddrrrrrrr]&&\ar@{-}[dddddddrrrrrrr]&&\ar@{-}[dddddddrrrrrrr]&&\ar@{-}[dddddddrrrrrrr]&&\ar@{-}[dddddddrrrrrrr]&&\ar@{-}[dddddddrrrrrrr]&&\ar@{-}[dddddddrrrrrrr]&&\ar@{-}[dddrrr]&&\ar@{-}[dddrrr]&&\ar@{-}[dr]&\\
\ar@{-}[ur]\ar@{-}[ddddddrrrrrr]&&&&&&&&&&&&&&&&&&&&&&&&&&&&\\
\ar@{-}[rrrrrrrrrrrrrrrrrrrrrrrrrrrr]&&&&&&&&&&&&&&&&&&&&&&&&&&&&\\
\ar@{-}[uuurrr]\ar@{-}[ddddrrrr]&&&&&&&&&&&&&&&&&&&&&&&&&&*+<1pt>{\tc{E_5}}\ar@{-}[ddrr]\ar@{-}[uurr]&&\\
&&&&&&&&&&&&&&&&&&&&&&&&&&&&\\
\ar@{-}[uuuuurrrrr]\ar@{-}[ddrr]&&&&&&&&&&&&&&&&&&&&&&&&&&&&\\
&&&&&&&&&&&&&&&&&&&&&&&&&&&&\\
 \bb{ (\vw-2\vx_3)}\ar@{-}[uuuuuuurrrrrrr]&&\ar@{-}[uuuuuuurrrrrrr]&& \bb{ (\vw-\vx_2)}\ar@{-}[uuuuuuurrrrrrr]&&*+<1pt>{ \tb{ (-\vx_2)}}\ar@{-}[uuuuuuurrrrrrr]&& \bb{ (\vx_3-\vx_1) }\ar@{-}[uuuuuuurrrrrrr] &&*+<1pt>{ \tb{ (\vx_2-3\vx_3)}}\ar@{-}[uuuuuuurrrrrrr]&& \bb{ (\vw-\vx_3)}\ar@{-}[uuuuuuurrrrrrr]&&*+<1pt>{ \tb{\quad (-\vx_3)}}\ar@{-}[uuuuuuurrrrrrr]&& \bb{ (\vx_2-\vx_1)}\ar@{-}[uuuuuuurrrrrrr]&&*+<1pt>{ \tb{ (\vx_3-\vx_2)}}\ar@{-}[uuuuuuurrrrrrr]&&\ar@{-}[uuuuuuurrrrrrr]&& \bb{ (\vx_1-3\vx_3)}\ar@{-}[uuuurrrr]&&\ar@{-}[uuuurrrr]&&\ar@{-}[uurr]&&\\
}$$

$$\xymatrix @=12pt @!0 {
&\ar@{-}[dddddddrrrrrrr]&&\ar@{-}[dddddddrrrrrrr]&&\ar@{-}[dddddddrrrrrrr]&&\ar@{-}[dddddddrrrrrrr]&&\ar@{-}[dddddddrrrrrrr]&&\ar@{-}[dddddddrrrrrrr]&&\ar@{-}[dddddddrrrrrrr]&&\ar@{-}[dddddddrrrrrrr]&&\ar@{-}[dddddddrrrrrrr]&&\ar@{-}[dddddddrrrrrrr]&&\ar@{-}[dddddddrrrrrrr]&&\ar@{-}[ddrr]&&\ar@{-}[dddrrr]&&\ar@{-}[dr]&\\
\ar@{-}[ur]\ar@{-}[ddddddrrrrrr]&&&&&&&&&&&&&&&&&&&&&&&&&&&&\\
\ar@{-}[rrrrrrrrrrrrrrrrrrrrrrrr]&&&&&&&&&&&&&&&&&&&&&&&&*+<1pt>{\tpc{G_3}}\ar@{-}[r]&*+<1pt>{\tc{E_6}}\ar@{-}[dddrrr]\ar@{-}[uurr]\ar@{-}[r]&*+<1pt>{\spc{\tauX G_3}}\ar@{-}[rr]&&\\
\ar@{-}[uuurrr]\ar@{-}[ddddrrrr]&&&&&&&&&&&&&&&&&&&&&&&&&&&&\\
&&&&&&&&&&&&&&&&&&&&&&&&&&&&\\
\ar@{-}[uuuuurrrrr]\ar@{-}[ddrr]&&&&&&&&&&&&&&&&&&&&&&&&&&&&\\
&&&&&&&&&&&&&&&&&&&&&&&&&&&&\\
\spcd{ (-2\vx_3)}\ar@{-}[uuuuuuurrrrrrr]&&*+<1pt>{\tpc{ (\vw-\vx_2)}}\ar@{-}[uuuuuuurrrrrrr]&&\spcd{ (-\vx_2)}\ar@{-}[uuuuuuurrrrrrr]&&*+<1pt>{\tpc{ (\vx_3-\vx_1)}}\ar@{-}[uuuuuuurrrrrrr]&&\spcd{ (\vx_2-3\vx_3)}\ar@{-}[uuuuuuurrrrrrr] &&*+<1pt>{\tpc{\spcd{ (\vw-\vx_3)}}}\ar@{-}[uuuuuuurrrrrrr]&&\spcd{ (-\vx_3)}\ar@{-}[uuuuuuurrrrrrr]&&*+<1pt>{\tpc{ (\vx_2-\vx_1)}}\ar@{-}[uuuuuuurrrrrrr]&&\spcd{ (\vx_3-\vx_2)}\ar@{-}[uuuuuuurrrrrrr]&&*+<1pt>{\tpc{ (\vx_1-3\vx_3)}}\ar@{-}[uuuuuuurrrrrrr]&&\spcd{ (\vx_1-3\vx_3)}\ar@{-}[uuuuurrrrr]&&\ar@{-}[uuuuuurrrrrr]&&\ar@{-}[uuuurrrr]&&\ar@{-}[uurr]&&\\
}$$

$$\xymatrix @=12pt @!0 {
&\ar@{-}[dddddddrrrrrrr]&&\ar@{-}[dddddddrrrrrrr]&&\ar@{-}[dddddddrrrrrrr]&&\ar@{-}[dddddddrrrrrrr]&&\ar@{-}[dddddddrrrrrrr]&&\ar@{-}[dddddddrrrrrrr]&&\ar@{-}[dddddddrrrrrrr]&&\ar@{-}[dddddddrrrrrrr]&&\ar@{-}[dddddddrrrrrrr]&&\ar@{-}[dddddddrrrrrrr]&&\ar@{-}[dddddddrrrrrrr]&&\ar@{-}[dr]&&*+<1pt>{\tc{F_2}}\ar@{-}[dddrrr]&&\ar@{-}[dr]&\\
\ar@{-}[ur]\ar@{-}[ddddddrrrrrr]&&&&&&&&&&&&&&&&&&&&&&&&*+<1pt>{}\ar@{-}[ddddrrrr]\ar@{-}[ur]&&&&\\
\ar@{-}[rrrrrrrrrrrrrrrrrrrrrrrrrrrr]&&&&&&&&&&&&&&&&&&&&&&&&&&&&\\
\ar@{-}[uuurrr]\ar@{-}[ddddrrrr]&&&&&&&&&&&&&&&&&&&&&&&&&&&&\\
&&&&&&&&&&&&&&&&&&&&&&&&&&&&\\
\ar@{-}[uuuuurrrrr]\ar@{-}[ddrr]&&&&&&&&&&&&&&&&&&&&&&&&&&&&\\
&&&&&&&&&&&&&&&&&&&&&&&&&&&&\\
\ar@{-}[uuuuuuurrrrrrr]&& \bb{ (\vw-\vx_2)}\ar@{-}[uuuuuuurrrrrrr]&&\ar@{-}[uuuuuuurrrrrrr]&&\ar@{-}[uuuuuuurrrrrrr]&& \bb{ (\vx_2-3\vx_3)\ }\ar@{-}[uuuuuuurrrrrrr] &&\ar@{-}[uuuuuuurrrrrrr]&& \bb{(-\vx_3)} \ar@{-}[uuuuuuurrrrrrr]&&\ar@{-}[uuuuuuurrrrrrr]&&\ar@{-}[uuuuuuurrrrrrr]&& \bb{(\vx_3-\vx_2)}\ar@{-}[uuuuuurrrrrr]&&\ar@{-}[uuuuuuurrrrrrr]&&\ar@{-}[uuuuuurrrrrr]&&\ar@{-}[uuuurrrr]&&\ar@{-}[uurr]&&\\
}$$

$$\xymatrix @=12pt @!0 {
&\ar@{-}[dddddddrrrrrrr]&&\ar@{-}[dddddddrrrrrrr]&&\ar@{-}[dddddddrrrrrrr]&&\ar@{-}[dddddddrrrrrrr]&&\ar@{-}[dddddddrrrrrrr]&&\ar@{-}[dddddddrrrrrrr]&&\ar@{-}[dddddddrrrrrrr]&&\ar@{-}[dddddddrrrrrrr]&&\ar@{-}[dddddddrrrrrrr]&&\ar@{-}[dddddddrrrrrrr]&&\ar@{-}[dddddddrrrrrrr]&&\ar@{-}[dr]&&\ar@{-}[dddrrr]&&\ar@{-}[dr]&\\
\ar@{-}[ur]\ar@{-}[ddddddrrrrrr]&&&&&&&&&&&&&&&&&&&&&&&&*+<1pt>{\tc{F_4}}\ar@{-}[ddddrrrr]\ar@{-}[ur]&&&&\\
\ar@{-}[rrrrrrrrrrrrrrrrrrrrrrrrrrrr]&&&&&&&&&&&&&&&&&&&&&&&&&&&&\\
\ar@{-}[uuurrr]\ar@{-}[ddddrrrr]&&&&&&&&&&&&&&&&&&&&&&&&&&&&\\
&&&&&&&&&&&&&&&&&&&&&&&&&&&&\\
\ar@{-}[uuuuurrrrr]\ar@{-}[ddrr]&&&&&&&&&&&&&&&&&&&&&&&&&&&&\\
&&&&&&&&&&&&&&&&&&&&&&&&&&&&\\
 \bb{ (-2\vx_3)}\ar@{-}[uuuuuuurrrrrrr]&& \bb{\qquad (\vw-\vx_2)}\ar@{-}[uuuuuuurrrrrrr]&&\ar@{-}[uuuuuuurrrrrrr]&& \bb{ (\vx_3-\vx_1)\qquad\quad}\ar@{-}[uuuuuuurrrrrrr]&& \bb{ (\vx_2-3\vx_3)\ }\ar@{-}[uuuuuuurrrrrrr] && \bb{\qquad (\vw-\vx_3)}\ar@{-}[uuuuuuurrrrrrr]&& \bb{\ \qquad  (-\vx_3)} \ar@{-}[uuuuuuurrrrrrr]&&\ar@{-}[uuuuuuurrrrrrr]&& \bb{ (\vx_3-\vx_1)}\ar@{-}[uuuuuuurrrrrrr]&& \bb{\qquad (\vx_3-\vx_2)}\ar@{-}[uuuuuurrrrrr]&&\ar@{-}[uuuuuuurrrrrrr]&&\ar@{-}[uuuuuurrrrrr]&&\ar@{-}[uuuurrrr]&&\ar@{-}[uurr]&&\\
}$$}
\end{center}

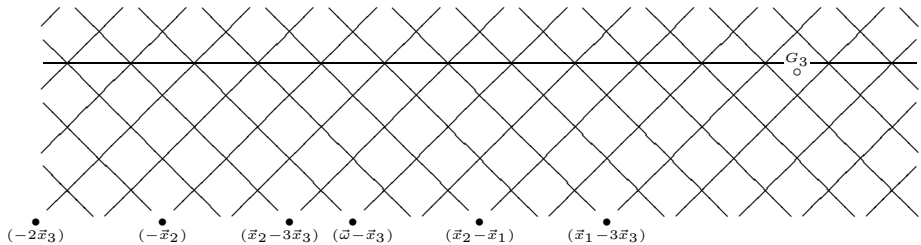
\begin{figure}[H]
\tiny{
$$\xymatrix @=12pt @!0 {
&\ar@{-}[dddddddrrrrrrr]&&\ar@{-}[dddddddrrrrrrr]&&\ar@{-}[dddddddrrrrrrr]&&\ar@{-}[dddddddrrrrrrr]&&\ar@{-}[dddddddrrrrrrr]&&\ar@{-}[dddddddrrrrrrr]&&\ar@{-}[dddddddrrrrrrr]&&\ar@{-}[dddddddrrrrrrr]&&\ar@{-}[dddddddrrrrrrr]&&\ar@{-}[dddddddrrrrrrr]&&\ar@{-}[dddddddrrrrrrr]&&\ar@{-}[dddddrrrrr]&&\ar@{-}[dddrrr]&&\ar@{-}[dr]&\\
\ar@{-}[ur]\ar@{-}[ddddddrrrrrr]&&&&&&&&&&&&&&&&&&&&&&&&&&&&\\
\ar@{-}[rrrrrrrrrrrrrrrrrrrrrrrr]&&&&&&&&&&&&&&&&&&&&&&&&*+<1pt>{\tc{G_3}}\ar@{-}[rrrr]&&&&\\
\ar@{-}[uuurrr]\ar@{-}[ddddrrrr]&&&&&&&&&&&&&&&&&&&&&&&&&&&&\\
&&&&&&&&&&&&&&&&&&&&&&&&&&&&\\
\ar@{-}[uuuuurrrrr]\ar@{-}[ddrr]&&&&&&&&&&&&&&&&&&&&&&&&&&&&\\
&&&&&&&&&&&&&&&&&&&&&&&&&&&&\\
\bb{ (-2\vx_3)}\ar@{-}[uuuuuuurrrrrrr]&&\ar@{-}[uuuuuuurrrrrrr]&&\bb{ (-\vx_2)}\ar@{-}[uuuuuuurrrrrrr]&&\ar@{-}[uuuuuuurrrrrrr]&&\bb{ (\vx_2-3\vx_3)\quad}\ar@{-}[uuuuuuurrrrrrr]&&\bb{\quad (\vw-\vx_3)}\ar@{-}[uuuuuuurrrrrrr]&&\ar@{-}[uuuuuuurrrrrrr]&&\bb{ (\vx_2-\vx_1)}\ar@{-}[uuuuuuurrrrrrr]&&\ar@{-}[uuuuuuurrrrrrr]&&\bb{ (\vx_1-3\vx_3)}\ar@{-}[uuuuuuurrrrrrr]&&\ar@{-}[uuuuuuurrrrrrr]&&\ar@{-}[uuuuuurrrrrr]&&\ar@{-}[uuuurrrr]&&\ar@{-}[uurr]&&\\
}$$}
\caption{$(2,3,5)$: Projective covers for $E_2$, $E_3$, $E_4$, $E_5$, $E_6$, $F_2$, $F_4$ and  $G_3$}
\end{figure}

\section*{Acknowledgements}
The first author was supported by the Research Project from the MNiSW specific subsidy for young scientists on the  Department of Mathematics and Physics of Szczecin University 504--4000--240--835.

The third author was supported by the Polish Scientific Grant Narodowe Centrum
Nauki DEC-2011/01/B/ST1/06469.

The authors further want to thank the referee for helpful suggestions and a critical checking of the manuscript.

\normalsize

\def\cprime{$'$} \def\cprime{$'$} \def\cprime{$'$}
\providecommand{\bysame}{\leavevmode\hbox to3em{\hrulefill}\thinspace}
\providecommand{\MR}{\relax\ifhmode\unskip\space\fi MR }
\providecommand{\MRhref}[2]{%
  \href{http://www.ams.org/mathscinet-getitem?mr=#1}{#2}
}
\providecommand{\href}[2]{#2}

\end{document}